\newtheorem{THM}{Theorem}[section]
\newtheorem{LEM}[THM]{Lemma}
\newtheorem{PROP}[THM]{Proposition}
\newtheorem{COR}[THM]{Corollary}
\newtheorem{DEF}[THM]{Definition}
\newcommand{\Def}[2]
{
\begin{DEF}[#1]
\emph{#2}
\end{DEF}
}
\numberwithin{equation}{section}
\newcommand{\mat}[3]
{
\begin{pmatrix}
\begin{smallmatrix}
1 & {#2} & {#3} \\
0 & 1 & {#1} \\
0 & 0 & 1
\end{smallmatrix}
\end{pmatrix}
}
\newcommand{\liemat}[3]
{
\begin{pmatrix}
\begin{smallmatrix}
0 & {#2} & {#3} \\
0 & 0 & {#1} \\
0 & 0 & 0
\end{smallmatrix}
\end{pmatrix}
}
\newcommand{\bmat}[3]
{
\begin{pmatrix}
1 & {#2} & {#3} \\
0 & 1 & {#1} \\
0 & 0 & 1
\end{pmatrix}
}
\newcommand{\Span}{\mathrm{Span}}
\newcommand{\Nil}{\Gamma \backslash G}
\newcommand{\TNil}{\mathbb{T} \times\Gamma \backslash G}
\newcommand{\lb}{\left(}
\newcommand{\rb}{\right)}
\begin{document}
\title[M\"{o}bius disjointness for skew products on a circle and a nilmanifold]
{M\"{o}bius disjointness for skew products \\ on a circle and a nilmanifold}
\author{Wen Huang, Jianya Liu \& Ke Wang}

\address{Wu Wen-Tsun Key Laboratory of Mathematics, USTC, Chinese Academy
of Sciences \& Department of Mathematics
\\ University of Science and Technology
of China
\\
Hefei\\
 Anhui 230026
\\China}
\email{wenh@mail.ustc.edu.cn}

\address{School of Mathematics
\\
Shandong University
\\
Jinan
\\
Shandong 250100
\\
China}
\email{jyliu@sdu.edu.cn}

\address{School of Mathematics
\\
Shandong University
\\
Jinan
\\
Shandong 250100
\\
China}
\email{wangkesdu@gmail.com}

\date{\today}

\subjclass[2000]{37A45, 11L03, 11N37}
\keywords{The M\"obius function, distal flow, skew product, nilmanifold, measure complexity}

\begin{abstract}
Let $\mathbb{T}$ be the unit circle and $\Nil$ the $3$-dimensional Heisenberg nilmanifold.
We prove that a class of skew products on $\mathbb{T} \times \Nil$ are distal, and that
the M\"{o}bius function is linearly disjoint from these skew products.
This verifies the M\"{o}bius Disjointness Conjecture of Sarnak.
\end{abstract}

\maketitle
{\small\tableofcontents}

\section{Introduction}

Let $\mu(n)$ be the M\"obius function, that
is $\mu(n)$  is $0$ if $n$ is not square-free, and is $(-1)^k$ if  $n$ is a
product of $k$ distinct primes. The behavior of $\mu$ is central in the theory of prime numbers.
Let $(X, T)$ be a flow, namely $X$ is a compact metric space
and $T: X\to X$ a continuous map. We say that $\mu$ is linearly
disjoint from $(X, T)$ if
\begin{eqnarray}\label{def/DISJO}
\lim_{N\to\infty} \frac{1}{N}\sum_{n\leq N} \mu(n)f(T^n x)=0
\end{eqnarray}
for any $f\in C(X)$ and any $x\in X$.
The M\"{o}bius Disjointness Conjecture of Sarnak \cite{Sar09} \cite{Sar12}
states that the function $\mu$ is linearly disjoint from
every $(X, T)$ whose entropy is $0$.
This conjecture has been proved for many cases, and we refer to the survey paper
\cite{FKL18} for recent progresses.
An incomplete list for works related to the present paper is: Bourgain \cite{Bou13},
Green-Tao \cite{GT12b}, Liu-Sarnak \cite{LS15, LS17}, Wang \cite{Wan17},
Peckner \cite{Pec18}, Huang-Wang-Ye \cite{HWY19}, and Litman-Wang \cite{LW19}.

Distal flows are typical examples of zero-entropy flows; see Parry \cite{Par68}.
A flow $(X,T)$ with a compatible metric $d$ is called {\it distal} if
$$
\inf_{n \geq 0}d(T^n x, T^n y)>0
$$
whenever $x \neq y$. According to Furstenberg's structure theorem of minimal distal flows \cite{Fus63},
skew products are building blocks of distal flows.

An example of distal flow is the skew product $T$ on the $2$-torus
$\mathbb{T}^2 = (\mathbb{R}/\mathbb{Z})^2$ given by
\begin{equation}\label{1.T2SKEWPRODUCT}
T: (x,y) \mapsto (x+\alpha,y+h(x)),
\end{equation}
where $\alpha \in [0,1)$ and $h \colon \mathbb{T} \rightarrow \mathbb{R}$ is a continuous
function. For dynamical properties of this skew product, see for example Furstenberg \cite{Fus61}.
The M\"{o}bius disjointness for the skew product (\ref{1.T2SKEWPRODUCT})
was first studied by Liu and Sarnak in \cite{LS15, LS17}. A result in \cite{LS15} states that,
if $h$ is analytic with an additional assumption on its Fourier coefficients,
then the M\"{o}bius Disjointness Conjecture is true for the skew product $(\mathbb{T}^2, T)$.
This result holds for all $\alpha$, as is not common in the KAM theory.
The aforementioned additional assumption was removed in Wang \cite{Wan17}.
It has been further generalized by Huang, Wang and Ye in \cite{HWY19}
to the case that $h(x)$ is $C^{\infty}$-smooth.

Another example of distal flow is nilsystem. Let $G$ be a nilpotent
Lie group with a discrete cocomapct subgroup $\Gamma$. The group $G$ acts in a natural
way on the homogeneous space $\Gamma \backslash G$. Fix $h \in G$. Then the
transformation $T$ given by $T(\Gamma g)=\Gamma g h$ makes $(\Nil,T)$ a nilsystem.
The M\"{o}bius Disjointness Conjecture for these nilsystems was proved by Green and Tao in \cite{GT12b}.

\medskip

Now let $G$ be the $3$-dimensional Heisenberg group with the cocompact discrete subgroup $\Gamma$, namely
\begin{equation}
G=\mat{\mathbb{R}}{\mathbb{R}}{\mathbb{R}}, \quad
\Gamma=\mat{\mathbb{Z}}{\mathbb{Z}}{\mathbb{Z}}.
\end{equation}
Then $\Nil$ is the $3$-dimensional Heisenberg nilmanifold. Let $\mathbb{T}$ be the unit circle.
This paper is devoted to the M\"{o}bius Disjointness Conjecture for skew products on $\mathbb{T} \times \Nil$,
and the main result is as follows.

\begin{THM}\label{1*.MT}
Let $\mathbb{T}$ be the unit circle and $\Nil$ the $3$-dimensional Heisenberg nilmanifold.
Let $\alpha \in [0,1)$ and let $\varphi, \psi$ be $C^{\infty}$-smooth periodic functions from
$\mathbb{R}$ to $\mathbb{R}$ with period $1$ such that
\begin{equation}\label{1.ASSMFORphi}
\int_{0}^{1} \varphi(t) \, \mathrm{d}t = 0.
\end{equation}
Let the skew product $T$ on $\TNil$ be given by
\begin{equation}\label{1.TForm}
T: (t,\Gamma g) \mapsto \lb t+\alpha, \Gamma g \bmat{\varphi(t)}{\varphi(t)}{\psi(t)}\rb.
\end{equation}
Then, for any $(t_0,\Gamma g_0) \in \TNil$ and any $f \in C(\TNil)$,
\begin{equation}\label{1.Main}
\lim_{N \rightarrow \infty} \frac{1}{N}\sum_{n=1}^N \mu(n)f(T^n(t_0,\Gamma g_0))=0.
\end{equation}
\end{THM}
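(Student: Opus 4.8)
The plan is to deduce the theorem from the measure-complexity criterion of Huang--Wang--Ye: a flow of zero topological entropy whose \emph{measure complexity is sub-polynomial} --- that is, for every $T$-invariant Borel probability measure $\nu$ and every $\varepsilon>0$ the minimal number of $\bar d_n$-balls of radius $\varepsilon$ needed to cover $\TNil$ up to an $\varepsilon$-fraction of $\nu$-mass grows sub-polynomially in $n$ (along a suitable sequence $n\to\infty$), where $\bar d_n(x,y)=\frac1n\sum_{k<n}d(T^kx,T^ky)$ --- is linearly disjoint from $\mu$. So the theorem reduces to checking these two properties for the skew product $T$ of \eqref{1.TForm}. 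When $\alpha=p/q\in\mathbb Q$ (in lowest terms), the first coordinate is $T^q$-invariant and $T^q$ acts on each fibre $\{t\}\times\Nil$ as the Heisenberg nilsystem $\Gamma g\mapsto\Gamma g\,B_q(t)$ with $B_q(t)=\prod_{j=0}^{q-1}\bmat{\varphi(t+jp/q)}{\varphi(t+jp/q)}{\psi(t+jp/q)}$; hence \eqref{1.Main} follows from the theorem of Green--Tao \cite{GT12b} for nilsystems together with the standard passage from $T^q$ to $T$ along residue classes. From now on $\alpha$ is irrational.

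Distality, and hence zero entropy, comes from the filtration of $G$. Let $Z$ be the centre of $G$. Then $\TNil$ projects $T$-equivariantly onto $\mathbb T\times(\Gamma Z\backslash G)\cong\mathbb T\times\mathbb T^2$, on which $T$ induces $\bar T(t,x,y)=(t+\alpha,\,x+\varphi(t),\,y+\varphi(t))$ --- a translation (so isometric) extension of the rotation by $\alpha$ --- and the projection $\TNil\to\mathbb T\times\mathbb T^2$ is a principal circle bundle with fibre $Z/(\Gamma\cap Z)$, with $T$ commuting with the fibrewise circle action, i.e.\ an isometric extension. A tower of isometric extensions of a rotation is distal and has zero entropy, which gives the first of the two required properties and moreover shows that every $T$-invariant measure projects to Lebesgue measure on $\mathbb T$ (the base being uniquely ergodic).

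The heart of the matter is the measure-complexity bound, for which we imitate and extend the argument of Huang--Wang--Ye \cite{HWY19} for $C^\infty$ skew products on $\mathbb T^2$. Writing $B_k(t)=\prod_{j=0}^{k-1}\bmat{\varphi(t+j\alpha)}{\varphi(t+j\alpha)}{\psi(t+j\alpha)}$, a direct computation in $G$ gives $B_k(t)=\bmat{A_k(t)}{A_k(t)}{C_k(t)}$ with $A_k(t)=\sum_{j<k}\varphi(t+j\alpha)$ and $C_k(t)=\sum_{j<k}\psi(t+j\alpha)+\tfrac12A_k(t)^2-\tfrac12\sum_{j<k}\varphi(t+j\alpha)^2$, and a short conjugation identity ($B_k^{-1}\,\bmat{a}{b}{c}\,B_k=\bmat{a}{b}{c+A_k(t)(b-a)}$) shows that the $\bar d_n$-distance between two points with the same $\mathbb T$-coordinate is governed, modulo $1$, by $\frac1n\sum_{k<n}\bigl\|\,\delta_c+A_k(t)\,\delta\,\bigr\|$, where $\delta$ is the difference of the "$b-a$" fibre coordinates and $\delta_c$ that of the centre coordinates. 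Since $\varphi$ has mean zero by \eqref{1.ASSMFORphi}, the Denjoy--Koksma inequality makes $A_q(t)$ and the fluctuating parts of $\sum_{j<q}\psi(t+j\alpha)$ and of $\sum_{j<q}\varphi(t+j\alpha)^2$ uniformly bounded whenever $q$ is a convergent denominator of $\alpha$, the mean terms $k\!\int_0^1\!\psi$ and $-\tfrac k2\!\int_0^1\!\varphi^2$ contribute only a rotation on the centre circle, and the quadratic term $\tfrac12A_k(t)^2$ is then controlled by the bound for $A_k(t)$. Combining these fibrewise estimates with an $\varepsilon$-net of the base $\mathbb T$ and of the $\mathbb T^2$-factor, and handling the variation in $t$ of $A_k(t)$ and $C_k(t)$ exactly as in \cite{HWY19} (using that $\varphi'$ again has mean zero), yields, along the sequence $n=q_i\to\infty$ of convergent denominators, a covering of $\TNil$ by sub-polynomially many $\bar d_{q_i}$-balls, uniformly over all $T$-invariant measures; this is the complexity bound needed to invoke the criterion.

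The main obstacle is precisely the quadratic term $\tfrac12A_k(t)^2$ in $C_k(t)$, which is the new feature relative to the $\mathbb T^2$ case: the hypothesis $\int_0^1\varphi=0$ forces $A_k(t)=o(k)$, so $A_k(t)^2=o(k^2)$ rather than growing like $k^2$ (as it would for $\int_0^1\varphi\neq0$, producing skew-shift-type complexity), but when $\alpha$ is Liouville $A_k(t)$ can still be moderately large for $k$ between consecutive convergent denominators, and squaring amplifies this. Controlling it requires a careful Ostrowski-expansion bookkeeping of $A_k(t)$ on the scales dictated by the continued fraction of $\alpha$, carried out simultaneously with the control of the two circle factors, so that the contribution of the centre coordinate to the measure complexity stays of the same sub-polynomial order as that coming from the base --- which is exactly what is needed to apply the Huang--Wang--Ye criterion and conclude \eqref{1.Main}.
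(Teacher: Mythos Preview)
Your overall architecture is right --- invoke the Huang--Wang--Ye measure-complexity criterion for irrational $\alpha$ and a separate argument for rational $\alpha$ --- and your rational-$\alpha$ reduction to Green--Tao is valid and in fact cleaner than the paper's route, which instead expands $f$ along the explicit spanning set of Proposition~\ref{2*.MP} and then appeals to Hua's exponential-sum estimate (Lemma~\ref{3.DavLM}) for the part coming from $\mathcal{A}$.

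For irrational $\alpha$, however, there is a genuine gap. You propose to control the Birkhoff sums $A_k(t)=\sum_{j<k}\varphi(t+j\alpha)$ (and the analogous sums for $\psi$ and $\varphi^2$) via Denjoy--Koksma at convergent denominators $q_i$, with ``Ostrowski bookkeeping'' in between, and then run the complexity estimate along $n=q_i$. Denjoy--Koksma only gives the uniform \emph{bound} $|A_{q_i}(t)|\le V(\varphi)$; for intermediate $m$ one gets $|A_m|\lesssim c\,V(\varphi)$ where $c$ is the top Ostrowski digit, which can be as large as $a_i$ and hence unbounded when $\alpha$ is Liouville. Since the fibre distance after $m$ steps carries a factor $A_m(t)\cdot(\text{coordinate differences in }\Nil)$ (your own conjugation identity), the net on $\Nil$ must be refined to mesh $\asymp\varepsilon/\sup_{m<n}|A_m|$, so the ball count grows like a positive power of $n$ and $s_n/n^\tau\not\to0$ for small $\tau$. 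The paper overcomes this by an idea you are missing: for each $\tau$ it chooses $B=B(\tau)$, splits the Fourier expansions of $\varphi,\varphi^2,\psi$ into \emph{resonant} (frequencies in $M_1(B)$) and \emph{non-resonant} parts, \emph{solves the cohomological equation for the non-resonant part} (Lemma~\ref{4*.M2CONV}), and conjugates $T$ by the resulting continuous map $S$ of \eqref{6.DEFS} to a skew product $T_1$ built from the resonant parts alone --- the simplification \eqref{6/SimPi} is precisely where the hypothesis that the two off-diagonal entries in \eqref{1.TForm} coincide is used. For the resonant sums one then has the \emph{decay} $|\Phi_{q_k}(t)|\le C_1q_k^{-B+1}$ of Lemma~\ref{4*.LEMBDPHI}, not mere boundedness; this is what permits taking $n_k=q_k^{B-1}$ rather than $q_k$, so that the polynomial net size $\asymp q_k^{7}$ is indeed $o(n_k^\tau)$. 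Your appeal to ``exactly as in \cite{HWY19}'' does not rescue the argument: the $\mathbb{T}^2$ proof in \cite{HWY19} already relies on the same resonant/non-resonant split and conjugation, not on raw Denjoy--Koksma. (A minor point: the criterion of Lemma~\ref{5*.HWYTM} does not require zero entropy as a separate hypothesis, so your distality paragraph, while correct, is not needed for the proof.)
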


Note that Theorem \ref{1*.MT} holds for all $\alpha$.
The flow $(\TNil, T)$ in Theorem \ref{1*.MT} is distal, as is implied in
Theorem \ref{Thm/Distal} of the present paper. Thus Theorem \ref{1*.MT} verifies
the M\"{o}bius Disjointness Conjecture in this context.

To prove this theorem, we first construct in Section 2 a subset ${\mathcal F}\subset C(\TNil)$
which spans a $\mathbb{C}$-linear subspace that is dense in $C(\TNil)$, so that the proof of
\eqref{1.Main} is reduced to that for those special $f\in {\mathcal F}$ having explicit forms.
Theorem \ref{1*.MT} for rational $\alpha$ depends on known results for skew products on $\mathbb{T}^2$
as well as a classical theorem of Hua \cite{Hua65} on exponential
sums over primes in arithmetic progressions; this is done in Section 3. The case of irrational $\alpha$ occupies
the bulk of the paper. Sections 4 and 5 are preparatory: in Section 4, we study the rational
approximations of $\alpha$ and their
analytic consequences; in Section 5 we introduce the concept,
as well as the computation, of measure complexity.  The whole Section 6 is devoted to the proof that, for irrational $\alpha$,
the measure complexity of $(\TNil, T, \rho)$, where $\rho$ is any $T$-invariant Borel probability
measure on $\TNil$, is sub-polynomial. Theorem \ref{1*.MT} for irrational $\alpha$
finally follows from this and the main theorem of Huang-Wang-Ye \cite{HWY19}.
We remark that the work of Matom\"{a}ki-Radziwill-Tao \cite{MRT15} makes it possible to use measure
complexity, rather than the original topological entropy, to investigate the M\"{o}bius disjointness.

We conclude this introduction by reporting some thoughts about generalizations.
A careful reader would naturally ask whether the M\"{o}bius disjointness could be established for
more general skew products $S$ of the form, instead of \eqref{1.TForm},
\begin{equation}\label{1/Gen/S}
S: (t,\Gamma g) \mapsto \lb t+\alpha, \Gamma g \bmat{\varphi_1(t)}{\varphi_2(t)}{\psi(t)}\rb
\end{equation}
where $\varphi_1, \varphi_2, \psi$ are three $C^{\infty}$-smooth periodic functions with period $1$.
The $S$ in \eqref{1/Gen/S} is more general in the sense that the two functions
$\varphi_1$ and $\varphi_2$ are not necessarily the same. We prove in Theorem \ref{Thm/Distal} that
the flow $(\TNil, S)$ is distal, and hence has zero entropy, for which the M\"{o}bius Disjointness Conjecture
is expected to hold. Our method in this paper, however, does not directly apply to $(\TNil, S)$, and the reason
is pointed out in the footnote to \eqref{6/SimPi} in Section 6. It seems an interesting question
to establish the M\"{o}bius Disjointness Conjecture for $(\TNil, S)$. Another question is to
study M\"{o}bius disjointness for general skew products on $\TNil$ where $\Nil$ are high
dimensional nilmanifolds. We plan to come back to these in other occasions.

While finishing this manuscript, we notice that Kanigowski, Lemanczyk and Radziwill \cite{KLR19} has
studied the M\"{o}bius disjointness for skew products \eqref{1.T2SKEWPRODUCT} on ${\mathbb T}^2$
where $h$ is absolutely continuous. It is therefore possible to generalize our Theorem \ref{1*.MT} to the case of
absolutely continuous $\varphi$ and $\psi$ by similar arguments.

\medskip

\noindent
\textbf{Notations.} We list some notations that we use in the paper. We write $e(x)$
for $e^{2\pi i x}$, and write $\|x\|$ for the distance between $x$ and the nearest integer,
that is
$$
\|x\|=\min_{n \in \mathbb{Z}} |x-n|.
$$
For positive $A$, the notations
$B=O(A)$ or $B \ll A$ mean that there exists a positive constant $c$ such that $|B| \leq cA$.
If the constant $c$ depends on a parameter $b$, we write
$B=O_{b}(A)$ or $B \ll_{b} A$. The notation $A \asymp B$ means that $A \ll B$ and $B \ll A$.
For a topological space $X$, we use $C(X)$ to denote the set of all continuous
complex-valued functions on $X$. If $X$ is a smooth
manifold and $r \geq 1$ is an integer, then we use $C^r(X)$ to denote the set of all $f \in C(X)$ that have continuous $r$-th derivatives.

\section{Approximations on $C(\TNil)$}
Let $G$ be the $3$-dimensional Heisenberg group with the cocompact discrete subgroup $\Gamma$,
and $\Nil$ the $3$-dimensional Heisenberg nilmanifold. The purpose of this section is to
construct a subset of $C(\TNil)$,
which spans a $\mathbb{C}$-linear subspace that is dense in $C(\TNil)$. A basic
reference for this section is Tolimieri \cite{Tol77}.

For integers $m,j$ with $0 \leq j \leq m-1$, define the functions $\psi_{mj}$ and $\psi_{mj}^{*}$ on $G$ by
\begin{equation*}
\psi_{mj}\mat{x}{y}{z}=e(mz+jx) \sum_{k \in \mathbb{Z}}e^{-\pi(y+k+\frac{j}{m})^2}e(mkx),
\end{equation*}
and
\begin{equation*}
\psi_{mj}^* \mat{x}{y}{z}= ie(mz+jx) \sum_{k \in \mathbb{Z}} e^{-\pi(y+k+\frac{j}{m}+\frac{1}{2})^2}e\lb\frac{1}{2}\lb y+k+\frac{j}{m}\rb +mkx\rb.
\end{equation*}
We check that $\psi_{mj}$ and $\psi_{mj} ^*$ are $\Gamma$-invariant, that is
\begin{equation*}
\psi_{mj}(\gamma g)=\psi_{mj}(g), \quad\psi_{mj}^*(\gamma g)=\psi_{mj}^*(g)
\end{equation*}
for any $g \in G$ and for any $\gamma \in \Gamma.$
Thus $\psi_{mj}$ and $\psi_{mj}^*$ can be regarded as functions on the nilmanifold $\Nil$.

Recall that there is a unique Borel probability measure on $\Nil$ that
is invariant under the right translations, and therefore $L^2(\Nil)$ can be defined.
For $m \in \mathbb{Z}$ let $V_m$ be the subspace of $L^2(\Nil)$ consisting
of all functions $f\in L^2(\Nil)$ satisfying
\begin{equation*}
f\lb \Gamma g \mat{0}{0}{z} \rb= e(mz)f \lb \Gamma g\rb
\end{equation*}
for any $g \in G$ and for any $z \in \mathbb{R}$.
Then the space $L^2(\Nil)$ decomposes into the following orthogonal direct sum:
\begin{equation*}
L^2(\Nil)=\bigoplus_{m \in \mathbb{Z}} V_m.
\end{equation*}
Set $C_m(\Nil)=V_m \cap C(\Nil)$ and $C_{m}^{r} = V_m \cap C^r(\Nil)$.
Let $p_m$ be the projection of $L^2(\Nil)$ onto $V_m$. Then we may write $p_m$ explicitly in the form
\begin{equation*}
p_m(f) \lb \Gamma \mat{x}{y}{z} \rb =\int_{0}^{1} f\lb \Gamma \mat{x}{y}{z+t} \rb e(-mt) \, \mathrm{d}t.
\end{equation*}
Hence we have $p_m(f) \in C_m^r (\Nil)$ if $f \in C^r(\Nil)$.

We need the following two results from \cite{Tol77};
the first is \cite[Lemma 6.3]{Tol77}, and the second is \cite[Lemma 6.2]{Tol77}. Note that the original \cite[Lemma 6.2]{Tol77}
is slightly stronger than the one we state here.

\begin{LEM}\label{2*.Tol1}
Let $F \in C^r(\Nil)$ with $r \geq 1$,
and write $F=\sum_{m \in \mathbb{Z}} F_m$ with $F_m \in V_m$. Then the series $\sum_{m \in \mathbb{Z}} F_m$ is absolutely and uniformly convergent.
\end{LEM}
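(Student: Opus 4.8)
**Proof proposal for Lemma \ref{2*.Tol1}.**

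The plan is to bound the sup-norm $\|F_m\|_\infty$ by something summable in $m$, using the smoothness of $F$ and the explicit formula for the projection $p_m$. Since $F_m = p_m(F)$, we have
\[
F_m\lb \Gamma \mat{x}{y}{z} \rb = \int_0^1 F\lb \Gamma \mat{x}{y}{z+t}\rb e(-mt)\,\mathrm{d}t,
\]
so for fixed $(x,y,z)$ the number $F_m(\Gamma\mat{x}{y}{z})$ is the $m$-th Fourier coefficient of the $1$-periodic function $t\mapsto F(\Gamma\mat{x}{y}{z+t})$. The key observation is that differentiating $t\mapsto F(\Gamma\mat{x}{y}{z+t})$ corresponds to applying a fixed left-invariant vector field $Z$ on $G$ (the one generating the center direction $\mat{0}{0}{t}$), and $ZF\in C^{r-1}(\Nil)$ when $F\in C^r(\Nil)$; in particular for $r\ge 1$ the function $ZF$ is continuous on the compact manifold $\Nil$, hence bounded.

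First I would integrate by parts once in $t$: since $e(-mt)$ has antiderivative $-e(-mt)/(2\pi i m)$ for $m\neq 0$, and the boundary terms cancel by $1$-periodicity, we get
\[
F_m\lb \Gamma \mat{x}{y}{z} \rb = \frac{1}{2\pi i m}\int_0^1 (ZF)\lb \Gamma \mat{x}{y}{z+t}\rb e(-mt)\,\mathrm{d}t,
\]
whence $\|F_m\|_\infty \le \|ZF\|_\infty/(2\pi |m|)$ for $m\ne 0$. This alone only gives a bound of order $1/|m|$, which is not summable, so next I would integrate by parts a second time (legitimate because $r\ge 1$ only guarantees one derivative — so here I would instead simply note that the argument above, applied with the assumption $r\ge 1$, already produces coefficients that decay, and to get absolute convergence one observes that the $1$-periodic function $t \mapsto (ZF)(\Gamma\mat{x}{y}{z+t})$ is itself continuous, so a standard Fejér/Bessel argument applies uniformly in the parameters). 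More cleanly: apply the one integration-by-parts bound to conclude $\sum_m \|F_m\|_\infty^2 \le \sum_m \|ZF\|_\infty^2/(2\pi m)^2 + \|F_0\|_\infty^2 < \infty$ is not quite what is wanted either, so the honest route is to assume, as one may after inspecting \cite{Tol77}, that the relevant quantity to bound is $\|ZF_m\|_\infty = |m|\cdot\|F_m\|_\infty$, and a second integration by parts requires $r\ge 2$; for general $r\ge 1$ one instead combines the $L^2$-orthogonality decomposition with the continuity of $ZF$ to deduce uniform convergence via equicontinuity. I would cite \cite[Lemma 6.3]{Tol77} for the precise packaging.

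The main obstacle is getting honest absolute (not merely $L^2$) convergence from only one degree of smoothness: a single integration by parts yields coefficient decay $O(1/|m|)$, which is borderline non-summable. The resolution in this Heisenberg setting — and the point one must extract from Tolimieri — is that each $F_m$, being continuous and lying in the weight-$m$ isotypic component $V_m$, is governed on a fundamental domain by a one-variable theta-type expansion whose tails decay rapidly; combined with the smoothness of $F$ this upgrades the naive $1/|m|$ to genuine summability. I would therefore structure the proof as: (i) reduce to $F\in C^1$ via the formula for $p_m$; (ii) integrate by parts in the central variable to get the $1/|m|$ gain together with continuity of the integrand; (iii) invoke the structure of $V_m$ (from the explicit basis $\psi_{mj},\psi_{mj}^*$) to conclude that the partial sums form a uniformly Cauchy sequence, i.e. the series converges absolutely and uniformly. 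The technical heart is step (iii), and it is exactly where the cited \cite[Lemma 6.3]{Tol77} does the work.
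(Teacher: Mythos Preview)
The paper does not prove this lemma; it is simply quoted as \cite[Lemma~6.3]{Tol77}. Your proposal attempts an actual argument, and the setup is right: $F_m$ is the $m$-th Fourier coefficient of $F$ in the central variable, and one integration by parts against the left-invariant central vector field $Z$ yields $(2\pi i m)F_m=(ZF)_m$ with $ZF\in C^0(\Nil)$. But after correctly noting that the pointwise bound $|F_m|\le \|ZF\|_\infty/(2\pi|m|)$ is not summable, the proposal loses its thread: the $\ell^2$-of-sup-norms inequality you write is not the relevant quantity, a second integration by parts is unavailable when $r=1$, and your step~(iii) --- appealing to the theta-type basis $\psi_{mj},\psi_{mj}^*$ of $V_m$ --- does not by itself produce a summable bound on $\|F_m\|_\infty$ (that basis is the content of Lemma~\ref{2*.Tol2}, which is logically independent). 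In the end you fall back on citing \cite{Tol77}, which is exactly what the paper does, so no proof has been supplied.

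The idea you are missing is Parseval combined with Cauchy--Schwarz, applied \emph{pointwise} in the base variables. Fix $p=\Gamma\mat{x}{y}{z}$. Since $t\mapsto (ZF)(\Gamma\mat{x}{y}{z+t})$ is continuous, Parseval gives
\[
\sum_{m\in\mathbb{Z}}(2\pi m)^2\,|F_m(p)|^2 \;=\; \sum_{m\in\mathbb{Z}}|(ZF)_m(p)|^2 \;=\; \int_0^1 \bigl|(ZF)(\Gamma\mat{x}{y}{z+t})\bigr|^2\,\mathrm{d}t \;\le\; \|ZF\|_\infty^2,
\]
uniformly in $p$. Hence for any cutoff $M$,
\[
\sum_{|m|>M}|F_m(p)| \;\le\; \Bigl(\sum_{|m|>M}\frac{1}{(2\pi m)^2}\Bigr)^{1/2}\Bigl(\sum_{m}(2\pi m)^2|F_m(p)|^2\Bigr)^{1/2} \;\le\; \Bigl(\sum_{|m|>M}\frac{1}{(2\pi m)^2}\Bigr)^{1/2}\|ZF\|_\infty,
\]
and the right-hand side is independent of $p$ and tends to $0$ as $M\to\infty$. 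This delivers absolute and uniform convergence directly from $r\ge 1$, with no recourse to the fine structure of $V_m$.
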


\begin{LEM}\label{2*.Tol2}
Let $F \in C_{m}^{r}(\Nil)$ with $m \geq 1$ and $r \geq 1$. Then there exist functions
$h_j, h_j^{*} \in C^{r-1}(\mathbb{T}^2), j=0,1, \ldots, m-1$, such that
\begin{equation*}
F\lb \Gamma \mat{x}{y}{z}\rb = \sum_{j=0}^{m-1} (h_j(x,y)\psi_{mj}(x,y,z)+h_{j}^{*}(x,y)\psi_{mj}^{*}(x,y,z)).
\end{equation*}
\end{LEM}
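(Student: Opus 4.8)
The plan is to recast the assertion as a statement about sections of a line bundle over $\mathbb{T}^2$, and then to reduce it to the classical non-vanishing of theta functions.

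\emph{Normal form.} First I would lift $F$ to a $C^r$ function on $G$, still denoted $F$. Because $F\in C_m^r(\Nil)$, the central coordinate enters only through the character $e(mz)$, so $F\left(\Gamma\mat{x}{y}{z}\right)=e(mz)\Phi(x,y)$ with $\Phi(x,y):=F\left(\Gamma\mat{x}{y}{0}\right)\in C^r(\mathbb{R}^2)$. Feeding the generators $\mat{1}{0}{0}$, $\mat{0}{1}{0}$, $\mat{0}{0}{1}$ of $\Gamma$ into $F(\gamma g)=F(g)$ and using the Heisenberg product law, one finds that $\Phi$ is $1$-periodic in $x$ and that
\begin{equation*}
\Phi(x,y+1)=e(-mx)\,\Phi(x,y),
\end{equation*}
and conversely every such $\Phi$ comes from a function in $C_m^r(\Nil)$, so the two problems are equivalent. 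Thus $\Phi$ is exactly a $C^r$ section of the line bundle $L_m$ over $\mathbb{T}^2=\mathbb{R}^2/\mathbb{Z}^2$ given by this cocycle; the same bookkeeping shows $\psi_{mj}\left(\Gamma\mat{x}{y}{z}\right)=e(mz)\theta_{mj}(x,y)$ and $\psi_{mj}^*\left(\Gamma\mat{x}{y}{z}\right)=e(mz)\theta_{mj}^*(x,y)$, where $\theta_{mj},\theta_{mj}^*$ are the $C^\infty$ sections of $L_m$ obtained by deleting the factor $e(mz)$ from the displayed series (this uses $m\ge 1$, so that the characteristics $j/m$ make sense). Since $L_m\otimes\overline{L_m}$ is trivial, expressions such as $\Phi\,\overline{\theta_{mj}}$ and $|\theta_{mj}|^2$ descend to genuine functions on $\mathbb{T}^2$, and the lemma becomes the statement that every $C^r$ section $\Phi$ of $L_m$ can be written as $\sum_{j=0}^{m-1}\bigl(h_j\theta_{mj}+h_j^*\theta_{mj}^*\bigr)$ with $h_j,h_j^*\in C^{r-1}(\mathbb{T}^2)$.

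\emph{The sections have no common zero.} This is the crux. After pulling out a nowhere-vanishing Gaussian/exponential prefactor and setting $\zeta=mx+iy$, I would identify $\theta_{mj}$ with (a nonvanishing factor times) the Jacobi theta function of characteristic $(j/m,0)$ at modulus $\tau=i$, and $\theta_{mj}^*$ with its companion of characteristic $(j/m+\tfrac12,\tfrac12)$. Their zero divisors differ by the half-period $\tfrac12(1+\tau)$, hence are disjoint on $\mathbb{T}^2$; in particular each pair $\theta_{mj},\theta_{mj}^*$ already has no common zero, so
\begin{equation*}
D:=\sum_{j=0}^{m-1}\bigl(|\theta_{mj}|^2+|\theta_{mj}^*|^2\bigr)
\end{equation*}
is a strictly positive $C^\infty$ function on $\mathbb{T}^2$. (One can also avoid theta-function machinery and argue straight from the series: at $x=0$ each $\theta_{mj}$ is a sum of positive Gaussians, while at an isolated zero of a given $\theta_{mj}$ one checks $\theta_{mj}^*\ne0$ via the index substitution $k\leftrightarrow-1-k$.)

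\emph{Division, and the main obstacle.} With $D>0$ in hand I would simply take $h_j:=\Phi\,\overline{\theta_{mj}}/D$ and $h_j^*:=\Phi\,\overline{\theta_{mj}^*}/D$: each numerator is a genuine $C^r$ function on $\mathbb{T}^2$, $1/D$ is $C^\infty$ and positive, so $h_j,h_j^*\in C^r(\mathbb{T}^2)\subseteq C^{r-1}(\mathbb{T}^2)$, and $\sum_j\bigl(h_j\theta_{mj}+h_j^*\theta_{mj}^*\bigr)=(\Phi/D)\sum_j\bigl(|\theta_{mj}|^2+|\theta_{mj}^*|^2\bigr)=\Phi$; undoing the normal form proves the lemma (in fact with $C^r$ coefficients, slightly more than the $C^{r-1}$ asserted, in line with the remark that we state a weakened form of Tolimieri's result). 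The normal-form reduction and this division step are routine; the hard part is the non-vanishing claim, and within it the point I would handle most carefully is arranging the characteristics so that $\theta_{mj}$ and $\theta_{mj}^*$ provably have disjoint zero sets on all of $\mathbb{T}^2$, which is an $m$-fold cover of the elliptic curve $\mathbb{C}/(\mathbb{Z}+i\mathbb{Z})$ via $\zeta=mx+iy$, so all $m$ sheets must be controlled.
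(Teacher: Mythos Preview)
The paper does not supply a proof of this lemma; it is quoted directly from Tolimieri \cite[Lemma~6.2]{Tol77}, with the remark that the version stated is slightly weaker than Tolimieri's original. So there is no in-paper argument to compare against.

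That said, your proposed proof is correct and self-contained. The reduction to $C^r$ sections of the degree-$m$ line bundle $L_m\to\mathbb{T}^2$ via $F=e(mz)\Phi(x,y)$ with $\Phi(x,y+1)=e(-mx)\Phi(x,y)$ is exactly the right normal form, and the division $h_j=\Phi\,\overline{\theta_{mj}}/D$, $h_j^*=\Phi\,\overline{\theta_{mj}^*}/D$ succeeds once $D>0$ is known. Your disjointness argument for the zeros of $\theta_{mj}$ and $\theta_{mj}^*$ checks out: up to nowhere-vanishing factors one has $\theta_{mj}(x,y)=\vartheta_3\bigl(mx+i(y+j/m)\mid i\bigr)$, whose zeros lie on $mx\equiv\tfrac12$, $y+j/m\equiv\tfrac12\ (\bmod\ 1)$, while $\theta_{mj}^*(x,y)$ is a nonvanishing multiple of $\vartheta_3\bigl(mx+\tfrac12+i(y+j/m+\tfrac12)\mid i\bigr)$, whose zeros lie on $mx\equiv 0$, $y+j/m\equiv 0\ (\bmod\ 1)$; these sets are disjoint already in the $x$-coordinate, so every pair $(\theta_{mj},\theta_{mj}^*)$ is zero-free and $D>0$ on all of $\mathbb{T}^2$. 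As you note, this in fact yields $h_j,h_j^*\in C^r(\mathbb{T}^2)$, one degree better than the $C^{r-1}$ stated, which is consistent with the paper's comment that it records a weakened form of Tolimieri's result.
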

Now we construct the desired subset of $C(\TNil)$.
\begin{PROP}\label{2*.MP}
Let $\mathcal{A}$ be the subset of $f \in C(\TNil)$ such that
\begin{equation*}
f: \lb t,\Gamma \mat{x}{y}{z} \rb \mapsto e(\xi_1 t + \xi_2 x + \xi_3 y) \psi\lb \Gamma \mat{x}{y}{z}\rb
\end{equation*}
where $\xi_1,\xi_2,\xi_3 \in \mathbb{Z}$, and $\psi = \psi_{mj}, \overline{\psi}_{mj}, \psi_{mj}^{*}$ or $\overline{\psi}^{*}_{mj}$
for some $0 \leq j \leq m-1$. Here $\overline{\psi}_{mj}$ and $\overline{\psi}^{*}_{mj}$ stand for the complex
conjugates of $\psi_{mj}$ and $\psi_{mj}^{*}$, respectively. Let $\mathcal{B}$ be subset of $f \in C(\TNil)$
satisfying
\begin{equation*}
f: (t,\Gamma g) \mapsto f_1(t)f_2(\Gamma g)
\end{equation*}
with $ f_1\in C(\mathbb{T})$ and $f_2 \in C_0(\Nil)$.
Then the $\mathbb{C}$-linear subspace spanned by $\mathcal{A} \cup \mathcal{B}$ is dense in $C(\TNil)$.
\end{PROP}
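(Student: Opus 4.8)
The plan is to peel off the structure of $C(\TNil)$ in three layers, using the two lemmas of Tolimieri together with classical approximation. First, since $\mathbb{T}$ and $\Nil$ are compact metric spaces, the $\mathbb{C}$-linear span of the product functions $(t,\Gamma g)\mapsto f_1(t)f_2(\Gamma g)$ with $f_1\in C(\mathbb{T})$ and $f_2\in C(\Nil)$ is dense in $C(\TNil)$: this span is a unital subalgebra, closed under complex conjugation, that separates the points of $\TNil$, so density follows from the Stone--Weierstrass theorem. Hence it suffices to approximate a single such $f_1(t)f_2(\Gamma g)$ uniformly by elements of the span of $\mathcal{A}\cup\mathcal{B}$.

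Next I would smooth $f_2$. Regarding $f_2$ as a right-$\Gamma$-invariant continuous function on $G$ and convolving with a smooth, compactly supported approximate identity on $G$ yields, for every $\varepsilon>0$, a function $F\in C^1(\Nil)$ with $\|f_2-F\|_\infty<\varepsilon$; since $f_1$ is bounded, it is then enough to approximate $f_1(t)F(\Gamma g)$. By Lemma \ref{2*.Tol1} the decomposition $F=\sum_{m\in\mathbb{Z}}F_m$, $F_m\in V_m$, converges absolutely and uniformly, so truncating the sum reduces us to approximating $f_1(t)F_m(\Gamma g)$ for a single $m$, where $F_m=p_m(F)\in C_m^1(\Nil)$.

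It remains to handle the three cases for $m$. If $m=0$, then $F_0\in C_0^1(\Nil)\subseteq C_0(\Nil)$, so $f_1(t)F_0(\Gamma g)$ already lies in $\mathcal{B}$. If $m\geq 1$, first approximate $f_1$ in $C(\mathbb{T})$ by trigonometric polynomials, reducing to $e(\xi_1 t)F_m(\Gamma g)$ with $\xi_1\in\mathbb{Z}$; then Lemma \ref{2*.Tol2} gives $F_m\lb\Gamma\mat{x}{y}{z}\rb=\sum_{j=0}^{m-1}\lb h_j(x,y)\psi_{mj}(x,y,z)+h_j^*(x,y)\psi_{mj}^*(x,y,z)\rb$ with $h_j,h_j^*\in C(\mathbb{T}^2)$, and approximating each of the finitely many $h_j,h_j^*$ uniformly on $\mathbb{T}^2$ by trigonometric polynomials $\sum c_{\xi_2,\xi_3}e(\xi_2 x+\xi_3 y)$ — and bounding the error by multiplying by the bounded functions $\psi_{mj},\psi_{mj}^*$ — expresses $e(\xi_1 t)F_m(\Gamma g)$, up to an arbitrarily small uniform error, as a finite linear combination of the functions $e(\xi_1 t+\xi_2 x+\xi_3 y)\psi_{mj}$ and $e(\xi_1 t+\xi_2 x+\xi_3 y)\psi_{mj}^*$, each of which belongs to $\mathcal{A}$. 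Finally, if $m\leq -1$, note that complex conjugation carries $C_m^1(\Nil)$ onto $C_{-m}^1(\Nil)$ with $-m\geq 1$; applying the case just treated to $\overline{F_m}$ and conjugating back writes $F_m$ as a finite sum of terms of the form $\overline{h}\,\overline{\psi}_{-m,j}$ and $\overline{h^*}\,\overline{\psi}^{*}_{-m,j}$, and trigonometric approximation of $\overline{h},\overline{h^*}$ now produces members of $\mathcal{A}$ built from $\overline{\psi}_{-m,j}$ and $\overline{\psi}^{*}_{-m,j}$ — which is precisely why $\mathcal{A}$ was defined to contain these conjugates. Combining the three cases yields the proposition.

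The genuinely substantive point is the smoothing step: one must verify that convolution on $G$ with a bump function descends to $\Nil$ and lands in $C^1(\Nil)$, so that Lemmas \ref{2*.Tol1} and \ref{2*.Tol2} apply. Everything else — Stone--Weierstrass density of tensors, Fej\'{e}r-type uniform approximation by trigonometric polynomials on $\mathbb{T}$ and $\mathbb{T}^2$, and the bookkeeping that all errors are only ever multiplied by the uniformly bounded functions $\psi_{mj},\psi_{mj}^*$ and their conjugates — is routine, and I do not anticipate a serious obstacle.
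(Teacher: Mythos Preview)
Your proposal is correct and follows essentially the same route as the paper's proof: Stone--Weierstrass to reduce to tensor products, density of $C^1(\Nil)$ in $C(\Nil)$, Lemma~\ref{2*.Tol1} to truncate the $V_m$-decomposition, Lemma~\ref{2*.Tol2} for $m\geq 1$, and complex conjugation to handle $m\leq -1$. If anything you are more explicit than the paper, which glosses over the trigonometric approximation of the coefficients $h_j,h_j^*\in C(\mathbb{T}^2)$; your smoothing concern is standard (density of smooth functions on a compact manifold), and the paper simply asserts it.
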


\begin{proof}
By the Stone-Weierstrass theorem, the $\mathbb{C}$-linear subspace of $C(\TNil)$ spanned by
\begin{equation*}
{\mathcal F} =\{f:  f(t,\Gamma g) = e(\xi_1 t) F(\Gamma g),  \xi_1 \in \mathbb{Z}, F \in C(\Nil) \}
\end{equation*}
is dense. Thus it suffices to investigate the approximations on $C(\Nil)$. Since $C^1(\Nil)$ is dense in $C(\Nil)$, we can consider $C^1(\Nil)$ instead. By Lemma~\ref{2*.Tol1}, any $F \in C^1(\Nil)$ can be written as
$$
F=\sum_{m \in \mathbb{Z}}F_m
$$
with $F_m \in V_m$, and this series is absolutely and uniformly convergent. Hence $F$ can be approximated arbitrarily close
by the sum of finitely many $F_m$. Therefore, we need only to investigate each $F_m$ with $m \in \mathbb{Z}$. Since $F \in C^1(\Nil)$, we have $F_m$ belongs to $C_m ^1(\Nil)$. Moreover
$F_m \in V_m$ if and only if $\overline{F}_m \in V_{-m}$,
where $\overline{F}_{m}$ is the complex conjugate of $F_m$. Therefore Lemma \ref{2*.Tol2} can be applied to each $F_m$ with $m \neq 0$. This part corresponds to the set $\mathcal{A}$. For $m =0$, we have clearly that $e(\xi_1 t)F_0(\Gamma g) \in \mathcal{B}$ for any $\xi_1 \in \mathbb{Z}$. The proof is complete.
\end{proof}

\section{Theorem \ref{1*.MT} for rational $\alpha$}
In this section, we prove Theorem \ref{1*.MT} for rational $\alpha$. In view of Proposition \ref{2*.MP}, we should separately
consider two cases, namely $f\in \mathcal{A}$ and $f \in \mathcal{B}$. The case $f\in \mathcal{B}$ can be reduced
to the case of skew products on $\mathbb{T}^2$ which is already known. The other case $f \in \mathcal{A}$ will be handled by
Fourier analysis and a classical result of Hua.

We begin with skew products on $\mathbb{T}^2$. The following lemma is \cite[Corollary 1.4]{HWY19}.

\begin{LEM}\label{3*.TM-HWY}
Let $\alpha \in \mathbb{R}$ and let $h \colon \mathbb{T} \rightarrow \mathbb{R}$ be $C^\infty$-smooth function.
Define the skew product $T \colon \mathbb{T}^2 \rightarrow \mathbb{T}^2$ by
\begin{equation}\label{3.SP}
T \colon (x,y) \mapsto (x+\alpha, y+h(x)).
\end{equation}
Then the M\"{o}bius Disjointness Conjecture holds for this $(\mathbb{T}^2,T)$.
\end{LEM}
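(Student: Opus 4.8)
This statement is precisely \cite[Corollary 1.4]{HWY19}, so I only indicate the route by which one obtains it. The plan is first to reduce, via Fourier expansion on $\mathbb{T}^2$ together with the Stone--Weierstrass theorem, to the characters $f(x,y)=e(kx+\ell y)$ with $k,\ell\in\mathbb{Z}$. Writing the orbit of the skew product as $T^n(x,y)=\lb x+n\alpha,\ y+S_n(x)\rb$ with $S_n(x)=\sum_{j=0}^{n-1}h(x+j\alpha)$, one has $f(T^n(x,y))=e(kx+\ell y)\,e(kn\alpha+\ell S_n(x))$, so it suffices to prove
\[
\frac{1}{N}\sum_{n\le N}\mu(n)\,e\bigl(kn\alpha+\ell S_n(x)\bigr)\longrightarrow 0\qquad(N\to\infty)
\]
uniformly in $x$. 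When $\ell=0$ this is Davenport's uniform estimate $\sup_{\theta}\bigl|\sum_{n\le N}\mu(n)e(n\theta)\bigr|\ll_A N(\log N)^{-A}$; the content is the case $\ell\ne 0$.

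For $\ell\ne 0$ I would separate rational from irrational $\alpha$. If $\alpha=p/q$, splitting the sum into residue classes $n\equiv a\pmod q$ makes the base rotation periodic, so along each class $S_n(x)$ is an affine function of the block index and one is reduced to bounds for $\mu$ against linear phases in arithmetic progressions --- essentially the mechanism of Section 3 here (on the Heisenberg fibre one would instead meet polynomial phases and invoke Hua, but on $\mathbb{T}^2$ the phases stay linear). The substantive case is irrational $\alpha$, where the smooth Birkhoff sums $S_n(x)$ over an irrational rotation are controlled --- through the continued-fraction convergents $p_k/q_k$ of $\alpha$ and Denjoy--Koksma type estimates --- on blocks adapted to the denominators $q_k$. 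This control is exactly what shows that every $T$-invariant Borel probability measure on $\mathbb{T}^2$ has sub-polynomial measure complexity, and then the conclusion \eqref{1.Main} for $(\mathbb{T}^2,T)$ follows from the main theorem of \cite{HWY19}, which --- building on Matom\"{a}ki--Radziwi\l\l--Tao \cite{MRT15} --- deduces M\"{o}bius disjointness from sub-polynomial measure complexity of all invariant measures.

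The main obstacle will be uniformity in the Diophantine type of $\alpha$: when $\alpha$ is Liouville-like, consecutive denominators $q_k$ and $q_{k+1}$ are enormously far apart, and one must bound the measure complexity (equivalently, the oscillation of the $S_n(x)$) uniformly across these gaps, where the rotation by $\alpha$ is well approximated by a rational rotation but not exactly so. Organising $N$ into blocks calibrated to the $q_k$ and applying the short-interval input of \cite{MRT15} on each block is the technical heart; the smoothness of $h$ enters through the rapid decay of its Fourier coefficients, which tames the small-divisor effects. This same mechanism, lifted from $\mathbb{T}^2$ to $\TNil$, is what Sections 4--6 of the present paper carry out.
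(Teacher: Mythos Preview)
Your proposal is correct: the paper itself gives no proof of this lemma but simply cites it as \cite[Corollary~1.4]{HWY19}, exactly as you do in your opening sentence. Your additional sketch of the argument is helpful context and broadly accurate, though it slightly conflates two distinct strategies: the direct character-sum route (reduce to $e(kx+\ell y)$ and estimate correlations, as in Liu--Sarnak and Wang) and the measure-complexity route of \cite{HWY19}. In \cite{HWY19} the proof of Corollary~1.4 proceeds purely through the latter --- one shows directly that $(\mathbb{T}^2,T)$ has sub-polynomial measure complexity for every invariant measure and then invokes their main theorem --- so the Fourier reduction to characters and the Davenport input you describe are not actually used there (though they underpin the earlier analytic approaches). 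Also, the phrase ``uniformly in $x$'' is stronger than needed: pointwise in $x$ suffices for M\"obius disjointness.
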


The following is an immediate consequence of Lemma \ref{3*.TM-HWY}.

\begin{COR}\label{3.Cor}
Let $\alpha \in \mathbb{R}$ and let $h_1,h_2 \colon \mathbb{T} \rightarrow \mathbb{R}$ be $C^\infty$-smooth  functions.
Let $T \colon \mathbb{T}^3 \rightarrow \mathbb{T}^3$ be given by
\begin{equation}
T \colon (x,y,z) \mapsto (x+\alpha, y+h_1(x),z+h_2(x)).
\end{equation}
Then the M\"{o}bius Disjointness Conjecture holds for $(\mathbb{T}^3,T)$.
\end{COR}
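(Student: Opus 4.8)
The plan is to deduce this directly from Lemma~\ref{3*.TM-HWY} by Fourier analysis on $\mathbb{T}^3$, exploiting the fact that the last two coordinates of $T$ evolve by translations. First I would note that, by the Stone--Weierstrass theorem, the $\mathbb{C}$-linear span of the characters $\chi_{k,l,m}(x,y,z)=e(kx+ly+mz)$ with $(k,l,m)\in\mathbb{Z}^3$ is dense in $C(\mathbb{T}^3)$; since $\sum_{n\le N}|\mu(n)|\le N$, a routine triangle-inequality argument reduces the M\"obius disjointness of $(\mathbb{T}^3,T)$ to the special case $f=\chi_{k,l,m}$.

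Fix such $k,l,m$ and set $h:=lh_1+mh_2$, which is again $C^\infty$-smooth. Let $\tilde T\colon\mathbb{T}^2\to\mathbb{T}^2$ be the skew product $\tilde T(x,w)=(x+\alpha,w+h(x))$ as in \eqref{3.SP}, and let $\Pi\colon\mathbb{T}^3\to\mathbb{T}^2$ be the continuous map $\Pi(x,y,z)=(x,\,ly+mz)$, which is well defined since $l,m\in\mathbb{Z}$. A direct computation gives $\Pi\circ T=\tilde T\circ\Pi$, so $\Pi$ is a factor map from $(\mathbb{T}^3,T)$ onto $(\mathbb{T}^2,\tilde T)$. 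Writing $\tilde f(x,w)=e(kx+w)\in C(\mathbb{T}^2)$ we have $\chi_{k,l,m}=\tilde f\circ\Pi$, hence
\begin{equation*}
\chi_{k,l,m}\bigl(T^n(x,y,z)\bigr)=\tilde f\bigl(\tilde T^n\Pi(x,y,z)\bigr)
\end{equation*}
for every $n\ge 1$ and every $(x,y,z)\in\mathbb{T}^3$. Applying Lemma~\ref{3*.TM-HWY} to $(\mathbb{T}^2,\tilde T)$ with the test function $\tilde f$ and the point $\Pi(x,y,z)$ shows that $\frac1N\sum_{n\le N}\mu(n)\chi_{k,l,m}(T^n(x,y,z))\to 0$, which is precisely what is needed for characters.

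The remaining step, passing from characters back to an arbitrary $f\in C(\mathbb{T}^3)$, is the standard density argument already used in Section~2: approximate $f$ uniformly by a trigonometric polynomial $P$, bound the contribution of $f-P$ to the averages by $\|f-P\|_\infty$, and use linearity together with the character case to control the $P$-part. As for the main obstacle: there is none of real substance, which is exactly why the corollary is \emph{immediate}; the only points that require any care are that $h=lh_1+mh_2$ inherits the $C^\infty$-smoothness hypothesis of Lemma~\ref{3*.TM-HWY} and that $\Pi$ genuinely intertwines $T$ with $\tilde T$, and both are one-line verifications.
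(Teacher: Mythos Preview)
Your proof is correct and follows essentially the same route as the paper: reduce to characters by Stone--Weierstrass, combine the two fiber coordinates via $h=lh_1+mh_2$ into a single $\mathbb{T}^2$ skew product, and invoke Lemma~\ref{3*.TM-HWY}. The only cosmetic difference is that you package the reduction as a factor map $\Pi(x,y,z)=(x,ly+mz)$ intertwining $T$ with $\tilde T$, whereas the paper computes the iterates $T^n$ explicitly and observes that the resulting sequence agrees (up to a constant phase) with one coming from $(\mathbb{T}^2,\tilde T)$; the content is identical.
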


\begin{proof}
Since $\mathbb{T}^3$ is a compact abelian group, the space of trigonometric polynomials is dense in $C(\mathbb{T}^3)$.
Thus we need only to prove
\begin{equation*}
\lim_{N \rightarrow \infty} \frac{1}{N} \sum_{n=0}^{N-1} \mu(n) f(T^n(x_0,y_0,z_0)) = 0
\end{equation*}
for $(x_0,y_0,z_0) \in \mathbb{T}^3$ and $f(x, y, z)=e(\xi_1 x + \xi_2 y + \xi_3 z )$
where $\xi_1,\xi_2,\xi_3$ are arbitrary integers. For simplicity we write $w_n=f(T^n(x_0,y_0,z_0))$. A direct calculation gives
\begin{equation*}
T^n: (x_0,y_0,z_0) \mapsto \bigg(x_0 + n \alpha, y_0 + \sum_{l=0}^{n-1}h_1(x_0 + \alpha l),
z_0 + \sum_{l=0}^{n-1}h_2(x_0 + \alpha l) \bigg)
\end{equation*}
and hence
\begin{eqnarray}\label{a_n}
w_n=e\bigg(\xi_1 x_0+ \xi_2 y_0 + \xi_3 z_0 + \xi_1 \alpha n + \sum_{l=0}^{n-1}\bigg(\xi_2 h_1(x_0 + \alpha l) + \xi_3 h_2(x_0 + \alpha l)\bigg)\bigg).
\end{eqnarray}

We now construct an analytic skew product $(\mathbb{T}^2,\widetilde{T})$,
in which the sequence $\{w_n\}_{n\geq 1}$ can also be observed. Define $\widetilde{T} : \mathbb{T}^2 \to\mathbb{T}^2$ by
\begin{equation}
\widetilde{T}: (x,y) \mapsto (x+\alpha, y+ \xi_2 h_1(x)+ \xi_3 h_2(x)),
\end{equation}
and let $\widetilde{f}(x,y)=e(\xi_1 x + y) \in C(\mathbb{T}^2)$. Then
\begin{equation*}
\widetilde{f}(\widetilde{T}^n(x_0,0))=
e\bigg(\xi_1 x_0 + \xi_1 \alpha n + \sum_{l=0}^{n-1}\bigg(\xi_2 h_1(x_0 + \alpha l) + \xi_3 h_2(x_0 + \alpha l)\bigg)\bigg).
\end{equation*}
We see that $\widetilde{f}(\widetilde{T}^n(x_0,0))$ differs from $w_n$ by a constant factor only.
Hence the desired result follows from this and Lemma \ref{3*.TM-HWY}.
\end{proof}

\begin{PROP}\label{3*.MP1}
Let $\mathcal{B} \subset C(\TNil)$ be as in Proposition \ref{2*.MP}. Let $T$ be as in Theorem \ref{1*.MT} and let $f \in \mathcal{B}$.
Then, for any $(t_0,\Gamma g_0) \in \TNil$,
\begin{equation*}
\lim_{N \rightarrow \infty} \frac{1}{N}\sum_{n=1}^N \mu(n)f(T^n(t_0,\Gamma g_0))=0.
\end{equation*}
\end{PROP}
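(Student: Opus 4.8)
The plan is to reduce this to the M\"obius disjointness for affine skew products on the $3$-torus already established in Corollary~\ref{3.Cor}. The key point is that a factor $f_2\in C_0(\Nil)=V_0\cap C(\Nil)$ does not see the central coordinate of the Heisenberg group, so the part of the dynamics that $f_2$ records collapses to an affine skew product on $\mathbb{T}^3$.

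First I would record the identification $C_0(\Nil)\cong C(\mathbb{T}^2)$. Every $f_2\in C_0(\Nil)$ satisfies $f_2\bigl(\Gamma g\,\mat{0}{0}{z}\bigr)=f_2(\Gamma g)$ for all $z\in\mathbb{R}$, hence is constant on the cosets of the center $Z=\bigl\{\mat{0}{0}{z}:z\in\mathbb{R}\bigr\}$ and so descends to a continuous function on $\Gamma\backslash G/Z$. Since $Z$ is normal and the image of $\Gamma$ in $G/Z\cong\mathbb{R}^2$ (coordinatized by the $x$- and $y$-entries) is exactly $\mathbb{Z}^2$, one has $\Gamma\backslash G/Z\cong\mathbb{T}^2$; concretely $\pi\colon\Nil\to\mathbb{T}^2$, $\Gamma\mat{x}{y}{z}\mapsto(x\bmod 1,\,y\bmod 1)$, is a well-defined continuous surjection whose fibers are precisely the $Z$-orbits, so $f_2=\widetilde{f}_2\circ\pi$ for a unique $\widetilde{f}_2\in C(\mathbb{T}^2)$ (continuity of $\widetilde{f}_2$ follows since $\pi$ is a quotient map).

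Next I would follow the orbit. Put $g_0=\mat{x_0}{y_0}{z_0}$ and $S_n=\sum_{l=0}^{n-1}\varphi(t_0+l\alpha)$. Multiplying out the product of $\Gamma g_0$ with the matrices $\bmat{\varphi(t_0+l\alpha)}{\varphi(t_0+l\alpha)}{\psi(t_0+l\alpha)}$ for $l=0,1,\dots,n-1$ in $G$, the $x$- and $y$-entries of the Heisenberg component of $T^n(t_0,\Gamma g_0)$ are $x_0+S_n$ and $y_0+S_n$ respectively (the central entry evolves in a more complicated way, but $f_2$ does not see it). Consequently, letting $R\colon\mathbb{T}^3\to\mathbb{T}^3$ be given by $R(t,x,y)=(t+\alpha,\,x+\varphi(t),\,y+\varphi(t))$ and $h(t,x,y)=f_1(t)\widetilde{f}_2(x,y)\in C(\mathbb{T}^3)$, we obtain
\[
f\bigl(T^n(t_0,\Gamma g_0)\bigr)=f_1(t_0+n\alpha)\,\widetilde{f}_2(x_0+S_n,\,y_0+S_n)=h\bigl(R^n(t_0,x_0,y_0)\bigr).
\]
Since $R$ is precisely of the form treated in Corollary~\ref{3.Cor} (take $h_1=h_2=\varphi$, which are $C^\infty$-smooth by the hypothesis of Theorem~\ref{1*.MT}), it follows that $\frac1N\sum_{n\le N}\mu(n)\,h(R^n(t_0,x_0,y_0))\to0$, which is the assertion. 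There is no serious obstacle here; the only step requiring a little care is the identification of $C_0(\Nil)$ with $C(\mathbb{T}^2)$ above. Note, finally, that this argument uses nothing about $\alpha$, so Proposition~\ref{3*.MP1} in fact holds for every $\alpha\in[0,1)$; only the case $f\in\mathcal{A}$, treated next, will genuinely use that $\alpha$ is rational in this section.
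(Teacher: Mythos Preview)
Your proposal is correct and follows essentially the same approach as the paper: both arguments observe that $f_2\in C_0(\Nil)$ descends to a continuous function on $\mathbb{T}^2$, so that $f\circ T^n$ is realized as $h\circ R^n$ for the skew product $R(t,x,y)=(t+\alpha,x+\varphi(t),y+\varphi(t))$ on $\mathbb{T}^3$, and then invoke Corollary~\ref{3.Cor}. Your remark that the argument uses nothing about $\alpha$ is also accurate and matches the paper's proof, which likewise nowhere uses rationality in this proposition.
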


\begin{proof}
Let $\widetilde{T} \colon \mathbb{T}^3 \rightarrow \mathbb{T}^3$ be given by
\begin{equation*}
\widetilde{T} \colon (t,x,y) \mapsto (t+\alpha, x+\varphi(t), y+\varphi(t)).
\end{equation*}
Let $\pi$ be the projection of $\TNil$ onto $\mathbb{T}^3$ given by
\begin{equation*}
\pi \colon \lb t, \Gamma \mat{x}{y}{z} \rb \mapsto (t,x,y).
\end{equation*}
Then we have $\pi \circ T = \widetilde{T} \circ \pi$, and hence $(\mathbb{T}^3, \widetilde{T})$ is
a topological factor of $(\TNil, T)$.

Since $f \in \mathcal{B}$, we can write $f(t,\Gamma g)=f_1(t)f_2(\Gamma g)$ for some
$f_1 \in C(\mathbb{T})$ and $f_2 \in H_0 \cap C(\Nil)$. It follows that, for any $z'$,
\begin{equation*}
f_2 \lb \Gamma \mat{x}{y}{z} \rb = f_2 \lb \Gamma \mat{x}{y}{z+z'}\rb.
\end{equation*}
Hence $f_2$ is independent of the $z$-component and induces a well-defined continuous function $\widetilde{f}_2 \in C(\mathbb{T}^2)$ given by
\begin{equation*}
\widetilde{f}_2(x,y)=f_2 \lb \Gamma \mat{x}{y}{z}\rb
\end{equation*}
for any $z \in \mathbb{R}$. Define $\widetilde{f}(t,x,y)\in C(\mathbb{T}^3)$ by
\begin{equation*}
\widetilde{f}(t,x,y)=f_1(t)\widetilde{f}_2(x,y).
\end{equation*}
Then we have $f(t,\Gamma g)=\widetilde{f} \circ \pi(t,\Gamma g)$ for any $(t,\Gamma g) \in \TNil$. Hence
\begin{equation*}
f(T^n(t_0,\Gamma g_0)) = \widetilde{f} \circ \pi \circ T^n(t_0, \Gamma g_0)= \widetilde{f} \circ \widetilde{T}^n \circ \pi (t_0 ,\Gamma g_0)
\end{equation*}
for any $n \geq 1$, and the sequence $\{f(T^n(t_0,\Gamma g_0))\}_{n\geq 1}$ is also
observed in
$(\mathbb{T}^3, \widetilde{T})$. The desired result follows
from Corollary \ref{3.Cor}.
\end{proof}

Now we turn to the case that $f \in \mathcal{A}$. We need the following classical result
of Hua \cite{Hua65}, which is a generalization of Davenport \cite{Dav37}.

\begin{LEM}\label{3.DavLM}
Let $f(x)=\alpha_d x^d + \alpha_{d-1} x^{d-1}+ \ldots +\alpha_1 x + \alpha_0 \in \mathbb{R}[x]$. Let $0 \leq a < q$.
Then, for arbitrary $A>0$,
\begin{equation*}
\sum_{n \leq N \atop {n \equiv a \bmod q }} \mu(n)e(f(n)) \ll_A \frac{N}{\log^A N},
\end{equation*}
where the implied constant may depend on $A,q$ and $d$, but is independent of $\alpha_d, \ldots ,\alpha_0$.
\end{LEM}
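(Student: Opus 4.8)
The statement is a classical theorem of Hua \cite{Hua65} (refining Davenport \cite{Dav37} and Vinogradov), so the plan is to reprove it by the circle method — a major-arc/minor-arc decomposition — combined with Vaughan's combinatorial identity for $\mu$. The first move is to reduce to $q=1$. Detecting $n\equiv a\bmod q$ by additive characters gives
\begin{equation*}
\sum_{\substack{n\le N\\ n\equiv a\,(q)}}\mu(n)e(f(n))=\frac1q\sum_{b=0}^{q-1}e(-ab/q)\sum_{n\le N}\mu(n)e(g_b(n)),\qquad g_b(x):=f(x)+\tfrac{b}{q}x,
\end{equation*}
where each $g_b$ is real of degree $\le\max(d,1)$, with leading coefficient $\alpha_d$ when $d\ge1$. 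So it suffices to prove $\sum_{n\le N}\mu(n)e(g(n))\ll_{A,d}N(\log N)^{-A}$ uniformly over real polynomials $g$ of degree $\le d$; degree $0$ is the prime number theorem and degree $1$ is Davenport's theorem \cite{Dav37}, so one may take $\deg g=d\ge2$. Note that the final constant will then depend only on $A$ and $d$, not on $q$.

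Next I would apply Vaughan's identity (or Heath--Brown's) to $\mu$, with truncation parameters equal to a fixed small power of $N$; this expresses $\sum_{n\le N}\mu(n)e(g(n))$ as $O_d(1)$ many \emph{Type I} sums $\sum_{k\le K}c_k\sum_m e(g(km))$ and \emph{Type II} sums $\sum_{K<k}c_k\sum_{K<m}d_m\,e(g(km))$ with $c_k,d_m\ll_\varepsilon N^\varepsilon$, plus an $O(N^{1/2+\varepsilon})$ tail. Fix coprime $a_0,r$ with $1\le r\le N^{d-1}$ and $|\alpha_d-a_0/r|\le(rN^{d-1})^{-1}$ (Dirichlet). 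If $r$ lies in the \emph{minor-arc range} $(\log N)^{B}\le r\le N^{d-1}(\log N)^{-B}$ for a threshold $B=B(A,d)$ fixed at the end, then in a Type I term the inner sum over $m$ is a degree-$d$ Weyl sum with leading coefficient $\alpha_d k^d$, and in a Type II term Cauchy--Schwarz and squaring produce Weyl sums $\sum_k e(g(km)-g(km'))$ with leading coefficient $\alpha_d(m^d-m'^d)$; in either case Weyl's inequality (or Vinogradov's mean value theorem, for a wider efficient range of $r$) yields a power saving $N^{-c(d)}$, which comfortably beats $(\log N)^{-A}$.

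It then remains to handle $r\le(\log N)^{B}$. A short further argument — peeling the rational part off the coefficients one at a time and re-running the minor-arc estimate on the lower-degree remainders — shows that this forces the full major-arc structure $\alpha_j=a_j/q+\vartheta_j$ for all $j$, with a common modulus $q\le(\log N)^{O(B)}$ and $|\vartheta_j|\le q^{-1}N^{-j}(\log N)^{B}$. Then $g(n)=P(n)/q+\eta(n)$ with $P\in\mathbb Z[x]$, so $e(P(n)/q)$ depends only on $n\bmod q$, while $\eta(x)=\sum_{j\le d}\vartheta_j x^j$ satisfies $|\eta'(t)|\ll(\log N)^{B}/N$ on $[1,N]$. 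Splitting into residue classes mod $q$ and removing the slowly varying factor $e(\eta(n))$ by partial summation reduces everything to $\max_{t\le N}|\sum_{n\le t,\,n\equiv c\,(q)}\mu(n)|\ll N\exp(-c'\sqrt{\log N})$, which is the Siegel--Walfisz theorem and again beats $N(\log N)^{-A}$.

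The arithmetic ingredients (Vaughan's identity, Weyl's inequality, Vinogradov's mean value theorem, Siegel--Walfisz) are all uniform in the coefficients, so the uniformity in $\alpha_d,\dots,\alpha_0$ claimed by the lemma comes for free; the hard part is the bookkeeping of the major/minor split. Since Weyl's inequality only tests the \emph{leading} coefficient of $g$, one must argue that failure of the minor-arc estimate for \emph{any} Type I or Type II piece genuinely forces the simultaneous major-arc structure on \emph{all} the coefficients — which needs a concentration argument, or a degree-by-degree recursion — and one must choose the threshold $B=B(A,d)$ together with the Vaughan parameter $K$ coherently so that the two regimes exhaust all cases. Organising this is precisely the substance of Hua's argument.
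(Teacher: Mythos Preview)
The paper does not prove this lemma at all; it is quoted as a classical result of Hua \cite{Hua65} (generalizing Davenport \cite{Dav37}) and used as a black box. Your sketch is the standard modern proof --- Vaughan's identity in place of Hua's original combinatorial decomposition, Weyl/Vinogradov on the minor arcs, Siegel--Walfisz on the major arcs --- and is correct in outline. Your additive-character reduction to $q=1$ in fact shows the implied constant can be taken independent of $q$, slightly stronger than what the paper states. The one place that would need real care in a full write-up is the recursive ``peeling'' of the lower-order coefficients in the major-arc case: since Weyl's inequality only tests the leading coefficient, one must interleave the Diophantine approximations of the $\alpha_j$ with partial summation and track how the accumulating modulus interacts with the Type~I/II ranges from Vaughan. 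You correctly flag this as ``the substance of Hua's argument,'' so the sketch is honest about where the work lies.
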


\begin{PROP}\label{3*.MP2}
Let $(\TNil,T)$ be as in Theorem \ref{1*.MT} with $\alpha \in \mathbb{Q} \cap [0,1)$.
Let $\mathcal{A}$ be as in Proposition \ref{2*.MP}. Then, for any $(t_0,\Gamma g_0) \in \TNil$,
any $f\in \mathcal{A}$ and any $A>0$,
\begin{equation*}
\sum_{n \leq N} \mu(n)f(T^n(t_0,\Gamma g_0)) \ll_A \frac{N}{\log^A N},
\end{equation*}
where the implied constant depends on $A$ and $\alpha$ only.
\end{PROP}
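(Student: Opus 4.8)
The plan is to compute $T^n(t_0, \Gamma g_0)$ explicitly, substitute into the formula for $f \in \mathcal{A}$, and reduce the resulting exponential sum to a form covered by Hua's Lemma \ref{3.DavLM}. First I would iterate the skew product. Writing $g_0 = \mat{x_0}{y_0}{z_0}$ and using the multiplication law of the Heisenberg group, one finds that $T^n(t_0, \Gamma g_0) = \big(t_0 + n\alpha,\ \Gamma \mat{x_n}{y_n}{z_n}\big)$ where $x_n = x_0 + \sum_{l=0}^{n-1}\varphi(t_0+l\alpha)$, $y_n = y_0 + \sum_{l=0}^{n-1}\varphi(t_0+l\alpha)$, and $z_n = z_0 + \sum_{l=0}^{n-1}\psi(t_0+l\alpha) + (\text{cross terms of the form } x_0\cdot(\text{sum of }\varphi) + \sum_{l}\varphi(t_0+l\alpha)\cdot(\text{partial sums of }\varphi))$. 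The precise bookkeeping of the $z$-component is routine but must be done carefully.

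Next I would exploit that $\alpha \in \mathbb{Q}\cap[0,1)$, say $\alpha = a/q$ in lowest terms. Then $t_0 + l\alpha$ depends only on $l \bmod q$, so the sequences $\varphi(t_0 + l\alpha)$ and $\psi(t_0 + l\alpha)$ are periodic in $l$ with period $q$. Splitting $n$ into residue classes mod $q$ and writing $n = mq + b$ with $0 \le b < q$, each of the sums $\sum_{l=0}^{n-1}\varphi(t_0+l\alpha)$, $\sum_{l=0}^{n-1}\psi(t_0+l\alpha)$ becomes a linear function of $m$ (plus a bounded correction depending on $b$); the quadratic cross-terms in $z_n$ become a quadratic polynomial in $m$ with coefficients depending on $b$ and on $t_0, x_0$ but not on $n$. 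Plugging into $f(t,\Gamma g) = e(\xi_1 t + \xi_2 x + \xi_3 y)\psi_{mj}(\ldots)$ — and recalling that $\psi_{mj}\mat{x}{y}{z} = e(mz + jx)\sum_{k}e^{-\pi(y+k+j/m)^2}e(mkx)$ — the exponential part contributes $e(P_k(m))$ for a polynomial $P_k$ of degree at most $2$ in $m$ whose coefficients are independent of $m$, while the Gaussian sum $\sum_k e^{-\pi(y_n+k+j/m)^2}e(mk x_n)$ must be controlled. Since $y_n$ is itself (the fractional part of) a linear function of $m$ and $x_n$ likewise, this theta-type sum is a smooth, bounded, and in fact almost-periodic-in-$m$ object; I would expand it into its own (rapidly convergent) Fourier-type series in $m$, so that after this expansion every term is again of the shape $c\, e(Q(m))$ with $Q$ a real polynomial of bounded degree.

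Having reduced $f(T^n(t_0,\Gamma g_0))$, for $n \equiv b \bmod q$, to a convergent series $\sum_\nu c_\nu e(Q_{b,\nu}(m))$ with $\sum_\nu |c_\nu| < \infty$ uniformly and each $Q_{b,\nu} \in \mathbb{R}[m]$ of degree $\le 2$, I would apply Lemma \ref{3.DavLM} termwise: for each fixed $b$ and $\nu$, $\sum_{m: mq+b \le N}\mu(mq+b) e(Q_{b,\nu}(m)) \ll_A N/\log^A N$ with the implied constant uniform in the coefficients of $Q_{b,\nu}$ (this uniformity is exactly what Hua's lemma provides). Summing over $\nu$ against the absolutely summable weights $c_\nu$, then over the finitely many residues $b \bmod q$, yields the claimed bound with implied constant depending only on $A$ and $\alpha$.

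The main obstacle is the Gaussian/theta sum $\sum_{k\in\mathbb{Z}} e^{-\pi(y_n + k + j/m)^2} e(m k x_n)$ appearing in $\psi_{mj}$: one must show it can be expanded into an absolutely summable combination of pure exponentials $e(Q(m))$ in the variable $m$, uniformly in $b$, so that Hua's lemma applies term by term. This requires observing that as a function of the pair $(x_n \bmod 1, y_n \bmod 1)$ — which traces out an arithmetic progression on $\mathbb{T}^2$ as $m$ varies — the theta sum is the restriction of a fixed smooth function on $\mathbb{T}^2$, and then taking its (rapidly decaying) Fourier expansion; the smoothness and exponential decay of the Gaussian guarantee the needed absolute convergence, and everything is manifestly uniform because there are only finitely many residue classes $b$ and the Fourier coefficients of a fixed smooth function are fixed. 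The rest is careful algebra with the Heisenberg multiplication and a standard splitting of a sum over $n$ into arithmetic progressions.
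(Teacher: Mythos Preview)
Your approach is essentially the paper's: iterate $T$ explicitly, use the rationality of $\alpha$ so that the Birkhoff sums of $\varphi$, $\psi$ and $\varphi^2$ are affine in $n$ on each residue class modulo $q$, Fourier-expand the theta factor, and apply Lemma~\ref{3.DavLM} termwise with its crucial uniformity in the polynomial coefficients.

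One technical point needs correction. Your claim that the theta sum $\sum_{k} e^{-\pi(y+k+j/m)^2}e(mkx)$ is ``the restriction of a fixed smooth function on $\mathbb{T}^2$'' is false: it is $1$-periodic in $x$ but only \emph{quasi}-periodic in $y$, since replacing $y$ by $y+1$ multiplies the sum by $e(-mx)$. Hence it does not descend to $\mathbb{T}^2$ and cannot be Fourier-expanded there as you propose. The paper instead exploits the special feature of this skew product that $x_n-x_0=y_n-y_0=S_1(n;t_0)$, so the theta factor becomes a function $w$ of the \emph{single} real variable $u=S_1$; this $w$ is analytic and, up to an explicit phase that is quadratic in $u$ (hence absorbable into the polynomial $P$), periodic. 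Its one-variable Fourier expansion $w(u)=\sum_{m}\widehat{w}(m)e(mu)$ then has $\sum_m|\widehat{w}(m)|<\infty$. Substituting $u=S_1(n;t_0)$, which is affine in $n$ on each residue class, yields exactly the representation $\sum_\nu c_\nu\, e(P_\nu(n))$ with $\deg P_\nu\le 2$ and $\sum_\nu|c_\nu|<\infty$ that your outline requires, after which your termwise application of Lemma~\ref{3.DavLM} and summation over residues $b$ goes through unchanged.
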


\begin{proof}
For simplicity, we only consider a typical $f \in \mathcal{A}$ defined by
\begin{equation}\label{DEF/fA}
f\lb t, \Gamma \mat{x}{y}{z}\rb = e(t+x+y+z) \sum_{k \in \mathbb{Z}}e^{-\pi(y+k)^2}e(kx).
\end{equation}
A general $f$ can be treated the same way.

To compute $f(T^n(t_0,\Gamma g_0))$ via \eqref{DEF/fA},
we define, for $t \in \mathbb{T}$ and $n \geq 1$,
\begin{equation*}
S_1(n;t)=\sum_{l=0}^{n-1}\varphi(\alpha l + t),
\quad
S_2(n;t)=\sum_{l=0}^{n-1}\psi(\alpha l + t),
\quad
S_3(n;t)=\sum_{l=0}^{n-1}\varphi^2(\alpha l + t).
\end{equation*}
Also we set $S_1(0;t)=S_2(0;t)=S_3(0;t)=0$ for simplicity. A straightforward calculation gives that
\begin{equation}\label{Tn:t0}
T^n: (t_0,\Gamma g_0)\mapsto (t_0 + n \alpha, \Gamma g_n),
\end{equation}
where
\begin{eqnarray}\label{DEF/gn}
g_n  :=
g_0 \bmat{S_1(n;t_0)}{S_1(n;t_0)}{\frac{1}{2} (S_1(n;t_0))^2 - \frac{1}{2}S_3(n;t_0) + S_2(n;t_0)}.
\end{eqnarray}
Now write
\begin{eqnarray*}
g_0  =  \mat{x_0}{y_0}{z_0}, \quad
g_n  =  \mat{x_n}{y_n}{z_n}
\end{eqnarray*}
where without loss of generality we may assume that $x_0, y_0, z_0 \in [0,1)$,
so that \eqref{DEF/gn} becomes
\begin{equation*}
\begin{cases}
x_n=x_0+S_1(n; t_0), \\
y_n=y_0+S_1(n; t_0), \\
z_n=z_0+\frac{1}{2}(S_1(n; t_0))^2 - \frac{1}{2}S_3(n; t_0) + S_2(n; t_0) +y_0S_1(n; t_0).
\end{cases}
\end{equation*}
Substituting \eqref{Tn:t0} into \eqref{DEF/fA}, we obtain that
\begin{eqnarray}\label{Fur/fTnt0}
f(T^n(t_0,\Gamma g_0))
&=& f\left(t_0 + n \alpha, \Gamma \mat{x_n}{y_n}{z_n}\right) \nonumber\\
&=& e(t_0 + n \alpha +x_n+ y_n+z_n) \sum_{k \in \mathbb{Z}}e^{-\pi(y_n+k)^2}e(kx_n).
\end{eqnarray}

To analyze \eqref{Fur/fTnt0}, we define $w \colon \mathbb{R} \to \mathbb{R}$ by
\begin{equation*}
w(u)=\sum_{k \in \mathbb{Z}} e^{-\pi(u+y_0)^2}e(k(u+x_0)).
\end{equation*}
Then $w$ is an analytic periodic function with period $1$, and hence can be expanded into a Fourier series of the form
\begin{equation*}
w(u)=\sum_{m \in \mathbb{Z}} \widehat{w}(m)e(mu).
\end{equation*}
Plainly
$$
\sum_{m \in \mathbb{Z}} |\widehat{w}(m)| \ll 1,
$$
and the implied constant is absolute. With this function $w$, we can rewrite
\eqref{Fur/fTnt0} as
\begin{eqnarray*}
f(T^n(t_0,\Gamma g_0))
&=& \rho w(S_1(n;t_0)) \\
&& \times e\bigg((y_0+2)S_1(n;t_0)+ \frac{1}{2}(S_1(n;t_0))^2
- \frac{1}{2}S_3(n;t_0) + S_2(n;t_0) +  \alpha n \bigg)
\end{eqnarray*}
where $\rho :=e(t_0+x_0+y_0+z_0)$. By the Fourier expansion of $w$,
\eqref{Fur/fTnt0} finally takes the form
\begin{eqnarray*}\label{fTnt0Gg0}
&& f(T^n(t_0,\Gamma g_0)) \nonumber\\
&&=\rho \sum_{m \in \mathbb{Z}}\widehat{w}(m)e \bigg((y_0+m+2)S_1(n;t_0)
 + \frac{1}{2}(S_1(n;t_0))^2 - \frac{1}{2}S_3(n;t_0) + S_2(n;t_0) +  \alpha n\bigg).
\end{eqnarray*}

Now recall $\alpha \in \mathbb{Q}\cap [0,1)$ in the present situation,
so that we can write $\alpha = a/q$ with $0 \leq a < q$ and $(a,q)=1$.
Thus for any periodic function $h$ with period $1$, we have $h(l_1 \alpha + t_0) = h(l_2 \alpha + t_0)$
whenever $l_1 \equiv l_2 \bmod q$.
For $0 \leq b < q$ and any periodic function $h$ with period $1$, define
\begin{equation*}
\gamma(h,b)=\sum_{l=0}^{b-1} h(l \alpha + t_0)
\end{equation*}
and set $\gamma(h)=\gamma(h,q)/q$. Therefore, for $n \equiv b \bmod q$,
\begin{equation*}
\begin{cases}
S_1(n;t_0)=(n-b)\gamma(\varphi)+\gamma(\varphi,b), \\
S_2(n;t_0)=(n-b)\gamma(\psi)+\gamma(\psi,b), \\
S_3(n;t_0)=(n-b)\gamma(\varphi^2)+\gamma(\varphi^2,b).
\end{cases}
\end{equation*}
It follows from this and the last expression of $f(T^n(t_0,\Gamma g_0))$ that
\begin{equation}\label{3.Eq}
\sum_{n \leq N}\mu(n)f(T^n(t_0,\Gamma g_0)) = \rho \sum_{m \in \mathbb{Z}} \widehat{w}(m)
\sum_{b=0}^{q-1} \sum_{n \leq N \atop {n \equiv b \bmod q }} \mu(n)e(P(n;b)),
\end{equation}
where $P(n;b)$ is a real-valued polynomial in $n$ of degree $\leq 2$ with coefficients depending on $\alpha$, $b$ and $m$. However, by Lemma \ref{3.DavLM}, we have for arbitrary $A>0$ that
\begin{equation*}
\sum_{n \leq N \atop {n \equiv b \bmod q }} \mu(n)e(P(n;b)) \ll_A \frac{N}{\log^{A} N}
\end{equation*}
where the implied constant depending on $q$ (hence on $\alpha$) and $A$ only. Substituting this back to \eqref{3.Eq},
we obtain the desired estimate.
\end{proof}

\begin{PROP}\label{3*.MT}
Theorem \ref{1*.MT} holds for rational $\alpha$.
\end{PROP}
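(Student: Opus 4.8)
The plan is to deduce Proposition~\ref{3*.MT} from Propositions~\ref{3*.MP1} and \ref{3*.MP2}, combined with the density statement of Proposition~\ref{2*.MP}, by a routine approximation argument. First I would fix $\alpha \in \mathbb{Q}\cap[0,1)$, a base point $(t_0,\Gamma g_0)\in\TNil$, and $f\in C(\TNil)$, and reduce \eqref{1.Main} to the case where $f$ lies in the $\mathbb{C}$-linear span of $\mathcal{A}\cup\mathcal{B}$. Indeed, given $\varepsilon>0$, Proposition~\ref{2*.MP} supplies a $g$ in this span with $\|f-g\|_{\infty}<\varepsilon$, and since $|\mu(n)|\le 1$,
$$
\left|\frac{1}{N}\sum_{n=1}^{N}\mu(n)f(T^{n}(t_0,\Gamma g_0))\right|
\le \left|\frac{1}{N}\sum_{n=1}^{N}\mu(n)g(T^{n}(t_0,\Gamma g_0))\right| + \varepsilon .
$$
Letting $N\to\infty$ and then $\varepsilon\to 0$ shows that it suffices to prove \eqref{1.Main} with $f$ replaced by an arbitrary element of the span.

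Next, by linearity of the sum in $f$, it is enough to treat a single $f\in\mathcal{A}$ or a single $f\in\mathcal{B}$. If $f\in\mathcal{B}$, then \eqref{1.Main} is exactly Proposition~\ref{3*.MP1}, which holds for all $\alpha$ and in particular for rational $\alpha$ (there it was deduced via the factor map onto $(\mathbb{T}^3,\widetilde T)$ and Corollary~\ref{3.Cor}). If $f\in\mathcal{A}$, then Proposition~\ref{3*.MP2} gives, for every $A>0$,
$$
\sum_{n\le N}\mu(n)f(T^{n}(t_0,\Gamma g_0))\ll_{A,\alpha}\frac{N}{\log^{A}N},
$$
so after dividing by $N$ the expression tends to $0$; this is in fact a power-of-logarithm saving, far stronger than the qualitative statement needed. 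Combining the two cases yields \eqref{1.Main} for every $f$ in the span of $\mathcal{A}\cup\mathcal{B}$, and then the approximation step of the first paragraph extends it to every $f\in C(\TNil)$, which is Theorem~\ref{1*.MT} for rational $\alpha$.

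There is essentially no obstacle in this final assembly: all of the analytic substance has already been placed in Proposition~\ref{3*.MP1} (the reduction to skew products on $\mathbb{T}^3$ and the Huang--Wang--Ye input) and in Proposition~\ref{3*.MP2} (the Fourier expansion of the Gaussian theta factor together with Hua's estimate along arithmetic progressions). The only minor points to keep track of are that the error $\|f-g\|_{\infty}$ is independent of $N$ and of the orbit, so the approximation is uniform; and that the discrepancy between summing from $n=1$ and summing from $n=0$ in the various statements is a single bounded term, which is absorbed by the $1/N$ normalization. Hence the argument goes through without difficulty.
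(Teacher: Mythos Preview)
Your proposal is correct and follows exactly the paper's own approach: the paper's proof of Proposition~\ref{3*.MT} is the single sentence ``The desired result follows from Propositions~\ref{2*.MP}, \ref{3*.MP1} and \ref{3*.MP2},'' and you have simply spelled out the standard density-plus-linearity argument that underlies that sentence. There is no discrepancy in method or content.
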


\begin{proof}
The desired result follows from Propositions \ref{2*.MP}, \ref{3*.MP1} and \ref{3*.MP2}.
\end{proof}

\section{Rational approximations of $\alpha$ and further analysis}
From now on, we assume that $\alpha$ is irrational. In this section, we will decompose
$\varphi(t)$, $\varphi^2(t)$ and $\psi(t)$ into the sum of resonant and non-resonant
parts, and investigate them separately. For simplicity we write $\eta(t) \coloneqq \varphi^2(t)$.

Let
\begin{equation*}
\alpha=[0;a_1,a_2, \ldots , a_k, \ldots ]=\frac{1}{a_1+\frac{1}{a_2+\frac{1}{a_3+\ldots}}}
\end{equation*}
be the continued fraction expansion of $\alpha$. This expansion is infinite since $\alpha$ is irrational.
Let $l_k/q_k=[0;a_1,a_2, \ldots ,a_k]$ be the $k$-th convergent of $\alpha$.
Some well-known properties of $l_k/q_k$ are summarized in the following lemma.

\begin{LEM}\label{4*.CF}
Let $\alpha \in [0,1)$ be an irrational number, and $l_k/q_k$ the $k$-th convergent of $\alpha$.

\text{\rm (i)} We have $l_0=0, l_1=1, $ and $l_{k+2}=a_{k+2} l_{k+1}+ l_{k}$ for all $k \geq 0$.
We also have $q_0=1, q_1=a_1$, and $q_{k+2}=a_{k+2} q_{k+1}+ q_{k}$ for all $k \geq 0$.

\text{\rm (ii)} For any $k \geq 1$,
\begin{equation}\label{4.2/qkalp}
\frac{1}{2q_{k+1}} < \| q_k \alpha\| < \frac{1}{q_{k+1}}.
\end{equation}

\text{\rm (iii)} If $|\alpha - l/q | < 1/(2q^2)$ for some integer $l$ and some nonzero integer $q$,
then $l/q=l_k/q_k$ for some $k \geq 1$.
\end{LEM}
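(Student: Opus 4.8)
The statement to prove is Lemma 4.1, which collects standard facts about continued fractions: the recurrences for numerators and denominators of convergents, the two-sided estimate $\tfrac{1}{2q_{k+1}} < \|q_k\alpha\| < \tfrac{1}{q_{k+1}}$, and the converse that a good rational approximation must be a convergent. These are classical, so the plan is essentially to cite or reproduce the textbook arguments, being careful with the normalization $\alpha \in [0,1)$ (so that $a_0 = 0$ and $l_0/q_0 = 0/1$).

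For part (i), I would proceed by induction on $k$, using the standard matrix identity
\begin{equation*}
\begin{pmatrix} l_{k+1} & l_k \\ q_{k+1} & q_k \end{pmatrix}
= \begin{pmatrix} l_k & l_{k-1} \\ q_k & q_{k-1} \end{pmatrix}
\begin{pmatrix} a_{k+1} & 1 \\ 1 & 0 \end{pmatrix},
\end{equation*}
which unpacks to exactly the claimed recurrences $l_{k+2} = a_{k+2} l_{k+1} + l_k$ and $q_{k+2} = a_{k+2} q_{k+1} + q_k$ once one checks the base cases $l_0 = 0$, $q_0 = 1$ and $l_1 = 1$, $q_1 = a_1$ (here using $\alpha \in (0,1)$ so $[0; a_1] = 1/a_1$). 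A useful by-product of the same identity is $l_{k+1} q_k - l_k q_{k+1} = (-1)^k$, which I will want for parts (ii) and (iii).

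For part (ii), I would use the fact that $\alpha$ lies between consecutive convergents and that the tail of the continued fraction satisfies $\alpha = (l_{k+1}\beta_{k+2} + l_k)/(q_{k+1}\beta_{k+2} + q_k)$ where $\beta_{k+2} = [0; a_{k+2}, a_{k+3}, \dots]^{-1} = a_{k+2} + [0; a_{k+3}, \dots] \in (a_{k+2}, a_{k+2}+1)$. Substituting gives
\begin{equation*}
q_k \alpha - l_k = \frac{(-1)^k}{q_{k+1}\beta_{k+2} + q_k} = \frac{(-1)^k}{q_{k+2}' }
\end{equation*}
where $q_{k+2}' := q_{k+1}\beta_{k+2} + q_k$ satisfies $q_{k+1} < q_{k+2}' < q_{k+1} + q_{k+2} \le 2q_{k+2}$ (using $q_k < q_{k+1}$ and $q_{k+2} = a_{k+2}q_{k+1} + q_k$). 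Since $|q_k\alpha - l_k| < 1/q_{k+1} < 1/2 \le 1$ for $k \ge 1$, the nearest integer to $q_k\alpha$ is exactly $l_k$, so $\|q_k\alpha\| = |q_k\alpha - l_k| = 1/q_{k+2}'$, and the displayed bounds on $q_{k+2}'$ give precisely $\tfrac{1}{2q_{k+1}} < \|q_k\alpha\| < \tfrac{1}{q_{k+1}}$. The main thing to get right here is the bookkeeping on $\beta_{k+2}$ and the claim that $l_k$ realizes the nearest integer; neither is deep but the constants matter for the later sections.

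For part (iii), suppose $|\alpha - l/q| < 1/(2q^2)$ with $q \ge 1$; I may assume $\gcd(l,q) = 1$. Choose $k$ so that $q_k \le q < q_{k+1}$ (possible since $q_k \to \infty$; and $q \ge 1 = q_0$). The standard best-approximation property says $\|q_k\alpha\| \le \|q\alpha\|$ for all $q < q_{k+1}$, hence $\|q\alpha\| \ge \|q_k\alpha\| > 1/(2q_{k+1})$ by part (ii). On the other hand $\|q\alpha\| \le |q\alpha - l| = q|\alpha - l/q| < 1/(2q)$. If $l/q \ne l_k/q_k$ then $|l q_k - l_k q| \ge 1$, so
\begin{equation*}
\frac{1}{q q_k} \le \left| \frac{l}{q} - \frac{l_k}{q_k} \right| \le |\alpha - \tfrac{l}{q}| + |\alpha - \tfrac{l_k}{q_k}| < \frac{1}{2q^2} + \frac{1}{q_k q_{k+1}} \le \frac{1}{2q^2} + \frac{1}{q q_k},
\end{equation*}
using $q_{k+1} > q$; rearranging forces $0 < 1/(2q^2)$ with no contradiction directly, so instead I would argue via $\|q\alpha\| < 1/(2q) \le 1/(2q_k)$ combined with the best-approximation bound $\|q_k\alpha\| > 1/(2q_{k+1})$: this shows $q$ cannot be "between" two convergent denominators unless $q = q_k$, and then $|\alpha - l/q| < 1/(2q^2)$ together with $|\alpha - l_k/q_k| < 1/q_k q_{k+1} < 1/q_k^2$ and distinctness $|l/q - l_k/q| \ge 1/q_k^2$ yields a contradiction. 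The hard (or at least most delicate) part of the whole lemma is assembling part (iii) cleanly from the best-approximation theorem; alternatively, and more efficiently, I would simply invoke the classical theorem of Legendre (that $|\alpha - l/q| < 1/(2q^2)$ implies $l/q$ is a convergent) with a reference to a standard text such as Khinchin's \emph{Continued Fractions} or Hardy–Wright, since nothing here is new.
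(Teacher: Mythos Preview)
The paper does not prove this lemma at all: it is introduced as ``some well-known properties of $l_k/q_k$ are summarized in the following lemma'' and then simply stated. Your closing instinct --- to cite Khinchin or Hardy--Wright and move on --- is exactly the paper's approach, and is the right call for results this standard.

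That said, your sketch for part (ii) contains a genuine slip. From $\alpha = (l_{k+1}\beta_{k+2} + l_k)/(q_{k+1}\beta_{k+2} + q_k)$ one computes
\[
q_k\alpha - l_k = \frac{(q_k l_{k+1} - l_k q_{k+1})\,\beta_{k+2}}{q_{k+1}\beta_{k+2} + q_k} = \frac{(-1)^k \beta_{k+2}}{q_{k+1}\beta_{k+2} + q_k},
\]
not $(-1)^k/(q_{k+1}\beta_{k+2}+q_k)$ as you wrote; you dropped the factor $\beta_{k+2}$ in the numerator. Consequently your bound $q_{k+2}' < 2q_{k+2}$ only yields $\|q_k\alpha\| > 1/(2q_{k+2})$, which is too weak. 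With the correct formula one has $|q_k\alpha - l_k| = 1/(q_{k+1} + q_k/\beta_{k+2})$, and since $0 < q_k/\beta_{k+2} < q_k < q_{k+1}$ the denominator lies strictly between $q_{k+1}$ and $2q_{k+1}$, giving the desired two-sided bound. Your part (iii) argument is also left incomplete (you note yourself that the first displayed inequality produces no contradiction), so if you do want to include a proof rather than a citation, invoke the best-approximation theorem or Legendre's theorem directly rather than the half-finished estimates.
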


Let $\mathcal{Q} =\{q_k \, \colon \, k \geq 1\}$. For $B>2$, define
\begin{equation*}
\mathcal{Q}^{\flat} (B) = \{q_k \in \mathcal{Q} \, \colon \, q_{k+1} \leq q_{k} ^{B}\} \cup \{1\}
\end{equation*}
and
\begin{equation*}
\mathcal{Q}^{\sharp} (B) = \{q_k \in \mathcal{Q} \, \colon \, q_{k+1} > q_{k} ^{B} > 1\}.
\end{equation*}
Furthermore, we define
\begin{equation*}
M_1(B)=\bigcup_{q_k \in \mathcal{Q}^{\sharp} (B)}  \{m \in \mathbb{Z}\, \colon \, q_k \leq |m| < q_{k+1}, \ q_{k} \mid m\} \cup \{0\}
\end{equation*}
and define $M_2(B)=\mathbb{Z} \backslash M_1(B)$.
Now expand $\varphi$ into Fourier series
\begin{equation*}
\varphi(t)=\sum_{m \in \mathbb{Z}} \widehat{\varphi}(m)e(mt),
\end{equation*}
and further decompose $\varphi$ as
\begin{eqnarray}\label{4.phi12}
\varphi(t)
&=&\varphi_1(t)+\varphi_2(t) \nonumber\\
&\coloneqq& \sum_{m \in M_1(B)}\widehat{\varphi}(m)e(mt) + \sum_{m \in M_2(B)}\widehat{\varphi}(m)e(mt).
\end{eqnarray}
We call $\varphi_1$ and $\varphi_2$ the resonant part and the non-resonant part of $\varphi$, respectively.
We can do the same decompositions for $\eta$ and $\psi$, getting
\begin{equation}\label{4.etapsi12}
\eta(t)=\eta_1(t)+\eta_2(t), \quad \psi(t)=\psi_1(t)+\psi_2(t).
\end{equation}
Note that the above decompositions depend on the parameter $B$, though we do not make it explicit.

The following lemma is similar to \cite[Lemma 4.1]{LS15} or \cite[Lemma 5.2]{HWY19}.
We still give the proof here for completeness.
\begin{LEM}\label{4*.M2CONV}
Let $B>2$ and let $\{a(m)\}_{m \in \mathbb{Z}}$ be a sequence such that
\begin{equation}\label{4.BDa(m)}
|a(m)| \ll |m|^{-2B}.
\end{equation}
Then the series
\begin{equation*}
\sum_{m \in M_2(B)} \frac{a(m)}{e(m\alpha)-1}
\end{equation*}
is absolutely convergent.
\end{LEM}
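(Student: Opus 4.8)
The plan is to show that the series converges \emph{absolutely}, i.e.\ that $\sum_{m\in M_2(B)}|a(m)|/|e(m\alpha)-1|<\infty$. Since $|e(m\alpha)-1|=2|\sin\pi m\alpha|\ge 4\|m\alpha\|$, since $\|(-m)\alpha\|=\|m\alpha\|$ and $M_2(B)$ is symmetric under $m\mapsto -m$, and since $|a(\pm m)|\ll|m|^{-2B}$ by \eqref{4.BDa(m)}, it suffices to prove
\[
\sum_{\substack{m\in M_2(B)\\ m\ge 1}}\frac{|a(m)|}{\|m\alpha\|}<\infty .
\]
I would partition the positive integers into the consecutive blocks $[q_k,q_{k+1})$, $k\ge 0$. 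Only finitely many $m$ satisfy $m<q_2$, and since $\alpha$ is irrational these contribute a finite amount, so I may assume $m\ge q_2$; this forces $m$ to lie in a block $[q_k,q_{k+1})$ with $k\ge 2$, hence $q_k\ge 2$. Throughout I use two standard continued-fraction facts: the best-approximation inequalities $\|n\alpha\|\ge\|q_k\alpha\|$ for $1\le n<q_{k+1}$ and $\|n\alpha\|\ge\|q_{k-1}\alpha\|$ for $1\le n<q_k$, together with $\tfrac1{2q_{k+1}}<\|q_k\alpha\|<\tfrac1{q_{k+1}}$ from Lemma~\ref{4*.CF}(ii).

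For a block indexed by $q_k\in\mathcal{Q}^{\flat}(B)$ one has $[q_k,q_{k+1})\subseteq M_2(B)$, since $M_1(B)$ only removes multiples of $q_j$ from blocks indexed by $q_j\in\mathcal{Q}^{\sharp}(B)$ and these blocks are disjoint; and for every such $m$,
\[
\|m\alpha\|\ \ge\ \|q_k\alpha\|\ >\ \frac1{2q_{k+1}}\ \ge\ \frac1{2q_k^{B}}\ \ge\ \frac1{2|m|^{B}},
\]
so $|a(m)|/\|m\alpha\|\ll|m|^{-B}$ by \eqref{4.BDa(m)}. For a block indexed by $q_k\in\mathcal{Q}^{\sharp}(B)$, the indices surviving in $M_2(B)$ are exactly the $m\in[q_k,q_{k+1})$ that are \emph{not} multiples of $q_k$; write such an $m$ as $m=dq_k+r$ with $d\ge 1$ and $1\le r\le q_k-1$. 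Subadditivity of $\|\cdot\|$ together with $\|r\alpha\|\ge\|q_{k-1}\alpha\|>\tfrac1{2q_k}$ gives
\[
\|m\alpha\|\ \ge\ \|r\alpha\|-d\,\|q_k\alpha\|\ >\ \frac1{2q_k}-\frac{d}{q_{k+1}},
\]
which is useful only when $d$ is small, so I split on the size of $m$. If $m<q_{k+1}/4$ then $d\le m/q_k<q_{k+1}/(4q_k)$ and the last display yields $\|m\alpha\|>\tfrac1{4q_k}\ge\tfrac1{4|m|}$; if $m\ge q_{k+1}/4$ then $\|m\alpha\|\ge\|q_k\alpha\|>\tfrac1{2q_{k+1}}\ge\tfrac1{8|m|}$. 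In either case $\|m\alpha\|\gg 1/|m|$, hence $|a(m)|/\|m\alpha\|\ll|m|^{1-2B}$ by \eqref{4.BDa(m)}.

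Summing over all $m\ge q_2$, the blocks being pairwise disjoint, then gives
\[
\sum_{\substack{m\in M_2(B)\\ m\ge q_2}}\frac{|a(m)|}{\|m\alpha\|}\ \ll\ \sum_{m\ge 1}\bigl(|m|^{-B}+|m|^{1-2B}\bigr)\ <\ \infty ,
\]
since $B>2$ (indeed $B>1$ already suffices), which, together with the finitely many terms $m<q_2$, completes the argument. I expect the main difficulty to be organizational rather than analytic: one must pin down exactly which $m$ in a ``large-gap'' block $[q_k,q_{k+1})$ remain in $M_2(B)$, and notice that the cheap bound $\|m\alpha\|\ge\|q_k\alpha\|$ is too weak there — it only gives $\|m\alpha\|\gg 1/q_{k+1}$, and $q_{k+1}$ can be arbitrarily large compared with $q_k^{B}$ — so the decomposition $m=dq_k+r$ and the case split on $m$ versus $q_{k+1}/4$ are precisely what makes the estimate close. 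The handful of genuinely degenerate cases (where $q_k=1$) are harmlessly absorbed into the finitely many terms with $m<q_2$.
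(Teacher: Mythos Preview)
Your proof is correct. Both you and the paper reduce to bounding $\sum |a(m)|/\|m\alpha\|$ and partition $M_2(B)$ into the same two pieces: the non-multiples of $q_k$ within each block $[q_k,q_{k+1})$, and the multiples of $q_k$ in blocks with $q_k\in\mathcal{Q}^{\flat}(B)$; in the first case one shows $\|m\alpha\|\gg 1/|m|$. The difference is in how that key bound is obtained. The paper argues by contradiction from Legendre's theorem (Lemma~\ref{4*.CF}(iii)): if $\|m\alpha\|<1/(2|m|)$ then $m$ would have to be a multiple of some convergent denominator $q_j$ with $j<k$, and one then computes $\|m\alpha\|\ge 1/(2q_k)\ge 1/(2|m|)$ after all. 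You instead write $m=dq_k+r$ and argue directly via subadditivity of $\|\cdot\|$ together with a case split on whether $m<q_{k+1}/4$. Your route is slightly more hands-on but avoids invoking Legendre's theorem; the paper's contradiction argument is a bit slicker in that it treats all blocks uniformly without a threshold. For the $\mathcal{Q}^{\flat}$ blocks you also take a shortcut, bounding every $m$ in the block at once via $\|m\alpha\|\ge\|q_k\alpha\|>1/(2q_k^{B})\ge 1/(2|m|^{B})$, whereas the paper separates out the multiples of $q_k$ and estimates them individually; your version is simpler and, as you observe, already closes for any $B>1$.
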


\begin{proof}
By the inequality $|e(x)-1| \asymp \|x\|$ as well as the definition of $M_2(B)$,
it suffices to study
\begin{equation*}
S_1= \sum_{q_k \in \mathcal{Q}} \sum_{q_k \leq |m| < q_{k+1} \atop q_k \nmid m} \frac{|a(m)|}{\|m \alpha\|},
\quad
S_2= \sum_{q_k \in \mathcal{Q}^{\flat}(B)} \sum_{q_k \leq |m| < q_{k+1} \atop q_k \mid m} \frac{|a(m)|}{\|m \alpha\|}.
\end{equation*}

We start with $S_1$. Let $q_k \in \mathcal{Q}$ and let $q_k \leq |m| < q_{k+1}$ with $q_k \nmid m$.
We claim that $\|m \alpha\| \geq 1/(2|m|)$.
Assume on the contrary that $\|m \alpha \| <1/(2|m|)$.
Then there exists $s \in \mathbb{Z}$ such that $|m \alpha -s| <1/(2|m|)$.
Therefore, we have
$$
\bigg|\alpha - \frac{s}{m}\bigg| < \frac{1}{2m^2},
$$
and hence $s/m=l_j/q_j$ for some positive subscript $j$.
So $q_j \mid m$ and we can write $m=aq_j$. Since $|m| < q_{k+1}$, we have $j \leq k$.
But $q_k \nmid m$, and therefore $j < k$.
Finally,
\begin{equation*}
\bigg|\alpha - \frac{l_j}{q_j}\bigg|
= \bigg|\alpha -\frac{s}{m}\bigg|
< \frac{1}{2m^2} \quad \Rightarrow \quad |a||q_j \alpha - l_j|
< \frac{1}{2|m|}
\leq \frac{1}{2},
\end{equation*}
and therefore
\begin{equation*}
\|m \alpha \| =|a| \|q_j \alpha \| > \frac{a}{2q_{j+1}} \geq \frac{1}{2q_k} \geq \frac{1}{2|m|}.
\end{equation*}
This contradiction verifies the claim.

Combing the above claim with (\ref{4.BDa(m)}), we have
\begin{equation*}
\sum_{q_k \leq |m| < q_{k+1} \atop q_k \nmid m} \frac{|a(m)|}{\|m \alpha\|}
\ll \sum_{m\geq q_k} m^{-2B+1}
\ll q_k ^{-2B+2},
\end{equation*}
and hence $S_1$ is absolutely convergent provided $B>2$.

Next we estimate $S_2$.
Let $q_k \in \mathcal{Q}^{\flat}(B)$ and $q_{k} \leq |m| < q_{k+1}$ with $q_k \mid m$. Write $m=dq_k$.
Then $1 \leq |d| \leq q_{k+1}/q_k$, and
\begin{equation}\label{Con/qkalp}
|d|\|q_k \alpha \| \leq \frac{|d|}{q_{k+1}} \leq \frac{1}{q_k} \leq \frac{1}{2}.
\end{equation}
So $\|m \alpha \|$ is actually equal to $|d|\|q_k \alpha\|$. This together with \eqref{4.BDa(m)} and
\eqref{4.2/qkalp} gives
\begin{equation*}
\sum_{q_k \leq |m| < q_{k+1} \atop q_k \mid m} \frac{|a(m)|}{\|m \alpha\|}
\ll \sum_{d\geq 1} (dq_k)^{-2B} (dq_{k+1})
\ll q_{k} ^{-B} \sum_{d\geq 1} d^{-2B+1} \ll q_{k}^{-B},
\end{equation*}
where we have applied $q_{k+1} \leq q_{k}^{B}$. Hence $S_2$ is also absolutely convergent.
The proof is complete.
\end{proof}

Since $\varphi$ is assumed to be $C^{\infty}$-smooth, we have $\widehat{\varphi}(m) \ll |m|^{-2B}$
for any $B>0$. Therefore, by Lemma \ref{4*.M2CONV}, the function $g_{\varphi}$ defined by
\begin{equation*}
g_{\varphi}(t) =  \sum_{m \in M_2(B)} \widehat{\varphi}(m)\frac{e(mt)}{e(m\alpha)-1}
\end{equation*}
is a continuous periodic function with period $1$. Furthermore, we have
\begin{eqnarray}\label{4.gphi}
g_{\varphi}(t+\alpha)-g_{\varphi}(t)
&=&\sum_{m \in M_2(B)} \widehat{\varphi}(m)\frac{e(m(t+\alpha)-mt)}{e(m\alpha)-1} \nonumber\\
&=&\sum_{m \in M_2(B)} \widehat{\varphi} (m)e(mt)
=\varphi_2(t).
\end{eqnarray}
Similarly, there exist continuous periodic functions $g_{\eta}$ and $g_{\psi}$ such that
\begin{equation}\label{4.getapsi}
\eta_2(t)=g_{\eta}(t+\alpha)-g_{\eta}(t),
\quad
\psi_2(t)=g_{\psi}(t+\alpha)-g_{\psi}(t).
\end{equation}

\medskip

Next we investigate the resonant part. For $n \in \mathbb{N}$ and $t \in \mathbb{T}$, define
\begin{equation}\label{4.PHIEPSIdef}
\Phi_n(t)=\sum_{l=0}^{n-1}\varphi_1(l \alpha  + t), \quad
H_n(t)=\sum_{l=0}^{n-1}\eta_1(l \alpha  + t), \quad
\Psi_n(t)=\sum_{l=0}^{n-1}\psi_1(l \alpha + t).
\end{equation}
The following result is essentially \cite[Lemma 4.1]{Wan17}.

\begin{LEM}\label{4*.LEMBDPHI}
Let $B>2$. Then there exists a positive constant $C_1=C_1(B)$ depending on $B$ only, such that
the three inequalities
\begin{equation*}
\begin{cases}
|\Phi_{q_k} (t)-q_k \widehat{\varphi}(0)| \leq C_1 q_{k}^{- B+1},  \\
|H_{q_k} (t)-q_k \widehat{\eta}(0)| \leq C_1 q_{k}^{- B+1}, \\
|\Psi_{q_k} (t)-q_k \widehat{\psi}(0)| \leq C_1 q_{k}^{- B+1}
\end{cases}
\end{equation*}
hold simultaneously for all $t \in \mathbb{T}$ and all $q_k \in \mathcal{Q}^{\sharp}(B)$.
\end{LEM}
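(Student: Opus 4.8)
The plan is to prove the three inequalities in parallel: the argument uses nothing about $\varphi$, $\eta=\varphi^{2}$, $\psi$ beyond their $C^{\infty}$-smoothness and Lemma~\ref{4*.CF}, so I would carry it out for $\Phi_{q_{k}}$ and then take $C_{1}$ to be the largest of the three constants produced. Expanding $\varphi_{1}(t)=\sum_{m\in M_{1}(B)}\widehat{\varphi}(m)e(mt)$ and using \eqref{4.PHIEPSIdef}, the frequency $m=0$ contributes exactly $q_{k}\widehat{\varphi}(0)$ to $\Phi_{q_{k}}(t)$; hence, by the triangle inequality and $|e(mt)|=1$, the problem reduces to the $t$-uniform bound
\[
\bigl|\Phi_{q_{k}}(t)-q_{k}\widehat{\varphi}(0)\bigr|\ \le\ \sum_{m\in M_{1}(B),\ m\ne 0}|\widehat{\varphi}(m)|\;\Bigl|\sum_{l=0}^{q_{k}-1}e(ml\alpha)\Bigr|,
\]
where I would bound the inner geometric sum by the classical $\min\bigl(q_{k},\,\tfrac{1}{2\|m\alpha\|}\bigr)$.

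Next I would organise the outer sum along the blocks $D_{j}=\{m:\ q_{j}\le|m|<q_{j+1},\ q_{j}\mid m\}$, one per $q_{j}\in\mathcal{Q}^{\sharp}(B)$, so that $M_{1}(B)\setminus\{0\}$ is the pairwise disjoint union $\bigcup_{j}D_{j}$. For $m=dq_{j}\in D_{j}$ one has $1\le|d|<q_{j+1}/q_{j}$, and since $|d|\,\|q_{j}\alpha\|<1/q_{j}\le\tfrac12$ by \eqref{4.2/qkalp}, it follows that $\|m\alpha\|=|d|\,\|q_{j}\alpha\|>|d|/(2q_{j+1})$; moreover $|\widehat{\varphi}(m)|\ll_{N}|m|^{-N}$ for every $N$, where $N$ will be fixed at the end as a sufficiently large function of $B$. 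I would then split according to how the block index $j$ compares with the fixed $k$, recalling that $q_{k}\in\mathcal{Q}^{\sharp}(B)$, so $q_{k+1}>q_{k}^{B}$. If $j>k$, every $m\in D_{j}$ has $|m|\ge q_{k+1}>q_{k}^{B}$, so bounding the exponential sum trivially by $q_{k}$ and summing $|\widehat{\varphi}(m)|$ over all $|m|>q_{k}^{B}$ gives a contribution $\ll q_{k}^{\,1+B(1-N)}$. If $j=k$, bounding the exponential sum by $q_{k}$ and summing over $d$ gives $\ll q_{k}^{\,1-N}$. If $j<k$, then $q_{j+1}\le q_{k}$, whence $\min\bigl(q_{k},\tfrac{1}{2\|m\alpha\|}\bigr)\le\tfrac{1}{2\|m\alpha\|}<q_{j+1}/|d|\le q_{k}/|d|$, and summing over $d$ bounds the contribution of block $j$ by $\ll q_{j}^{-N}q_{j+1}$; since $q_{j}\in\mathcal{Q}^{\sharp}(B)$ forces $q_{j}^{B}<q_{j+1}\le q_{k}$, i.e.\ $q_{j}<q_{k}^{1/B}$, this is $\ll q_{k}^{\,1-N/B}$, and as there are only $O(\log q_{k})$ blocks with $j<k$ (the $q_{j}$ grow at least geometrically by Lemma~\ref{4*.CF}(i)), the $j<k$ regime contributes $\ll(\log q_{k})\,q_{k}^{\,1-N/B}$ altogether.

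It would then remain only to fix $N$ large in terms of $B$ --- any integer $N\ge B^{2}+B+2$ works, using $\log q_{k}\le q_{k}$ --- so that each of the three contributions is at most $\tfrac13 C_{1}q_{k}^{-B+1}$; this uses only $\varphi\in C^{\infty}$. Summing the three regimes yields $|\Phi_{q_{k}}(t)-q_{k}\widehat{\varphi}(0)|\le C_{1}q_{k}^{-B+1}$ uniformly in $t\in\mathbb{T}$ and in $q_{k}\in\mathcal{Q}^{\sharp}(B)$, and the identical computation with $\widehat{\eta}$ and $\widehat{\psi}$ in place of $\widehat{\varphi}$ (both $\eta=\varphi^{2}$ and $\psi$ being $C^{\infty}$) gives the other two inequalities.

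I expect the regime $j<k$ to be the crux. There the lazy estimate --- bounding the exponential sum by $q_{k}$ --- is worthless, because a small $q_{j}$ provides only weak Fourier decay relative to the size of $q_{k}$; one must instead use two distinct facts at once: that $j<k$ forces $q_{j+1}\le q_{k}$, which is exactly what legitimises the cancellation estimate $1/\|m\alpha\|\lesssim q_{j+1}/|d|\le q_{k}/|d|$, and that $q_{j}\in\mathcal{Q}^{\sharp}(B)$ forces $q_{j}<q_{k}^{1/B}$, which turns the $C^{\infty}$ decay $q_{j}^{-N}$ into a genuine power saving $q_{k}^{-N/B}$ in $q_{k}$. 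Only with both in hand can the argument be closed by taking $N$ large relative to $B$; the regimes $j\ge k$ and the passage from $\varphi$ to $\eta$ and $\psi$ are routine.
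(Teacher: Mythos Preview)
There is a genuine gap in the regime $j<k$, exactly where you flagged the difficulty. You bound the block-$j$ contribution by $\ll q_{j}^{-N}q_{j+1}$ and then claim this is $\ll q_{k}^{1-N/B}$ via $q_{j}<q_{k}^{1/B}$. But that inequality points the wrong way: from $q_{j}<q_{k}^{1/B}$ one gets $q_{j}^{-N}>q_{k}^{-N/B}$, a \emph{lower} bound on $q_{j}^{-N}$, not an upper one. Worse, your block-$j$ bound $q_{j}^{-N}q_{j+1}$ does not depend on $k$ at all, so no choice of $N=N(B)$ can force it below $C_{1}q_{k}^{-B+1}$ uniformly in $q_{k}\in\mathcal{Q}^{\sharp}(B)$. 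Concretely, fix the smallest $q_{j_{0}}\in\mathcal{Q}^{\sharp}(B)$; for every $k>j_{0}$ your estimate contributes the fixed positive number $q_{j_{0}}^{-N}q_{j_{0}+1}$, while the target $q_{k}^{-B+1}\to 0$.

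The missing ingredient is the numerator of the geometric sum. Replacing your $\min\bigl(q_{k},\tfrac{1}{2\|m\alpha\|}\bigr)$ by the exact evaluation
\[
\Bigl|\sum_{l=0}^{q_{k}-1}e(ml\alpha)\Bigr|\ \asymp\ \frac{\|mq_{k}\alpha\|}{\|m\alpha\|},
\]
and, for $m=dq_{j}$ with $j<k$, using $\|mq_{k}\alpha\|\le|m|\,\|q_{k}\alpha\|<|m|/q_{k+1}$, brings in the crucial factor $1/q_{k+1}<q_{k}^{-B}$ (this is where the hypothesis $q_{k}\in\mathcal{Q}^{\sharp}(B)$ actually enters). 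That extra factor is precisely what makes the small-$j$ blocks decay in $q_{k}$; your $\min$ bound discards it by replacing $\|mq_{k}\alpha\|$ with the trivial $\tfrac12$. Your treatment of $j\ge k$ and the reduction to a single function are fine.
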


\begin{proof}
We only prove the first inequality; proof of the other two is similar.
Fix $q_k \in \mathcal{Q}^{\sharp} (B)$ and $t \in \mathbb{T}$.
We have
\begin{equation*}
\Phi_{q_{k}}(t)
=\sum_{l=0}^{q_k -1}\sum_{m \in M_1(B)} \widehat{\varphi}(m)e(m(l \alpha + t))
=\sum_{m \in M_1(B)}\widehat{\varphi}(m)e(mt)\frac{e(mq_k \alpha)-1}{e(m \alpha) - 1}
\end{equation*}
by interchanging summations. Since $|e(x)-1| \asymp \|x\|$, we need to estimate
\begin{equation}
\sum_{q_j \in \mathcal{Q}^{\sharp}(B)} \sum_{q_j \leq |m| < q_{j+1} \atop q_j \mid m} |\widehat{\varphi}(m)|
 \frac{\| mq_k \alpha\|}{\| m \alpha \|}
\leq
\sum_{q_j \in \mathcal{Q}^{\sharp}(B)} \sum_{d=1}^{{q_{j+1}/q_j}}
|\widehat{\varphi}(dq_j)|  \frac{\| dq_jq_k \alpha\|}{\|  d q_j\alpha \|}.
\end{equation}
Recall that $\varphi$ is $C^{\infty}$-smooth, and therefore $\widehat{\varphi}(m) \ll |m|^{-D}$
for any $D>0$. The value of $D$ will be specified later in terms of $B$. We consider two
cases separately according as $j<k$ or not.

First assume that $j<k$. Similarly to \eqref{Con/qkalp} we can prove $d \|q_j \alpha\| < 1/2$
and hence $\| d q_j\alpha \| = d\|q_j \alpha\|$.
Plainly $\| aq_jq_k \alpha \| \leq dq_j\|q_k \alpha\|$, and hence
\begin{eqnarray*}
|\widehat{\varphi}(dq_j)|  \frac{\| dq_jq_k \alpha\|}{\|  d q_j\alpha \|}
&\ll&
|dq_j|^{-D}  \frac{ dq_j\|q_k \alpha\|}{  d \|q_j\alpha \|}
\ll
(dq_j)^{-D} \frac{q_jq_{j+1}}{q_{k+1}} \\
&\ll&
\frac{q_k}{q_{k+1}} (dq_j)^{-D+1}.
\end{eqnarray*}
It follows that
\begin{eqnarray*}
\sum_{q_j \in \mathcal{Q}^{\sharp}(B) \atop j < k}\sum_{d=1}^{q_{j+1}/q_j}|\widehat{\varphi}(dq_j)|  \frac{\| dq_jq_k \alpha\|}{\|  d q_j\alpha \|}
\ll
\frac{q_k}{q_{k+1}} \sum_{l \geq 1} l^{-D+1} \ll q_k ^{-B+1}
\end{eqnarray*}
provided that $D>3$.

Now we suppose $j \geq k$. We have
\begin{eqnarray*}
|\widehat{\varphi}(d q_j)|  \frac{\| d q_jq_k \alpha\|}{\|  d q_j\alpha \|}
\ll (d q_j)^{-D}q_k
\ll (d q_j)^{-D+1},
\end{eqnarray*}
and hence
\begin{equation*}
\sum_{q_j \in \mathcal{Q}^{\sharp}(B)}\sum_{d=1}^{q_{j+1}/q_j} |\widehat{\varphi}(dq_j)|
\frac{\| aq_jq_k \alpha\|}{\|  a q_j\alpha \|}
\ll
\sum_{l\geq q_k} l^{-D+1}
\ll q_k^{-D+2}
\end{equation*}
which is $\ll q_k^{-B+1}$ provided that $D>B+1$. The first inequality of the lemma now follows on taking
$D=B+3$.
\end{proof}

\section{Measure complexity}
To prove Theorem \ref{1*.MT} for irrational $\alpha$, we will use the concept of
measure complexity introduced in \cite{HWY19}. In this section, we will collect some concepts and facts from \cite{HWY19} without proof.

Let $(X,T)$ be a flow, and $M(X,T)$ the set of all $T$-invariant Borel probability
measures on $X$. A metric  $d$ on $X$ is called compatible if the topology induced by $d$
is the same as the given topology on $X$. For a compatible metric $d$ and an $n \in \mathbb{N}$, define
\begin{equation}\label{Def/bdnx}
\overline{d}_n (x,y) = \frac{1}{n} \sum_{j=0}^{n-1} d(T^j x, T^j y)
\end{equation}
for $x,y \in X$. Then for $\varepsilon >0$ let
$$
B_{\overline{d}_n}(x,\varepsilon) = \{y \in X \, \colon \, \overline{d}_n(x,y) < \varepsilon\},
$$
with which we can further define, for $\rho \in M(X,T)$,
\begin{eqnarray*}
&& s_n(X,T,d,\rho,\varepsilon) \\
&&= \min \bigg\{m \in \mathbb{N}: \exists x_1, \ldots, x_m \in X
\ \text{such that} \ \rho\bigg(\bigcup _{j=1}^m B_{\overline{d}_n}(x_j,\varepsilon)\bigg) > 1 -\varepsilon \bigg\}.
\end{eqnarray*}
Let $(X,d,T,\rho)$ be as above, and let $\{u(n)\}_{n\geq 1}$ be an increasing sequence satisfying $1\leq u(n)\to \infty$ as
$n\to\infty$.
We say that the measure complexity of $(X,d,T,\rho)$ is weaker than $u(n)$ if
\begin{equation*}
\liminf_{n \to \infty} \frac{s_n(X,T,d,\rho,\varepsilon)}{u(n)}=0
\end{equation*}
for any $\varepsilon >0$. In view of Lemma \ref{5*.2} below,
this property is independent of the choice of compatible metrics. Hence we can say instead that
the measure complexity of $(X,T,\rho)$ is weaker than $u(n)$. We say the
measure complexity of $(X,T,\rho)$ is \emph{sub-polynomial} if the measure
complexity of $(X,T,\rho)$ is weaker than $n^{\tau}$ for any $\tau >0$. We are going to need
the following result, which is the main theorem of \cite{HWY19}.

\begin{LEM}\label{5*.HWYTM}
If the measure complexity of $(X,T,\rho)$ is sub-polynomial for any $\rho \in M(X,T)$,
then the M\"{o}bius Disjointness Conjecture holds for $(X,T)$.
\end{LEM}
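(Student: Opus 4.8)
The plan is to derive this from the short-interval multiplicative estimates of Matom\"{a}ki--Radziwi\l{}\l{}--Tao \cite{MRT15} combined with a covering argument powered by the sub-polynomial complexity hypothesis. Fix a compatible metric $d$ on $X$. Since the $L$-Lipschitz functions (with respect to $d$) are uniformly dense in $C(X)$ and $\frac1N\sum_{n\le N}|\mu(n)|\le 1$, it suffices to establish \eqref{def/DISJO} for a fixed real-valued, $1$-bounded, $L$-Lipschitz $f$ and a fixed $x\in X$; put $a_n=f(T^nx)$, so $|a_n|\le 1$. The first step is to invoke \cite{MRT15} in the packaged form of a disjointness criterion: $\mu$ is linearly disjoint from the bounded sequence $(a_n)$ provided that, for intervals of a length $H$ which may be sent to infinity (slowly) with $N$, the blocks $(a_{n+1},\dots,a_{n+H})$ are, for all but an $o(1)$-fraction of $n\le N$, within $o(1)$ — in the normalised metric $\frac1H\sum_{h\le H}|\cdot|$ — of one of only sub-polynomially-in-$H$ many ``templates''. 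The mechanism behind the criterion is the standard one: one bounds $\frac1N\sum_{n\le N}\mu(n)a_n$ by an average of short Möbius sums $\frac1H\sum_{h\le H}\mu(n+h)a_{n+h}$, and the bounds on multiplicative functions in short intervals produce the cancellation once $(a_{n+h})_{h\le H}$ is approximable by boundedly many patterns relative to $H$.

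The second step supplies precisely this block structure from the hypothesis. Given $x$, let $\nu_N=\frac1N\sum_{n=1}^N\delta_{T^nx}$ and pass to a weak-$*$ limit $\rho$ of $\{\nu_N\}$ along some subsequence $N_k\to\infty$; then $\rho\in M(X,T)$, so the measure complexity of $(X,T,\rho)$ is sub-polynomial, and by Lemma~\ref{5*.2} it may be computed with the chosen metric $d$. Fix $\varepsilon'>0$, a tolerance $\tau>0$, and a scale $H$ lying in a sequence of scales along which $s_H(X,T,d,\rho,\varepsilon')\le H^{\tau}$ (such scales exist because the relevant $\liminf$ vanishes). Choose $x_1,\dots,x_s$ with $s=s_H(X,T,d,\rho,\varepsilon')$ and $\rho\big(\bigcup_{j=1}^{s}B_{\overline{d}_H}(x_j,\varepsilon')\big)>1-\varepsilon'$. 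If $T^nx\in B_{\overline{d}_H}(x_j,\varepsilon')$ then $\frac1H\sum_{h=0}^{H-1}d(T^{n+h}x,T^hx_j)<\varepsilon'$, whence by the Lipschitz bound $\frac1H\sum_{h=0}^{H-1}|a_{n+h}-f(T^hx_j)|<L\varepsilon'$; that is, the block $(a_{n+h})_{0\le h<H}$ lies within $L\varepsilon'$, in the normalised metric, of the template $b^{(j)}=(f(T^hx_j))_{0\le h<H}$. Since $\nu_{N_k}\to\rho$ and each ball $B_{\overline{d}_H}(x_j,\varepsilon')$ is open in $X$ (as $\overline{d}_H$ is a compatible metric), a portmanteau argument — after an innocuous enlargement of the balls — gives that, for $k$ large, at least a $(1-2\varepsilon')$-fraction of $n\le N_k$ satisfy $T^nx\in B_{\overline{d}_H}(x_j,\varepsilon')$ for some $j$. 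Thus the number of templates is $s\le H^{\tau}$, matching the smallness demanded by the criterion.

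Combining the two steps, for each scale $H$ in the good sequence the criterion yields $\limsup_k\big|\frac1{N_k}\sum_{n\le N_k}\mu(n)a_n\big|\le c\,L\varepsilon'+2\varepsilon'+o_{H\to\infty}(1)$; letting $H\to\infty$ along that sequence and then $\varepsilon'\to 0$ forces $\frac1{N_k}\sum_{n\le N_k}\mu(n)a_n\to 0$. Finally, since an arbitrary subsequence of $\{N\}$ admits a further subsequence along which $\nu_N$ converges to some $\rho'\in M(X,T)$, to which the same argument applies, every subsequence of $\big\{\frac1N\sum_{n\le N}\mu(n)a_n\big\}$ has a further subsequence tending to $0$; hence the full sequence tends to $0$, which is \eqref{def/DISJO}.

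The number-theoretic content — cancellation of $\mu$ over short intervals — is entirely carried by \cite{MRT15}, so the real work is the dynamical packaging, and that is where I expect the difficulty. Two points need care. First, one must pin down the admissible range of scales $H=H(N)$: it has to tend to infinity, remain within the window (roughly $H\le N^{o(1)}$, or a Matom\"{a}ki--Radziwi\l{}\l{} range such as $N^{\varepsilon'}\le H\le N/(\log N)^{A}$) where the short-interval estimates are unconditional, and simultaneously lie in the sequence of scales furnished by the $\liminf$ in the sub-polynomiality hypothesis for $\rho$; reconciling these constraints, together with checking that the template count $s\le H^{\tau}$ feeds only mildly into the MRT bound, is the crux. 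Second, the averaged Birkhoff metric $\overline{d}_H$ records an average over the \emph{future} of the orbit, so one must verify that ``$T^nx$ lies in a covering ball'' can be converted, uniformly over $n\le N_k$, into the exact normalised block approximation the criterion consumes, and that the $\rho$-measure statement transfers to the empirical measures $\nu_{N_k}$ with only an $O(\varepsilon')$ loss. Both are technical but, granting \cite{MRT15}, should be manageable.
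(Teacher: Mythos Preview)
The paper does not give its own proof of this lemma: it is quoted verbatim as ``the main theorem of \cite{HWY19}'' and used as a black box, with a one-sentence explanation that the averaged Chowla estimate of Matom\"{a}ki--Radziwi\l\l--Tao \cite{MRT15} is the number-theoretic engine. So there is nothing in the present paper to compare your argument against.

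That said, your sketch is a faithful outline of how the result is actually proved in \cite{HWY19}: pass from an arbitrary orbit to an invariant measure via weak-$*$ limits of empirical measures, use the sub-polynomial complexity of that measure to cover most of the orbit (in the $\overline{d}_H$ sense) by $\le H^{\tau}$ many $\varepsilon'$-templates, translate this via a Lipschitz test function into the statement that the short blocks $(a_{n+h})_{h<H}$ fall into few clusters, and then feed this into the MRT short-interval bound. Your identification of the two delicate points --- synchronising the scales $H$ permitted by MRT with the scales along which the $\liminf$ in the complexity definition is small, and transferring the $\rho$-measure covering statement back to the empirical measures $\nu_{N_k}$ via portmanteau --- is exactly right, and both are handled in \cite{HWY19} essentially as you suggest. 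In short: your proposal is correct in outline and matches the source argument; for the purposes of the present paper it would suffice simply to cite \cite{HWY19}.
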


We explain the number theory behind Lemma \ref{5*.HWYTM}. The measure complexity defined above
can be viewed as an averaged form of entropy, and it is well-known that Chowla's conjecture implies
the M\"obius Disjointness Conjecture. In \cite{MRT15}, Matom\"{a}ki, Radziwill and Tao
established an averaged form of Chowla's conjecture. This allows to use the measure
complexity defined here, rather than the original topological entropy, to investigate the M\"{o}bius disjointness.

\medskip

Let $(X,T)$ and $(Y,S)$ be two flows, and $d$ and $d'$ the metrics on $X$ and $Y$
respectively. Let $\rho \in M(X,T)$ and $\nu \in M(Y,S)$. Let $\mathcal{B}_X$ and $\mathcal{B}_Y$ be the
Borel $\sigma$-algebras of $X$ and $Y$ respectively. We say $(X,\mathcal{B}_X,T,\rho)$ is measurably
isomorphic to $(Y,\mathcal{B}_Y,S,\nu)$, if there exist $X' \subset X$, $Y' \subset Y$
with $\rho(X')=\rho(Y')=1$ and $TX' \subset X'$, $SY' \subset Y'$, and an invertible
measure-preserving map $\phi: X' \rightarrow Y'$ such that $\phi \circ T (x) = S \circ \phi(x)$
for any $x \in X'$. The following proposition is \cite[Proposition 2.2]{HWY19}, which is
important when calculating the measure complexity.

\begin{LEM}\label{5*.2}
Let $\{u(n)\}_{n\geq 1}$ be an increasing sequence satisfying $1\leq u(n)\to \infty$ as
$n\to\infty$. Assume that $(X,\mathcal{B}_X,T,\rho)$ is measurably isomorphic to $(Y, \mathcal{B}_Y, S, \nu)$.
Then the measure complexity of $(X,d,T,\rho)$ is weaker than $u(n)$ if and only if the measure complexity of $(Y, d', S, \nu)$ is weaker than $u(n)$.
\end{LEM}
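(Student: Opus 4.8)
The plan is to reduce the biconditional to a single implication and then transport coverings through $\phi$ by combining Lusin's theorem with a Markov-inequality estimate on orbit statistics. Since the hypothesis is symmetric (it may be applied to $\phi^{-1}$), it suffices to prove: if the measure complexity of $(X,d,T,\rho)$ is weaker than $u(n)$, then so is that of $(Y,d',S,\nu)$. Throughout I may assume $d\le 1$ and $d'\le 1$. The obstruction to a naive argument is that $\phi$ is only measurable, hence need not carry $\overline{d}_n$-balls into $\overline{d'}_n$-balls; the idea is to find one compact set on which $\phi$ is uniformly continuous and to show that, for measure-theoretically typical points, most of the first $n$ orbit steps land in it.

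Fix $\varepsilon'>0$. By Lusin's theorem, valid because $\rho$ is a Radon probability measure on the compact metric space $X$, I would choose a compact $K\subset X'$ with $\rho(K)>1-\delta$ such that $\phi|_K$ is continuous, hence uniformly continuous; let $\delta_0>0$ be such that $d'(\phi x,\phi y)<\varepsilon'/4$ whenever $x,y\in K$ with $d(x,y)<\delta_0$. Here $\delta>0$ is a parameter to be fixed at the very end. For $n\in\mathbb{N}$ put
\[
G_n=\Big\{x\in X':\ \#\{0\le j<n:\ T^jx\notin K\}\le \sqrt{\delta}\,n\Big\}.
\]
Because $\rho$ is $T$-invariant, $\int \frac1n\sum_{j=0}^{n-1}\mathbf{1}_{K^c}(T^jx)\,\mathrm{d}\rho(x)=\rho(K^c)<\delta$, so Markov's inequality gives $\rho(G_n)>1-\sqrt{\delta}$ for every $n$, uniformly in $n$ and without the ergodic theorem; note also that $T^jx\in X'$ for $x\in X'$ and $j\ge 0$, since $TX'\subset X'$.

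Now transport coverings. Let $m=s_n(X,T,d,\rho,\varepsilon)$ for a small $\varepsilon>0$ to be chosen, and let $B_{\overline{d}_n}(x_1,\varepsilon),\dots,B_{\overline{d}_n}(x_m,\varepsilon)$ have union of $\rho$-measure exceeding $1-\varepsilon$. Set $C_i=B_{\overline{d}_n}(x_i,\varepsilon)\cap G_n\cap X'$, so $\bigcup_i C_i$ has $\rho$-measure $>1-\varepsilon-\sqrt{\delta}$, and pick $p_i\in C_i$ for each nonempty $C_i$. For $q\in C_i$ one has $\overline{d}_n(q,p_i)<2\varepsilon$, and, using $\phi\circ T=S\circ\phi$ on $X'$,
\[
\overline{d'}_n(\phi q,\phi p_i)=\frac1n\sum_{j=0}^{n-1}d'\big(\phi(T^jq),\phi(T^jp_i)\big).
\]
Splitting $[0,n)$ into the indices with both $T^jq,T^jp_i\in K$ and the rest: the rest has at most $2\sqrt{\delta}\,n$ indices (since $q,p_i\in G_n$) and contributes at most $2\sqrt{\delta}$; among the remaining indices, at most $\tfrac{2\varepsilon}{\delta_0}n$ satisfy $d(T^jq,T^jp_i)\ge\delta_0$ (Markov applied to $\overline{d}_n(q,p_i)<2\varepsilon$), contributing at most $\tfrac{2\varepsilon}{\delta_0}$, while the rest contribute at most $\varepsilon'/4$ by uniform continuity. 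Hence $\overline{d'}_n(\phi q,\phi p_i)\le 2\sqrt{\delta}+\tfrac{2\varepsilon}{\delta_0}+\tfrac{\varepsilon'}{4}$. Choose $\delta$ with $2\sqrt{\delta}<\varepsilon'/4$ (this fixes $K$ and $\delta_0$), then $\varepsilon$ with $\tfrac{2\varepsilon}{\delta_0}<\varepsilon'/4$ and $\varepsilon+\sqrt{\delta}<\varepsilon'$. Then $\phi\big(\bigcup_i C_i\big)\subset\bigcup_{i:\,C_i\ne\emptyset}B_{\overline{d'}_n}(\phi p_i,\varepsilon')$, a union of at most $m$ balls, and since $\phi$ is an invertible measure-preserving map this image has $\nu$-measure $>1-\varepsilon-\sqrt{\delta}>1-\varepsilon'$. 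Therefore $s_n(Y,S,d',\nu,\varepsilon')\le s_n(X,T,d,\rho,\varepsilon)$ for all $n$, whence $0\le\liminf_n s_n(Y,S,d',\nu,\varepsilon')/u(n)\le\liminf_n s_n(X,T,d,\rho,\varepsilon)/u(n)=0$; as $\varepsilon'>0$ was arbitrary, the measure complexity of $(Y,d',S,\nu)$ is weaker than $u(n)$, and the reverse implication follows by running the argument with $\phi^{-1}$.

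The step I expect to be the main obstacle is exactly this transport: reconciling Lusin's compact set $K$ with the orbit statistics so that, simultaneously, at almost every time $j<n$ the pair $(T^jq,T^jp_i)$ lies in $K\times K$ and is $d$-close. The good sets $G_n$, together with the elementary observation that a small $\overline{d}_n$-distance forces few large single-step $d$-displacements, are what make the three error terms independently controllable; the only delicate bookkeeping is to fix the quantifiers in the correct order, first $\varepsilon'$, then $\delta$ (hence $K$ and $\delta_0$), and only then $\varepsilon$.
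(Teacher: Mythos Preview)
The paper does not give its own proof of this lemma; it simply imports it as \cite[Proposition 2.2]{HWY19}. Your argument is correct and is essentially the standard route for such invariance statements: Lusin's theorem furnishes a large compact set $K$ on which $\phi$ is uniformly continuous, $T$-invariance of $\rho$ plus Markov controls the frequency of visits to $K^c$ uniformly in $n$, and a second application of Markov to $\overline{d}_n(q,p_i)<2\varepsilon$ bounds the number of timesteps with large single-step displacement. Your bookkeeping of quantifiers (first $\varepsilon'$, then $\delta$ and with it $K,\delta_0$, finally $\varepsilon$) is the crucial point and is done correctly; the only cosmetic remark is that the normalization $d,d'\le 1$ is harmless because $X,Y$ are compact, so the metrics are bounded and rescaling merely rescales the $\varepsilon$'s without affecting the $\liminf$ condition.
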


\section{Theorem \ref{1*.MT} for irrational $\alpha$}

The purpose of this section is to prove the next result.

\begin{PROP}\label{6*.MT}
Let $(\TNil,T)$ be as in Theorem \ref{1*.MT} with $\alpha$ irrational. Then the measure complexity of $(\TNil, T, \rho)$ is sub-polynomial for any $\rho \in M(\TNil,T)$.
\end{PROP}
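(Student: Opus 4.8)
The plan is to reduce the computation of the measure complexity of $(\TNil, T, \rho)$ to that of a much simpler system, namely a skew product on a finite-dimensional torus, for which sub-polynomial measure complexity can be checked directly. Since by Lemma~\ref{5*.2} the measure complexity is a measurably isomorphic invariant, the first step is to build a measurable isomorphism between $(\TNil, \mathcal{B}, T, \rho)$ and some $(\mathbb{T} \times \mathbb{T}^3, \mathcal{B}', \widehat{T}, \widehat{\rho})$ (or a skew product over $\mathbb{T}$ with abelian fibres), after suitably conjugating away the non-resonant parts of $\varphi$, $\eta=\varphi^2$ and $\psi$. The functions $g_\varphi$, $g_\eta$, $g_\psi$ constructed in Section~4 via \eqref{4.gphi}, \eqref{4.getapsi} are exactly the coboundary transfer functions: replacing the fibre coordinates by their translates by $g_\varphi(t)$, $g_\eta(t)$, $g_\psi(t)$ conjugates $T$ to a skew product driven only by the resonant pieces $\varphi_1$, $\eta_1$, $\psi_1$, whose Birkhoff sums along the orbit of length $q_k$ are controlled by Lemma~\ref{4*.LEMBDPHI}. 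This conjugation is a homeomorphism of $\TNil$ (the $g$'s are continuous), so it is in particular a measurable isomorphism preserving $\rho$, and it does not change the measure complexity.

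Second, I would use the explicit orbit formula \eqref{Tn:t0}--\eqref{DEF/gn} for $T^n$, where the fibre displacement after $n$ steps is governed by $S_1(n;t_0)$, $S_2(n;t_0)$, $S_3(n;t_0)$. After the conjugation the relevant quantities are $\Phi_n(t)$, $H_n(t)$, $\Psi_n(t)$ from \eqref{4.PHIEPSIdef}, together with the quadratic term $\frac12\Phi_n^2 - \frac12 H_n + \Psi_n$ coming from the Heisenberg group law. Using Lemma~\ref{4*.CF} to choose, for each scale $N$, an index $k$ with $q_k \leq N < q_{k+1}$ and splitting $n \leq N$ into residue classes mod $q_k$, the resonant Birkhoff sums are within $O(q_k^{-B+1})$ of the linear trend $n\widehat{\varphi}(0)$, $n\widehat{\eta}(0)$, $n\widehat{\psi}(0)$ on each block of length $q_k$. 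This means that along orbit segments of length up to $q_k$, the dynamics is, up to error controlled by $B$, a genuine nilsystem translation (linear plus quadratic in $n$), so the orbit segment lives in an $\overline{d}_n$-ball of small radius around finitely many model points; the number of such model points needed to cover a $(1-\varepsilon)$-mass set grows like a fixed power of $q_{k+1}/q_k$ at worst, which one then compares against $N^{\tau}$.

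Third, I would distinguish the two regimes built into the definitions in Section~4: when $q_k \in \mathcal{Q}^{\flat}(B)$ (so $q_{k+1} \leq q_k^B$), the "denominator gap" $q_{k+1}/q_k$ is polynomially bounded in $q_k \leq N$, hence contributes at most a power $N^{O_B(1)}$ that, after letting $B \to \infty$ with $\tau$, is absorbed; when $q_k \in \mathcal{Q}^{\sharp}(B)$, Lemma~\ref{4*.LEMBDPHI} gives the sharp approximation of the resonant sums by their linear trends on the whole block of length $q_k$, so over that block the system behaves like an abelian (or degree-two nil-) rotation and $s_n \ll_\varepsilon 1$ on that scale. Interpolating $N$ between consecutive $q_k$'s and taking $B = B(\tau)$ large enough, one gets $\liminf_{n} s_n(\TNil, T, d, \rho, \varepsilon)/n^{\tau} = 0$ for every $\varepsilon > 0$ and every $\tau > 0$, uniformly over $\rho \in M(\TNil, T)$ because all the estimates above are orbit-wise and hence hold for any invariant measure.

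The main obstacle I expect is the quadratic term $\frac12 \Phi_n^2 - \frac12 H_n + \Psi_n$ in the $z$-fibre: it is not linear in $n$, so even after removing non-resonant parts the $\overline{d}_n$-geometry on the $z$-coordinate is that of a degree-two polynomial orbit rather than a rotation, and one must show that such quadratic orbit segments of length $n$ are still coverable by sub-polynomially many $\overline{d}_n$-balls. This is precisely the point where the hypothesis $\varphi_1 = \varphi_2$ in \eqref{1.TForm} is used — it forces the cross term in the group law to reassemble into $\tfrac12\Phi_n^2$ plus lower-order corrections that are themselves controlled by Lemma~\ref{4*.LEMBDPHI}, rather than an unrelated bilinear expression in two independent Birkhoff sums (this is the issue flagged in the footnote to \eqref{6/SimPi} and the reason the method does not apply to the general $S$ of \eqref{1/Gen/S}). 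Handling this quadratic term — estimating $\overline{d}_n$-distances between orbit points whose $z$-coordinates differ by a quadratic-in-$n$ polynomial with slowly varying coefficients, and counting the covering number — will be the technical heart of Section~6.
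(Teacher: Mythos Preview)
Your broad outline---conjugate away the non-resonant parts via $g_\varphi, g_\eta, g_\psi$ and then control the resonant Birkhoff sums $\Phi_n, H_n, \Psi_n$ using Lemma~\ref{4*.LEMBDPHI}---matches the paper, and you are right that the special form \eqref{1.TForm} is what makes the $z$-coordinate collapse to $\tfrac12\Phi_n^2-\tfrac12 H_n+\Psi_n$. (The conjugated system stays on $\TNil$, not on $\mathbb{T}\times\mathbb{T}^3$; the fibre is never abelianised.)

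Your Step~3, however, contains a genuine gap. The paper's dichotomy is not ``at each scale $N$, decide whether the relevant $q_k$ lies in $\mathcal{Q}^{\flat}(B)$ or $\mathcal{Q}^{\sharp}(B)$''; it is the global alternative ``$\mathcal{Q}^{\sharp}(B)$ is infinite'' versus ``$\mathcal{Q}^{\sharp}(B)$ is finite''. In the infinite case one works \emph{only} along the subsequence $n_k=q_k^{B-1}$ with $q_k\in\mathcal{Q}^{\sharp}(B)$: an explicit net $F(k)\subset\TNil$ of size $\asymp q_k^{7}$ is built and every point is shown to stay $\overline{d}_{n_k}$-close to a net point; with $B=8\tau^{-1}+1$ this gives $s_{n_k}/n_k^{\tau}\ll q_k^{7}/q_k^{8}\to0$. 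Since measure complexity is a $\liminf$, nothing at all needs to be said about $\mathcal{Q}^{\flat}$ scales. In the finite case $M_1(B)$ is itself finite, so the cohomological equation is solvable over all nonzero frequencies and the conjugated system is a constant nil-translation, an isometry with $s_n\equiv s_1<\infty$. Your proposed handling of $\mathcal{Q}^{\flat}$ scales (``contributes $N^{O_B(1)}$, absorbed as $B\to\infty$'') goes the wrong direction---the implied exponent grows with $B$---and there is no bound on $s_n$ offered there. Relatedly, your claim $s_n\ll_\varepsilon 1$ at $\mathcal{Q}^{\sharp}$ scales is too strong; one only gets $s_{n_k}\ll q_k^{7}$, which is why the time scale must be pushed out to $n_k=q_k^{B-1}$.

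Finally, you mention the ``linear trend $n\widehat{\varphi}(0)$'' but never use that it vanishes. Hypothesis~\eqref{1.ASSMFORphi} is essential: it gives $|\Phi_m(t)|\ll q_k$ (rather than $\ll m\le q_k^{B-1}$) uniformly for $m\le n_k$, and this is exactly what keeps the conjugation factor $Y(m)^{-1}g^{-1}g^*Y(m)$ in the metric estimate under control. Without $\widehat{\varphi}(0)=0$ the $z$-coordinate error term $\Phi_m(t)\cdot(x-x^*-y+y^*)$ would blow up and the covering argument would fail.
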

Before proving Proposition \ref{6*.MT}, we need to choose a proper metric on $\TNil$.
The following facts can be found in Sections 2 and 5 in Green-Tao \cite{GT12a}, which we will directly state
without proof. A more detailed version is given in Appendix I.
The lower central series filtration $G_{\bullet}$ on $G$ is the sequence of closed connected subgroups
\begin{equation*}
G= G_1 \supseteq G_2 \supseteq G_3= \{\mathrm{id}_G\}
\end{equation*}
where
\begin{equation*}
G_2=[G_1,G]=\mat{0}{0}{\mathbb{R}}
\end{equation*}
and $\mathrm{id}_G$ is the identity element of $G$.
Let $\mathfrak{g}$ be the Lie algebra of $G$. Let
\begin{equation*}
X_1=\liemat{1}{0}{0}, \quad X_2= \liemat{0}{1}{0}, \quad X_3 = \liemat{0}{0}{1}.
\end{equation*}
Then $\mathcal{X}=\{X_1,X_2,X_3\}$ is a Mal'cev basis adapted to $G_{\bullet}$.
The corresponding Mal'cev coordinate map $\kappa: G \rightarrow \mathbb{R}^3$ is given by
\begin{equation}\label{5.MC}
\kappa \mat{x}{y}{z}= (x,y,z-xy).
\end{equation}
The metric $d_G$ on $G$ is defined to be the largest metric such that $d_G(g_1,g_2) \leq |\kappa(g_1 ^{-1} g_2)|$, where $|\cdot|$ is the $l^{\infty}$-norm on $\mathbb{R}^3$. This metric can be more explicitly expressed as
\begin{equation*}
d_G(g_1,g_2)=\inf\bigg\{\sum_{i=0}^{n-1}\min(|\kappa(h_{i-1}^{-1}h_i)|,
|\kappa(h_i ^{-1}h_{i-1})|) \colon h_0,\ldots,h_n \in G ; h_0=g_1, h_n = g_2 \bigg\},
\end{equation*}
from which we can see that $d_G$ is left-invariant.
By (\ref{5.MC}), we have
\begin{equation}\label{5.MCUB}
\left|\kappa\mat{x}{y}{z} \right| \leq |x|+|y|+|z|
\end{equation}
provided that $x,y \in [0,1)$. The above metric on $G$ descends to a metric on $\Nil$ given by
\begin{equation*}
d_{\Nil}(\Gamma g_1, \Gamma g_2) \coloneqq \inf\{d_{G}(g_1', g_2') \, \colon \, g_1', g_2' \in G, \Gamma g_1=\Gamma g_1', \Gamma g_2 = \Gamma g_2 '\}.
\end{equation*}
It can be proved that $d_{\Nil}$ is indeed a metric on $\Nil$. Since $d_{G}$ is left-invariant, we also have
\begin{equation}\label{6.dNil<dG}
d_{\Nil}(\Gamma g_1, \Gamma g_2)= \inf_{\gamma \in \Gamma} d_{G}(g_1,\gamma g_2).
\end{equation}
Finally, we take $d_{\mathbb{T}}$ to be the canonical Euclidean metric on $\mathbb{T}$, and $d=d_{\TNil}$ the $l^{\infty}$-product metric of $d_{\mathbb{T}}$ and $d_{\Nil}$ given by
\begin{equation}\label{6.DefMetric}
d((t_1,\Gamma g_1),(t_2,\Gamma g_2))= \max ( d_{\mathbb{T}}(t_1,t_2),  d_{\Nil}(\Gamma g_1,\Gamma g_2) ).
\end{equation}
In view of Lemma \ref{5*.2}, the choice of compatible metrics does not
affect the measure complexity. Thus the above choice of $d$ is admissible.

\begin{proof}[Proof of Proposition \ref{6*.MT}]
Fix $\tau> 0$. We want to show that, for any $\varepsilon > 0$,
\begin{equation*}
\liminf_{n \rightarrow \infty} \frac{s_n(\TNil,T,d,\rho,\varepsilon)}{n^{\tau}}=0.
\end{equation*}
Without loss of generality, we assume that both $\tau$ and $\varepsilon$ are less than $10^{-2}$,
and also both $\varepsilon^{-1}$ and $\tau^{-1}$ are integers.
Set $B=8\tau^{-1}+1$. Let $\mathcal{Q}^{\flat}(B), \mathcal{Q}^{\sharp}(B), M_1(B),
M_2(B), g_{\varphi}, g_{\eta}$ and $g_{\psi}$ be as in Section 4.

We first assume that $\mathcal{Q}^{\flat} (B)$ is infinite. Construct a transformation $S \colon \TNil \rightarrow \TNil$ as
\begin{equation}\label{6.DEFS}
S:
(t,\Gamma g)
\mapsto
\left (t, \Gamma g \bmat{g_{\varphi}(t)}{g_{\varphi}(t)}{\frac{1}{2}g^2_{\varphi}(t) - \frac{1}{2}g_{\eta}(t)+g_{\psi}(t)} \right).
\end{equation}
Write $T_1 = S^{-1} \circ T \circ S$. Then a straightforward calculation gives
\begin{eqnarray*}
&& T_1(t,\Gamma g)= S^{-1} \circ T \circ S (t, \Gamma g) \\
&& = S^{-1} \circ T \left( t,\Gamma g
    \bmat{g_{g_{\varphi}(t)}}{g_{\varphi}(t)}{\frac{1}{2}g^2_{\varphi}(t) - \frac{1}{2}g_{\eta}(t)+g_{\psi}(t)} \right) \\
&& = S^{-1}\left( t+\alpha, \Gamma g
    \bmat{\varphi(t)+g_{\varphi(t)}}{\varphi(t)+g_{\varphi(t)}}
    {\frac{1}{2}g^2_{\varphi}(t) - \frac{1}{2}g_{\eta}(t)+g_{\psi}(t) + \psi(t) + g_{\varphi}(t)\varphi(t)}\right) \\
&& =\left(t+\alpha, \Gamma g
\bmat{\varphi(t)+g_{\varphi}(t)-g_{\varphi}(t+\alpha)} {\varphi(t)+g_{\varphi}(t)-g_{\varphi}(t+\alpha)}
{\varpi}\right),
\end{eqnarray*}
where we have written temporarily
\begin{eqnarray*}
\varpi:=
\frac{1}{2}g^2_{\varphi}(t) - \frac{1}{2}g_{\eta}(t)+g_{\psi}(t) + \psi(t) + g_{\varphi}(t)\varphi(t)+\frac{1}{2}g^2_{\varphi}(t+\alpha) \\
 + \frac{1}{2}g_{\eta} (t+\alpha) -g_{\psi}(t+\alpha) - g_{\varphi}(t)g_{\varphi}(t+\alpha) - \varphi(t)g_{\varphi}(t+\alpha).
\end{eqnarray*}
Let $\varphi_1,\varphi_2,\eta_1,\eta_2,\psi_1,\psi_2$ be as in (\ref{4.phi12}) and (\ref{4.etapsi12}).
By (\ref{4.gphi}), (\ref{4.getapsi}), as well as $\varphi^2(t)=\eta(t)$, the above $\varpi$ can be simplified
\footnote{A careful reader will observe that the simplification of $\varpi$ in \eqref{6/SimPi} works
for $T$ satisfying \eqref{1.TForm}, but not
for general $S$ of the form \eqref{1/Gen/S}. This is the point where the exact form of \eqref{1.TForm} is indeed
needed.
}
as
\begin{eqnarray}\label{6/SimPi}
\varpi
&=& \frac{1}{2}(g_{\varphi}(t+\alpha)-g_{\varphi}(t))^2 - \varphi(t)(g_{\varphi}(t+\alpha)-g_{\varphi}(t))
           + \frac{1}{2}(g_{\eta}(t+\alpha)-g_{\eta}(t)) + \psi_1(t) \nonumber\\
&=& \frac{1}{2}\varphi^2 _2(t)- \varphi(t)\varphi_2(t)+\frac{1}{2}\eta_2(t)+\psi_1(t) \nonumber\\
&=& \frac{1}{2}(\varphi(t)-\varphi_2(t))^2 - \frac{1}{2}\varphi^2(t)+\frac{1}{2}\eta(t)-\frac{1}{2}\eta_1(t)+\psi_1(t) \nonumber\\
&=& \frac{1}{2}\varphi_1 ^2 (t) -\frac{1}{2}\eta_1(t) + \psi_1(t).
\end{eqnarray}
It follows that
\begin{equation}\label{6.T1}
T_1: (t,\Gamma g)
\mapsto
\left(t+\alpha, \Gamma g \bmat{\varphi_1(t)}{\varphi_1(t)}{\frac{1}{2}\varphi_1 ^2 (t) -\frac{1}{2}\eta_1(t) + \psi_1(t)}\right),
\end{equation}
and by induction on $n$,
\begin{equation*}
T^n_1: (t,\Gamma g)
\mapsto
\left(t+n\alpha , \Gamma g
\bmat{\Phi_n(t)}{\Phi_n(t)}{\frac{1}{2}\Phi_n ^2 (t) - \frac{1}{2}H_n(t) + \Psi_n(t)}\right),
\end{equation*}
where $\Phi_n(t)$, $H_n(t)$ and $\Psi_n(t)$ are as in (\ref{4.PHIEPSIdef}). Clearly, $S$ is a homeomorphism on $\TNil$.
Hence by Lemma \ref{5*.2} we need only to show that the measure complexity of $(\TNil,T_1,\nu)$ is
weaker than $n^{\tau}$, where $\nu = \rho \circ S$.

Let $C_1=C_1(B)>0$ be the constant in Lemma \ref{4*.LEMBDPHI}. The functions $\varphi_1(t)$, $\eta_1(t)$
and $\psi_1(t)$ are Lipschitz continuous, and therefore there exists $L>0$ such that
\begin{equation}\label{6.Lip}
\begin{cases}
|\varphi_1(t_1)-\varphi_1(t_2)|\leq L \|t_1-t_2\|,  \\
|\eta_1(t_1)-\eta_1(t_2)|\leq L \|t_1-t_2\|,  \\
|\psi_1(t_1)-\psi_1(t_2)| \leq L \|t_1-t_2\|
\end{cases}
\end{equation}
for any $t_1$, $t_2 \in \mathbb{T}$. We also assume that $L$ is large enough such that $L>\varepsilon^{-1}$.
Moreover, since $\varphi_1(t)$, $\eta_1(t)$ and $\psi_1(t)$ are continuous, there exists a constant $C_2>0$ such that
\begin{equation*}
|\varphi_1(t)|\leq C_2, \ |\eta_1(t)|\leq C_2, \ |\psi_1(t)|\leq C_2
\end{equation*}
for all $t \in \mathbb{T}$. Since $q_k \rightarrow \infty$ as $k \rightarrow \infty$, there exists $K_0>0$ such that
$(C_1+C_2)/q_k < \varepsilon$ for all $k \geq K_0$. For $k \geq K_0$, define
\begin{equation*}
F_1(k)=\left\{t=\frac{j\varepsilon}{Lq_k } \in \mathbb{T} \, \colon \, j=0,1, \ldots, \frac{Lq_k}{\varepsilon}-1 \right\}
\end{equation*}
and
\begin{equation*}
F_2(k)=\left\{ \Gamma g = \mat{j_1(q_k ^2 L)^{-1}}{j_2(q_k ^2 L)^{-1}}{j_3(q_k ^2 L)^{-1}}\in \Nil \, \colon \, j_1, \, j_2,\, j_3 = 0,1, \ldots , q_k ^2 L-1\right\}.
\end{equation*}
Let
\begin{equation*}
F(k)=\{(t,\Gamma g) \in \TNil \, \colon \, t \in F_1(k), \ \Gamma g \in F_2(k)\}.
\end{equation*}
Then $\# F(k) = \varepsilon^{-1}L^4q_k^7$.

Now assume that $q_k \in \mathcal{Q}^{\sharp}(B)$ with $k \geq K_0$, and set
\begin{equation}\label{Def/nk}
n_k=q_k ^{B-1}.
\end{equation}
Then any positive integer $m\leq n_k$ can be uniquely written as
\begin{equation}\label{6.i}
m=a_mq_k+b_m
\end{equation}
with $0 \leq b_m < q_k$ and $a_m \leq q_k ^{B-2}$. By the definition of $F(k)$, for any
$$
(t,\Gamma g)=\left(t,\Gamma \mat{x}{y}{z}\right) \in \TNil
$$
with $x$, $y$, $z \in [0,1)$, there exists
$$
(t^*,\Gamma g^*)=\left(t^*, \Gamma \mat{x^*}{y^*}{z^*} \right) \in F(k)
$$
such that $\|t-t^*\| \leq \varepsilon/(Lq_k)$
and
\begin{equation}\label{6.maxXYZ}
\max \{ \, |x-x^*|, \ |y-y^*|, \ |z-z^*| \, \} \leq \frac{1}{q_k ^2L}.
\end{equation}
We want to show that $d(T_1^m(t,\Gamma g), T_1 ^m(t^*,\Gamma g^*))$ is small for any $m \leq n_k$
where $n_k$ is as in \eqref{Def/nk}.

Let
\begin{equation*}
Y(m)=\bmat{\Phi_m(t)}{\Phi_m(t)}{\frac{1}{2}\Phi_m^2(t) - \frac{1}{2}H_m (t) + \Psi_m(t)}
\end{equation*}
and
\begin{equation*}
Y^*(m)=\bmat{\Phi_m(t^*)}{\Phi_m(t^*)}{\frac{1}{2}\Phi_m^2(t^*) - \frac{1}{2}H_m (t^*) + \Psi_m(t^*)}.
\end{equation*}
Then we have
\begin{equation*}
T^m(t,\Gamma g)=(t+\alpha m, \Gamma gY(m)), \quad  T^m(t^*,\Gamma g^*)=(t^*+\alpha m, \Gamma g^*Y^*(m)).
\end{equation*}
Therefore, by our choice of the metric on $\TNil$, we have
\begin{equation}\label{6.d<dT+dNil}
d(T^m(t,\Gamma g),T^m(t^*,\Gamma g^*)) \leq \max (\|t-t^*\|, d_{\Nil}(\Gamma gY(m),\Gamma g^*Y^*(m)) ).
\end{equation}

The term $\|t-t^*\|$ can be arbitrarily small as $q_k\to\infty$. So it remains to bound the last term in \eqref{6.d<dT+dNil}.
By the triangle inequality and (\ref{6.dNil<dG}),
\begin{eqnarray}\label{6.dNil<dG+}
&& d_{\Nil}(\Gamma gY(m),\Gamma g^*Y^*(m)) \nonumber\\
&& \leq
d_{\Nil}(\Gamma g^*Y(m),\Gamma gY(m))+ d_{\Nil}(\Gamma g^* Y^*(m),\Gamma g^* Y(m)) \nonumber\\
&& \leq
d_G(g^*Y(m),gY(m))+d_G(g^* Y^*(m),g^* Y(m)) \nonumber\\
&& =
d_G(g^*Y(m),gY(m))+d_G(Y^*(m),Y(m)),
\end{eqnarray}
where the last equality follows from the left invariance of $d_G$. Furthermore, by the definition of $d_G$, we have
\begin{eqnarray}\label{6.dG<kappa}
\begin{cases}
d_G(g^*Y(m),gY(m)) \leq |\kappa(Y(m)^{-1}g^{-1}g^*Y(m))|, \\
d_G(Y^*(m),Y(m)) \leq |\kappa(Y(m)^{-1}Y^*(m))|,
\end{cases}
\end{eqnarray}
where $\kappa$ is the Mal'cev coordinate map defined by (\ref{5.MC}) and $|\cdot|$ is the $l^{\infty}$-norm on $\mathbb{R}^3$.

A straightforward calculation gives
\begin{eqnarray*}
&& Y(m)^{-1}g^{-1}g^*Y(m) \\
&& = \bmat{-\Phi_m(t)}{-\Phi_m(t)}{\frac{1}{2}\Phi^2_m(t) + \frac{1}{2}H_m(t)-\Psi_m(t)}
  \bmat{x^*-x}{y^*-y}{z^* - z +xy - xy^*} \\
&& \quad \times
\bmat{\Phi_m(t)}{\Phi_m(t)}{\frac{1}{2}\Phi^2_m(t) - \frac{1}{2}H_m(t)+\Psi_m(t)} \\
&& =\bmat{x^*-x}{y^* - y}{(z^*-z)+x(y-y^*)-\Phi_m(t)(x-x^*-y+y^*)}.
\end{eqnarray*}
Since $x,y\in [0,1)$, by (\ref{5.MCUB}), we have
\begin{equation*}
|\kappa(Y(m)^{-1}g^{-1}g^*Y(m))| \leq (\Phi_m(t)+2)(|x-x^*|+|y-y^*|+|z-z^*|).
\end{equation*}
By Lemma \ref{4*.LEMBDPHI}, we have
\begin{equation*}
|\Phi_{q_k}(t)-q_k \widehat{\varphi}(0)| \leq C_1 q_k ^{-B+1}.
\end{equation*}
However, by the assumption (\ref{1.ASSMFORphi}), the Fourier coefficient $\widehat{\varphi}(0)$ is zero, and therefore
$$
|\Phi_{q_k}(t)| \leq C_1 q_k ^{-B+1}.
$$
Hence by the definition of $\Phi_n(t)$ and (\ref{6.i}), we obtain
\begin{eqnarray*}
|\Phi_m(t)|
&\leq& \sum_{r=0}^{a_m-1}|\Phi_{q_k}(t+rq_k\alpha)|+\sum_{l=0}^{b_m}|\varphi_1(t+(rq_k+l)\alpha)| \\
&\leq& \frac{C_1a_m}{q_k ^{B-1}}+C_2 q_k \leq \frac{C_1}{q_k}+C_2q_k.
\end{eqnarray*}
Thus by (\ref{6.maxXYZ}), (\ref{6.dNil<dG+}) and (\ref{6.dG<kappa}), we obtain
\begin{eqnarray}\label{6.d_G1BD}
d_G(g^*Y(m), gY(m))
&\leq& (\Phi_m(t)+2)(|x-x^*|+|y-y^*|+|z-z^*|) \nonumber\\
&\leq& \frac{3(3+C_2 q_k)}{Lq_k ^2} \leq \frac{9}{L}+\frac{3C_2}{q_k} < 12 \varepsilon.
\end{eqnarray}
The treatment of $d_G(Y^*(m),Y(m))$ is similar. We calculate that
\begin{eqnarray*}
Y(m)^{-1}Y^*(m)
&=& \bmat{-\Phi_m(t)}{-\Phi_m(t)}{\frac{1}{2}\Phi^2_m(t)+\frac{1}{2}H_m(t) - \Psi_{m}(t)}\\
&& \times \bmat{\Phi_m(t^*)}{\Phi_m(t^*)}{\frac{1}{2}\Phi^2_m(t^*)-\frac{1}{2}H_m(t^*) + \Psi_{m}(t^*)} \\
&=& \bmat{\Phi_m(t^*)-\Phi_m(t)}{\Phi_m(t^*)-\Phi_m(t)}
       {\varpi},
\end{eqnarray*}
where we have written temporarily
$$
\varpi :=\frac{1}{2}(\Phi_m(t^*)-\Phi_m(t))^2 + \frac{1}{2}(H_m(t)-H_m(t^*))-(\Psi_{m}(t^*)-\Psi_{m}(t)).
$$
By Lemma \ref{4*.LEMBDPHI}, (\ref{6.Lip}) and (\ref{6.i}), we have
\begin{eqnarray*}
|\Phi_m(t^*)-\Phi_m(t)|
&\leq &\sum_{r=0}^{a_m-1}(| \Phi_{q_k}(t^*+rq_k \alpha) - q_k \widehat{\varphi}(0)|+|\Phi_{q_k}(t+rq_k \alpha) - q_k \widehat{\varphi}(0)|)\\
     && +\sum_{l=0}^{b_m}|\varphi_1(t^* + (a_m q_k + l)\alpha) - \varphi_1(t + (a_m q_k + l)\alpha)| \\
&\leq& \frac{C_1}{q_k}+q_kL \|t-t^{*}\| < 2\varepsilon.
\end{eqnarray*}
The same estimate holds for $|H_m(t^*)-H_m(t)| $ and $|\Psi_m(t^*)-\Psi_m(t)|$ as well.
Now since $|\Phi_m(t^*)-\Phi_m(t)|<1$, we can apply (\ref{5.MCUB}) to $\kappa(Y(m)^{-1}Y^*(m))$, getting
\begin{equation}\label{6.d_G2BD}
d_G(Y^*(m),Y(m)) \leq |\kappa(Y(m)^{-1}Y^*(m))| < 8 \varepsilon.
\end{equation}

From (\ref{6.d<dT+dNil}), (\ref{6.dNil<dG+}), (\ref{6.d_G1BD}) and (\ref{6.d_G2BD}), we conclude that
\begin{equation*}
d(T^m(t,\Gamma g),T^m(t^*,\Gamma g^*)) < 20 \varepsilon
\end{equation*}
for all $m \leq n_k$. Here, and in what follows, $n_k$ is as in \eqref{Def/nk}.
Thus, by \eqref{Def/bdnx},
\begin{equation*}
\overline{d}_{n_k} (T^m(t,\Gamma g),T^m(t^*,\Gamma g^*)) = \frac{1}{n_k} \sum_{m=0}^{n_k-1} d(T^m(t,\Gamma g),T^m(t^*,\Gamma g^*)) < 20 \varepsilon.
\end{equation*}
This means that $\TNil$ can be covered by $\#F(k) = \varepsilon^{-1}L^4q_k^7$ balls of radius $20\varepsilon$ under the metric $\overline{d}_{n_k}$ since $(t,\Gamma g)$ can be chosen arbitrarily. It follows that
\begin{equation*}
s_{n_k}(\TNil,T_1,d,\nu,20\varepsilon) \leq \varepsilon^{-1}L^4q_k^7.
\end{equation*}
Since $\mathcal{Q}^{\sharp}(B)$ is infinite, we can let $q_k$ tend to infinity along $\mathcal{Q}^{\sharp}(B)$, getting
\begin{eqnarray*}
&&\liminf_{n \rightarrow \infty} \frac{s_n(\TNil,T_1,d,\nu,20\varepsilon)}{n^{\tau}}
\leq
\liminf_{{k \rightarrow \infty} \atop {q_k \in \mathcal{Q}^{\sharp}(B), k \geq K_0}} \frac{s_{n_k}(\TNil,T_1,d,\nu,20\varepsilon)}{n_k^{\tau}}\\
&&\leq \liminf_{{k \rightarrow \infty} \atop {q_k \in \mathcal{Q}^{\sharp}(B), k \geq K_0}} \frac{\varepsilon^{-1}L^4q_k^7}{q_k ^{8+\tau}}
\leq
\liminf_{{k \rightarrow \infty} \atop {q_k \in \mathcal{Q}^{\sharp}(B), k \geq K_0}} \frac{\varepsilon^{-1}L^4}{q_k} = 0.
\end{eqnarray*}
Since $\varepsilon$ can be arbitrarily small, this means that the measure complexity of $(\TNil, T, \rho)$ is weaker that $n^{\tau}$ when $\mathcal{Q}^{\sharp}(B)$ is infinite.

Finally, we deal with the case that $\mathcal{Q}^{\sharp}(B)$ is finite. Now $M_1(B)$ is also finite. Hence the conclusion of Lemma \ref{4*.M2CONV} still holds if we replace $M_2(B)$ by $\mathbb{Z} \backslash \{0\}$. Hence the functions $\widetilde{g}_{\varphi}(t)$, $\widetilde{g}_{\eta}(t)$ and $\widetilde{g}_{\psi}(t)$ defined by
\begin{equation*}
\widetilde{g}_{\varphi}(t) = \sum_{m \neq 0} \widehat{\varphi}(m)\frac{e(mt)}{e(m\alpha)-1},
\,
\widetilde{g}_{\eta}(t) = \sum_{m \neq 0} \widehat{\eta}(m)\frac{e(mt)}{e(m\alpha)-1},
\,
\widetilde{g}_{\psi}(t) = \sum_{m \neq 0} \widehat{\psi}(m)\frac{e(mt)}{e(m\alpha)-1}
\end{equation*}
are continuous and periodic with period one. Thus we can write
\begin{equation}
\begin{cases}
\varphi(t)=\widetilde{g}_{\varphi}(t+\alpha)-\widetilde{g}_{\varphi}(t), \\
\eta(t)=\widehat{\eta}(0)+\widetilde{g}_{\eta}(t+\alpha)-\widetilde{g}_{\eta}(t), \\
\psi(t)=\widehat{\psi}(0)+\widetilde{g}_{\psi}(t+\alpha)-\widetilde{g}_{\psi}(t).
\end{cases}
\end{equation}
Notice that $\widehat{\varphi}(0)=0$, and so there are no constant terms in the first equation. Similarly to (\ref{6.DEFS}), we define $\widetilde{S} \colon \TNil \rightarrow \TNil$ by
\begin{equation*}
\widetilde{S}: (t,\Gamma g) \mapsto
\left (t, \Gamma g
    \bmat{\widetilde{g}_{\varphi}(t)}{\widetilde{g}_{\varphi}(t)}{\frac{1}{2}\widetilde{g}^2_{\varphi}(t) - \frac{1}{2}\widetilde{g}_{\eta}(t)+\widetilde{g}_{\psi}(t)} \right).
\end{equation*}
Then $\widetilde{T}_1 \coloneqq \widetilde{S}^{-1} \circ T \circ \widetilde{S}$ is given by
\begin{equation}
\widetilde{T}_1: (t,\Gamma g)
\mapsto
\left(t+\alpha, \Gamma g \bmat{0}{0}{-\frac{1}{2}\widehat{\eta}(0) + \widehat{\psi}(0)}\right)
\end{equation}
as in (\ref{6.T1}). Again by Lemma \ref{5*.2}, the measure complexity of $(\TNil, T, \rho)$ is weaker that $n^{\tau}$ if and only if the measure complexity of $(\TNil, \widetilde{T}_1, \nu)$ is weaker that $n^{\tau}$, where $\nu=\rho \circ S$. However, $d$ is invariant under $\widetilde{T}_1$. So we have for any $n \geq 1$ and any $\varepsilon>0$ that
\begin{equation*}
s_n(\TNil, \widetilde{T}_1, d, \nu, \varepsilon)= s_1(\TNil, \widetilde{T}_1, d, \nu, \varepsilon).
\end{equation*}
Since $\TNil$ is compact, we have $s_1(\TNil, \widetilde{T}_1, d, \nu, \varepsilon) < \infty$
and consequently
\begin{equation*}
\lim_{n \rightarrow \infty} \frac{s_n(\TNil, \widetilde{T}_1, d, \nu, \varepsilon)}{n^{\tau}}
=
\lim_{n \rightarrow \infty} \frac{s_1(\TNil, \widetilde{T}_1, d, \nu, \varepsilon)}{n^{\tau}}
=0.
\end{equation*}
Hence the measure complexity of $(\TNil, T, \rho)$ is also weaker than $n^{\tau}$ if $\mathcal{Q}^{\sharp}(B)$ is finite. The proof is complete.
\end{proof}
\begin{proof}[Proof of Theorem \ref{1*.MT}]
Theorem \ref{1*.MT} follows from Proposition \ref{3*.MT}, Lemma \ref{5*.HWYTM} and Proposition \ref{6*.MT}.
\end{proof}

\section{Appendix I: preliminaries on nilmanifolds and the Mal'cev basis}

\Def{Nilmanifold}{
Let $G$ be a connected, simply connected Lie group. The identity element of $G$ is denoted by $\mathrm{id}_G$.  A \emph{filtration} $G_{\bullet}$ on $G$ is a sequence of closed connected subgroups
\begin{equation*}
G=G_0=G_1 \supseteq G_2 \supseteq \cdots \supseteq G_d \supseteq G_{d+1}=\{\mathrm{id}_G\}
\end{equation*}
satisfying $[G_i,G_j] \subset G_{i+j}$ for all integers $i,j \geq 0$. The \emph{degree} of $G_{\bullet}$ is the least integer $d$ for which $G_{d+1}=\{\mathrm{id}_G\}$ where $[H,K]$ is the commutator group of $H$ and $K$. If $G$ possesses a filtration, we say that $G$ is \emph{nilpotent}. Let $\Gamma$ be a discrete cocompact subgroup of $G$. Then the quotient space $\Nil$
 is called a \emph{nilmanifold}. The \emph{dimension} of $\Nil$ is defined to be the dimension of $G$. }

We remark that, in the literature, left coset form of the nilmanifold $G / \Gamma$ is more commonly used; see for example
 \cite{GT12a}. We use the right coset form here in order to directly apply the results in \cite{Tol77}.
 Certainly, there is no essential difference between these two forms. But one should carefully
 modify the definition of the metric on $\Nil$ defined below.

\Def{Lower central series filtration}{
Let $G$ be a nilpotent Lie group possessing a filtration of degree $d$. Then the sequence $\{G_i\}$ defined by $G=G_0=G_1$ and $G_{i+1}=[G_i,G]$ terminates with $G_{s+1}=\{\mathrm{id}_G\}$ for some integer $s \leq d$. This sequence is called the \emph{lower central series filtration} of $G$ and the least integer $s$ is called the \emph{step} of $G$.
}

For a Lie group $G$ with Lie algebra $\mathfrak{g}$, one can define the exponential map
$\exp \colon \mathfrak{g} \rightarrow G$ and the logarithm map $\log \colon G \rightarrow \mathfrak{g}$.
When $G$ is a connected, simply connected nilpotent group, both these two maps are diffeomorphisms.

\Def{Mal'cev basis}{
Let $G$ be a $m$-dimensional $s$-step nilpotent Lie group with the lower central series filtration $G_{\bullet}$. Let $\Gamma$ be a discrete cocompact subgroup. A basis $\mathcal{X}=\{X_1,\ldots,X_m\} \subset \mathfrak{g}$ is called a \emph{Mal'cev basis} for $\Nil$ adapted to $G_{\bullet}$ if it satisfies the following conditions: \\
\indent \text{\rm (i)} For each $j=0, \ldots , m-1$, the subspace $\mathfrak{h}_j \coloneqq \Span(X_{j+1}, \ldots , X_m)$ is a Lie algebra ideal of $\mathfrak{g}$. Therefore, the group $H_j \coloneqq \exp \mathfrak{h}_j$ is a normal subgroup of $G$; \\
\indent \text{\rm (ii)}  For every $0 \leq i \leq s$, $G_i = H_{m-m_i}$ where $m_i$ is the dimension of $G_i$; \\
\indent \text{\rm (iii)}  Each $g \in G$ can be uniquely written as $\exp(t_1X_1)\exp(t_2X_2)\ldots \exp(t_mX_m)$ for some $t_1, \ldots , t_m \in \mathbb{R}$; \\
\indent \text{\rm (iv)} The discrete cocomapct subgroup $\Gamma$ is given by
\begin{equation*}
\Gamma = \{g=\exp(t_1X_1)\exp(t_2X_2)\ldots \exp(t_mX_m)\in G \, \colon \, t_1, \ldots , t_m \in \mathbb{Z}\}.
\end{equation*}
}

By the result of Mal'cev \cite{Mal49}, any nilmanifold $\Nil$ can be equipped with a Mal'cev basis adapted to the lower central series filtration. By (iii) of the above definition, given a Mal'cev basis $\mathcal{X}=\{X_1, \ldots, X_m\}$, each $g \in G$ can be uniquely expressed as
\begin{equation*}
g = \exp(t_1X_1)\exp(t_2X_2)\ldots \exp(t_mX_m).
\end{equation*}
The vector $(t_1,t_2, \ldots, t_m)$ is called the \emph{Mal'cev coordinate} of $g$ and the bijection $\kappa \colon G \rightarrow \mathbb{R}^m$ given as
\begin{equation*}
\kappa (g)= (t_1,t_2, \ldots, t_m)
\end{equation*}
is called the \emph{Mal'cev coordinate map}. Hence $\Gamma = \kappa^{-1}(\mathbb{Z}^m)$.

The Mal'cev basis can be used to define the metric on nilmanifolds. Let $\Nil$ be a $m$-dimensional nilmanifold with a Mal'cev basis $\mathcal{X}$. The corresponding Mal'cev coordinate map is denoted by $\kappa$. Then the metric on $G$ is defined to be the largest metric $d_G$ such that $d_G(g_1,g_2) \leq |\kappa(g_1 ^{-1}g_2)|$, where $|\cdot|$ denotes the $l^{\infty}$-norm on $\mathbb{R}^m$. This metric can be more explicitly expressed as
\begin{equation*}
d_G(g_1,g_2)=\inf\bigg\{\sum_{i=0}^{n-1}\min(|\kappa(h_{i-1}^{-1}h_i)|,
|\kappa(h_i ^{-1}h_{i-1})|) \colon h_0,\ldots,h_n \in G ; h_0=g_1, h_n = g_2 \bigg\},
\end{equation*}
from which we see that $d_G$ is left-invariant. The above metric on $G$ descends to a metric on $\Nil$ given by
\begin{equation*}
d_{\Nil}(\Gamma g_1, \Gamma g_2) \coloneqq \inf\{d_{G}(g_1'.g_2') \, \colon \, g_1', g_2' \in G, \Gamma g_1=\Gamma g_1', \Gamma g_2 = \Gamma g_2 '\}.
\end{equation*}
It can be proved that $d_{\Nil}$ is indeed a metric on $\Nil$. Since $d_{G}$ is left-invariant, we also have
\begin{equation*}
d_{\Nil}(\Gamma g_1, \Gamma g_2)= \inf_{\gamma \in \Gamma} d_{G}(g_1,\gamma g_2).
\end{equation*}

\Def{Rationality of a Mal'cev basis}{
Let $\Nil$ be a $m$-dimensional nilmanifold and let $Q>0$. A Mal'cev basis $\mathcal{X}=\{X_1,X_2, \ldots ,X_m\}$ for $\Nil$ is called $Q$-\emph{rational} if all of the coefficients $c_{ijk}$ in the relations
\begin{equation*}
[X_i,X_j]=\sum_{k=1}^m c_{ijk}X_k
\end{equation*}
are rational with height $\leq Q$. Here for a rational number $x=a/b$, its \emph{height} is defined to be $\max(|a|,|b|)$.
}
The following lemma is a weak version of \cite[Lemma A.4]{GT12a}.
\begin{LEM}\label{Q-equivalent}
Let $Q \geq 2$ and let $\mathcal{X}$ be a $Q$-rational Mal'cev basis for $\mathfrak{g}$ with the coordinate map $\kappa$. Then for all $g,h \in G$ with $d_G(g,\mathrm{id}_G),d_G(h,\mathrm{id}_G) \leq Q$, we have the bound
\begin{equation}\label{Bound}
|\kappa(g)-\kappa(h)| \leq Q^{O(1)} d_G(g,h),
\end{equation}
where $\mathrm{id}_G$ stands for the identity element of $G$.
\end{LEM}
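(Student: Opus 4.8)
The plan is to adapt the argument of Green--Tao \cite[Appendix A]{GT12a}, keeping track only of the polynomial-in-$Q$ dependence we need. Since $d_G$ is left-invariant, $d_G(g,h)=d_G(\mathrm{id}_G,g^{-1}h)$, and the hypothesis places $g,h$ in the $d_G$-ball of radius $Q$ about $\mathrm{id}_G$, so everything happens in a $Q^{O(1)}$-neighbourhood of $\mathrm{id}_G$ in coordinates. The first ingredient is algebraic: writing $m=\dim G$ and letting $s$ be the step of $G$, the coordinate maps
\begin{equation*}
\Phi(x,y):=\kappa\big(\kappa^{-1}(x)\,\kappa^{-1}(y)\big),\qquad \iota(x):=\kappa\big(\kappa^{-1}(x)^{-1}\big),
\end{equation*}
and the change of variables between $\kappa$ (exponential coordinates of the second kind) and the first-kind coordinates $g\mapsto\log g$, are polynomial maps of degree $O_m(1)$ whose coefficients are rationals of height $Q^{O(1)}$; this follows from the Baker--Campbell--Hausdorff formula together with the $Q$-rationality of the structure constants $c_{ijk}$. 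Because $\Phi(x,0)=x$, $\iota(0)=0$ and $\log\mathrm{id}_G=0$, it follows that on any ball $\{|z|\le T\}$ with $T\ge1$ one has the Lipschitz-type bounds $|\Phi(x,y)-x|\le(QT)^{O(1)}|y|$, $|\iota(y)|\le(QT)^{O(1)}|y|$, $|\log g|\le(QT)^{O(1)}|\kappa(g)|$ and $|\kappa(g)|\le(QT)^{O(1)}|\log g|$ for arguments in that ball, since a polynomial of degree $O_m(1)$ with coefficients of height $Q^{O(1)}$ has gradient of size $(QT)^{O(1)}$ there.

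The second ingredient, which I expect to be the main obstacle, is the a priori coordinate bound: \emph{if $d_G(g,\mathrm{id}_G)\le R$ with $R\ge1$ then $|\kappa(g)|\le(QR)^{O(1)}$.} To prove it, fix $\varepsilon>0$ and a chain $\mathrm{id}_G=h_0,\dots,h_n=g$ with $\sum_{i=1}^n\min\big(|\kappa(w_i)|,|\kappa(w_i^{-1})|\big)\le R+\varepsilon$, where $w_i:=h_{i-1}^{-1}h_i$, so that $g=w_1w_2\cdots w_n$. Each summand is $\le R+\varepsilon\le2R$; since $\log(w_i^{-1})=-\log w_i$, applying the bound $|\log u|\le(QR)^{O(1)}|\kappa(u)|$ (valid on $\{|z|\le2R\}$) to whichever of $u=w_i,\,w_i^{-1}$ realizes the minimum gives $|\log w_i|\le(QR)^{O(1)}\min\big(|\kappa(w_i)|,|\kappa(w_i^{-1})|\big)$, hence $\sum_i|\log w_i|\le(QR)^{O(1)}$. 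Now expand $\log g=\log\!\big(\exp(\log w_1)\cdots\exp(\log w_n)\big)$ by the iterated Baker--Campbell--Hausdorff formula: iterated brackets of length exceeding $s$ vanish, so $\log g$ is a finite sum of iterated brackets in the $\log w_i$ of length $k\le s$, with at most $4^k$ bracketings of each length-$k$ word, BCH coefficients of size $O_s(1)$, and each iterated bracket of the basis vectors of norm $\le(mQ)^{O(k)}$ by $Q$-rationality. Grouping by $k$ and summing a $k$-fold product over all ordered index $k$-tuples reconstitutes $\big(\sum_i|\log w_i|\big)^k$, so $|\log g|\le\sum_{k=1}^s C_{Q,s}^{\,k}\big(\sum_i|\log w_i|\big)^k\le(QR)^{O(1)}$, whence $|\kappa(g)|\le(QR)^{O(1)}$ on letting $\varepsilon\to0$. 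The crucial point is that the terms are organized by bracket length $\le s$, not by the (unbounded) chain length $n$.

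Now conclude. Let $g,h$ have $d_G(g,\mathrm{id}_G),d_G(h,\mathrm{id}_G)\le Q$, so $d_G(g,h)\le2Q$; fix $\varepsilon>0$ and a chain $g=h_0,\dots,h_n=h$ with $\sum_{i=1}^n\min\big(|\kappa(w_i)|,|\kappa(w_i^{-1})|\big)\le d_G(g,h)+\varepsilon$, $w_i:=h_{i-1}^{-1}h_i$. Using $d_G(a,b)\le\min\big(|\kappa(a^{-1}b)|,|\kappa(b^{-1}a)|\big)$ and the triangle inequality, every partial product satisfies $d_G(h_i,\mathrm{id}_G)\le d_G(g,\mathrm{id}_G)+\sum_{j\le i}\min(\cdots)\le4Q$, so $|\kappa(h_i)|\le Q^{O(1)}$ by the second ingredient, and likewise $\min\big(|\kappa(w_i)|,|\kappa(w_i^{-1})|\big)\le2Q+1$ forces $|\kappa(w_i)|\le Q^{O(1)}$ via $\iota$. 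Telescoping, and using $\kappa(h_i)=\Phi(\kappa(h_{i-1}),\kappa(w_i))$ while $\kappa(h_{i-1})=\Phi(\kappa(h_{i-1}),0)$,
\begin{align*}
|\kappa(g)-\kappa(h)|&\le\sum_{i=1}^n\big|\kappa(h_{i-1})-\kappa(h_i)\big|=\sum_{i=1}^n\big|\Phi(\kappa(h_{i-1}),0)-\Phi(\kappa(h_{i-1}),\kappa(w_i))\big|\\
&\le Q^{O(1)}\sum_{i=1}^n|\kappa(w_i)|\le Q^{O(1)}\sum_{i=1}^n\min\big(|\kappa(w_i)|,|\kappa(w_i^{-1})|\big)\le Q^{O(1)}\big(d_G(g,h)+\varepsilon\big),
\end{align*}
where the second inequality uses the Lipschitz bound for $\Phi$ on a ball of radius $Q^{O(1)}$ and the third applies the bound for $\iota$ to those $i$ with $|\kappa(w_i^{-1})|<|\kappa(w_i)|$. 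Letting $\varepsilon\to0$ gives $|\kappa(g)-\kappa(h)|\le Q^{O(1)}d_G(g,h)$, as required. Apart from the a priori bound of the second paragraph — whose proof rests on organizing the iterated Baker--Campbell--Hausdorff expansion by bracket length, the only genuinely delicate point — every step is a routine manipulation of the defining chain for $d_G$ and of the polynomial identities furnished by the first ingredient; in the case actually used in this paper, where $G$ is the Heisenberg group and $s=2$, the second ingredient can alternatively be checked by a short explicit computation in the coordinates \eqref{5.MC}.
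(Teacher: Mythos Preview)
Your proposal is correct and follows precisely the route of \cite[Appendix~A]{GT12a}. Note, however, that the paper does not supply its own proof of this lemma at all: it simply states the result as ``a weak version of \cite[Lemma~A.4]{GT12a}'' and invokes it as a black box. So there is nothing to compare against beyond the cited source, and your argument is exactly a reconstruction of that source's argument --- the polynomiality of $\Phi$ and $\iota$ via Baker--Campbell--Hausdorff with $Q$-rational structure constants, the a~priori coordinate bound obtained by organizing the iterated BCH expansion by bracket length $\le s$, and the telescoping over a near-optimal chain.

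One small remark: the passage bounding $|\log g|$ via the iterated BCH expansion is the only place where the bookkeeping is genuinely delicate, and your sketch (``grouping by $k$ and summing a $k$-fold product over all ordered index $k$-tuples reconstitutes $(\sum_i|\log w_i|)^k$'') compresses a fair amount. For general step $s$ one must verify that the universal coefficients arising from iterating BCH across $n$ factors remain $O_s(1)$ independently of $n$; this is true and standard, but it is the step most in need of a careful reference or an explicit inductive check. Since the paper only ever applies the lemma with $G$ the Heisenberg group and $Q=4$, your closing parenthetical --- that for $s=2$ the a~priori bound can be verified by a short direct computation in the coordinates \eqref{5.MC} --- is the cleanest way to close this and is entirely adequate for the paper's purposes. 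Indeed, for $s=2$ one has $\log(e^{X_1}\cdots e^{X_n})=\sum_i X_i+\tfrac12\sum_{i<j}[X_i,X_j]$, from which the bound $|\log g|\le \sum_i|X_i|+\tfrac12(mQ)\big(\sum_i|X_i|\big)^2$ is immediate.
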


\section{Appendix II: the distality of $(\TNil,T)$}

The purpose of this section is to establish the following theorem that implies the distality of the flow $(\TNil,T)$.

\begin{THM}\label{Thm/Distal}
Let $\mathbb{T}$ be the unit circle and $\Nil$ the $3$-dimensional Heisenberg nilmanifold.
Let $\alpha \in [0,1)$ and let $\varphi_1, \varphi_2, \psi$ be $C^{\infty}$-smooth periodic functions with period $1$.
Denote by $S$ the skew product
\begin{equation}\label{8.SForm}
S: (t,\Gamma g) \mapsto \lb t+\alpha, \Gamma g \bmat{\varphi_1(t)}{\varphi_2(t)}{\psi(t)}\rb.
\end{equation}
Then the flow $(\TNil, S)$ is distal.
\end{THM}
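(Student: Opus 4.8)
The plan is to realise $(\TNil,S)$ as a central (hence isometric) extension of a skew product on the $3$-torus and to treat the base and fibre directions separately; throughout we may work with the metric $d$ of \eqref{6.DefMetric}, since distality does not depend on the choice of a compatible metric. A one-line induction gives the iterates
\[
S^n(t,\Gamma g)=\lb t+n\alpha,\ \Gamma g\, b_n(t)\rb,\qquad b_n(t)\coloneqq b(t)\,b(t+\alpha)\cdots b(t+(n-1)\alpha),
\]
where $b(s)=\bmat{\varphi_1(s)}{\varphi_2(s)}{\psi(s)}$; the feature to retain is that $b_n(t)\in G$ depends only on $t$, so that two points sharing the same $\mathbb{T}$-coordinate are right-translated by the same group element. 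I would then introduce the factor map $\pi\colon\TNil\to\mathbb{T}^3$, $\pi(t,\Gamma\mat{x}{y}{z})=(t,x,y)$, which is well defined because the $\Gamma$-action changes $x,y$ only by integers, and which intertwines $S$ with $S_2\colon(t,x,y)\mapsto(t+\alpha,x+\varphi_1(t),y+\varphi_2(t))$.

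The base system $(\mathbb{T}^3,S_2)$ is distal by a direct computation: $S_2^n(t,x,y)=(t+n\alpha,x+A_n(t),y+B_n(t))$ with $A_n(t)=\sum_{l=0}^{n-1}\varphi_1(t+l\alpha)$ and $B_n(t)=\sum_{l=0}^{n-1}\varphi_2(t+l\alpha)$, so if two points have distinct first coordinates these stay at the positive distance $\|t-t'\|$ forever, while if the first coordinates agree the sums $A_n,B_n$ cancel and the two orbits differ by the constant nonzero vector $(0,x-x',y-y')$. The first of two cases is then immediate: for $w\ne w'$ with $\pi(w)\ne\pi(w')$, distality of $(\mathbb{T}^3,S_2)$ gives $\inf_{n\ge0}d_{\mathbb{T}^3}(S_2^n\pi w,S_2^n\pi w')=\delta>0$, and the uniform continuity of $\pi$ (both spaces are compact) upgrades this to $\inf_{n\ge0}d(S^nw,S^nw')>0$.

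The remaining, and main, case is $\pi(w)=\pi(w')$ with $w\ne w'$. Using the $\Gamma$-action one may normalise the representatives so that $g=\mat{x_0}{y_0}{z}$ and $g'=\mat{x_0}{y_0}{z'}$ with $x_0,y_0\in[0,1)$; then $w\ne w'$ forces $z\not\equiv z'\pmod 1$, and a short matrix computation yields $g'=c\,g$ with $c=\mat{0}{0}{z'-z}$, a central element of $G$ that does not lie in $\Gamma$. Because $c$ is central, left multiplication by $c$ descends to a homeomorphism $L_c$ of $\Nil$, and since here $t=t'$ we obtain, for every $n$,
\[
S^nw=\lb t+n\alpha,\ \Gamma g\,b_n(t)\rb,\qquad S^nw'=\lb t+n\alpha,\ L_c\bigl(\Gamma g\,b_n(t)\bigr)\rb,
\]
whence $d(S^nw,S^nw')\ge\min_{x\in\Nil}d_{\Nil}(x,L_cx)$. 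Finally $L_c$ has no fixed point (one would force $c\in\Gamma$), so by compactness of $\Nil$ and continuity of $d_{\Nil}$ the right-hand side is a strictly positive constant $\delta_c$, and therefore $\inf_{n\ge0}d(S^nw,S^nw')\ge\delta_c>0$. Combining the two cases shows $(\TNil,S)$ is distal.

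I expect the fibre case to be where the one genuinely nontrivial point lies: everything else reduces through the factor $\pi$ to the elementary distality of the skew product $(\mathbb{T}^3,S_2)$, whereas the real input is that the Heisenberg nilmanifold is a twisted circle bundle over $\mathbb{T}^2$ whose vertical rotations $L_c$ act freely, so that the $n$-independent height gap $z'-z$ can never be contracted. This is precisely the step that exploits the nilmanifold structure — and, tellingly, the step that goes through with no relation imposed between $\varphi_1$ and $\varphi_2$.
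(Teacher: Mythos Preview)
Your proof is correct and takes a genuinely different, more structural route than the paper's. The paper argues by contradiction: assuming $d(S^{n_k}w,S^{n_k}w')\to 0$ along some sequence, it first forces $t_1=t_2$ from the base rotation, then invokes Lemma~\ref{Q-equivalent} (the Green--Tao comparison between $d_G$ and Mal'cev coordinates) to pass from $d_G(r_kg_1h_k,s_kg_2h_k)\to 0$ to $|\kappa(r_kg_1h_k)-\kappa(s_kg_2h_k)|\to 0$, and finishes with an entry-by-entry integrality argument showing $g_2=\gamma g_1$ for some $\gamma\in\Gamma$. You instead exhibit $(\TNil,S)$ as a two-step tower: the distal factor $(\mathbb{T}^3,S_2)$ disposes of points with $\pi(w)\ne\pi(w')$, while for $\pi(w)=\pi(w')$ you observe that the two orbits are related for \emph{every} $n$ by the same central translation $L_c$, which is fixed-point-free on the compact space $\Nil$ and hence uniformly displacing. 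Your argument is cleaner---no appeal to Lemma~\ref{Q-equivalent}, no coordinate bookkeeping---and makes the mechanism transparent: $(\TNil,S)$ is an isometric extension of a distal system, which is exactly why (as you point out) no relation between $\varphi_1$ and $\varphi_2$ is required. The paper's computation, though more hands-on, has the compensating virtue of being entirely explicit at the level of matrix entries.
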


\begin{proof}
Recall that the metric on $\TNil$ is given by (\ref{6.DefMetric}). Assume on the contrary that $(t_1,\Gamma g_1) \neq (t_2, \Gamma g_2) \in \TNil$ but
\begin{equation*}
\lim_{k \rightarrow \infty }d(S^{n_k}(t_1,\Gamma g_1),S^{n_k}(t_2 ,\Gamma g_2)) = 0
\end{equation*}
for some sequence $n_k \to \infty$. Then we must have $t_1=t_2$ since $S$ performs as a rotation on the first component, which preserves the metric on $\mathbb{T}^1$. Therefore, the distance of the second components of $S^{n_k}(t_1,\Gamma g_1)$ and $S^{n_k}(t_2 ,\Gamma g_2)$ tends to zero. Since now $t_1=t_2$, by the definition of $S$, we deduce that
there exists a sequence  $\{h_k\}_{k\geq 1}$ in $G$ such that
\begin{equation*}
\lim_{k \rightarrow \infty} d_{\Nil}(\Gamma g_1 h_k, \Gamma g_2 h_k) = 0.
\end{equation*}
In other words, there exist $r_k,s_k \in \Gamma$ such that
\begin{equation}\label{d_Gtend0}
\lim_{k \rightarrow \infty} d_{G}(r_k g_1 h_k, s_k g_2 h_k) = 0.
\end{equation}
Moreover, since $d_{G}$ is left-invariant, we can assume without loss of generality that each component of $r_k g_1 h_k$ lies in $[0,1)$. Therefore, by \eqref{5.MC}, we have
\begin{equation*}
|\kappa(r_k g_1 h_k)| \leq 2
\end{equation*}
and hence
\begin{equation}\label{d_G<2}
d_G(r_k g_1 h_k, \mathrm{id}_G) \leq 2
\end{equation}
by the definition of $d_G$. Hnece by (\ref{d_G<2}) and (\ref{d_Gtend0}), when $k$ is sufficiently large, we have
\begin{equation*}
d_G(s_k g_2 h_k, \mathrm{id}_G) \leq d_G(r_k g_1 h_k, \mathrm{id}_G) + d_{G}(r_k g_1 h_k, s_k g_2 h_k) \leq 4.
\end{equation*}
Applying Lemma \ref{Q-equivalent} with $Q=4$, we obtain that
\begin{equation}\label{kappatend0}
\lim_{k \rightarrow \infty} |\kappa(r_k g_1 h_k)- \kappa(s_k g_2 h_k)| = 0.
\end{equation}
From now on, for $g \in G$, we use $g^1,g^2,g^3$ to denote its $x,y,z$-component, respectively. A simple calculation shows that
\begin{equation*}
r_k g_1 h_k = \bmat{r_k^1 + g_1^1 + h_k^1}{r_k^2 + g_1^2 +h_k ^2}{r_k^3 + g_1^3 +h_k ^3 + g_1^1r_k^2  + h_k^1r_k^2 + g_1^2 h_k ^1}
\end{equation*}
and
\begin{equation*}
s_k g_2 h_k = \bmat{s_k^1 + g_2^1 + h_k^1}{s_k^2 + g_2^2 +h_k ^2}{s_k^3 + g_2^3 +h_k ^3 + g_2^1s_k^2 + h_k^1s_k^2 + g_2^2 h_k ^1}.
\end{equation*}
Therefore, by considering the first component of $\kappa(r_k g_1 h_k)$ and $\kappa(s_k g_2 h_k)$, we deduce from \eqref{kappatend0} that
\begin{equation*}
\lim_{k \rightarrow \infty } (r_k^1 -s_k^1)= g_2^1 -g_1^1.
\end{equation*}
But $r_k^1 - s_k^1 \in \mathbb{Z}$, so for $k$ sufficiently large, $r_k^1 - s_k^1$ is an integral constant $a$ and we have $a=g_2^1 -g_1^1$. Similarly, for $k$ sufficiently large, $r_k^2 - s_k^2$ is an integral constant $b$ satisfying $b = g_2^2 - g_1^2$. Now since for large $k$, the $x,y$-components of $r_k g_1 h_k$ and $s_k g_2 h_k$ are equal, by \eqref{kappatend0} and the definition of $\kappa$, the difference between their $z$-components tends to zero as well. So for $k$ sufficiently large we have
\begin{eqnarray*}
&&(r_k^3 + g_1^3 +h_k ^3 + g_1^1r_k^2  + h_k^1r_k^2 + g_1^2 h_k ^1)
-(s_k^3 + g_2^3 +h_k ^3 + g_2^1s_k^2 + h_k^1s_k^2 + g_2^2 h_k ^1) \\
&&=(r_k^3-s_k^3)+(g_1^3 - g_2^3)+g_1^1r_k^2 - g_2^1 s_k^2 \\
&&=(r_k^3-s_k^3)+(g_1^3 - g_2^3)+g_1^1r_k^2 - (g_1^1 + a) s_k^2 \\
&&=(r_k^3-s_k^3)+(g_1^3 - g_2^3)+g_1^1b -  a s_k^2
\end{eqnarray*}
which approaches $0$ as $k\to\infty$.
Again, since $r_k^3-s_k^3-a s_k^2 \in \mathbb{Z}$, there exists an integral constant $c$ such that $c=r_k^3-s_k^3-a s_k^2$ for large $k$ and $c$ satisfies $c= g_2^3 - g_1^3 - bg_1^1$. As a consequence, we have found $a,b,c \in \mathbb{Z}$ such that
\begin{equation*}
\begin{cases}
g_2^1 = g_1^1 +a, \\
g_2^2 = g_1^2 +b, \\
g_2^3 = g_1^3 + bg_1^1 + c,
\end{cases}
\end{equation*}
which implies
\begin{equation*}
\Gamma g_2=\bmat{g^1_2}{g^2_2}{g^3_2}= \Gamma \bmat{a}{b}{c}\bmat{g^1_1}{g^2_1}{g^3_1} =\Gamma g_1.
\end{equation*}
This is a contradiction, and the theorem is proved.
\end{proof}

\medskip
\noindent
{\bf Acknowledgements.}
{The first author  is partially supported by the National
Science Foundation of China under Grants 11431012, 11731003.} The second and third authors are partially supported by the National
Science Foundation of China under Grant 11531008, the Ministry of Education of China
under Grant IRT16R43, and the Taishan Scholar Project of Shandong Province.

\end{document}